\documentclass[11pt]{article}

\pdftrailerid{}

\usepackage[letterpaper,margin=1in]{geometry}
\usepackage[T1]{fontenc}
\usepackage{lmodern}
\usepackage{microtype}
\usepackage{mathtools,amssymb,amsthm}
\usepackage{etoolbox}
\usepackage{aliascnt}
\usepackage{enumitem}
\usepackage{array,longtable}
\usepackage{fancyhdr}
\usepackage{tikz}
\usetikzlibrary{arrows.meta}
\usepackage[dvipsnames]{xcolor}
\usepackage[normalem]{ulem}
\definecolor{InternalLink}{HTML}{1E63A8}
\definecolor{ExternalLink}{HTML}{167C78}
\newif\ifshowglossary
\showglossaryfalse
\usepackage[
  backend=biber,
  style=alphabetic,
  sorting=nyt,
  maxbibnames=99,
  giveninits=true,
  doi=true,
  eprint=true,
  url=true,
  isbn=true
]{biblatex}
\usepackage[
  colorlinks=true,
  linkcolor=InternalLink,
  citecolor=InternalLink,
  urlcolor=ExternalLink,
  pdfauthor={Jin-Cheng Guu},
  pdftitle={A Disproof of Santharoubane's Conjecture on Presentations of Generic Skein Algebras}
]{hyperref}
\usepackage{zref-user}
\usepackage[nameinlink,noabbrev]{cleveref}

\let\mathscopeoriginalhyperlink\hyperlink
\ExplSyntaxOn
\RenewDocumentCommand{\hyperlink}{mm}
  {
    \tl_if_in:nnTF {#1} {mathscope.}
      {
        \ifshowglossary
          \begingroup
          \hypersetup{linkcolor=black!78}
          \mathscopeoriginalhyperlink{#1}{#2}
          \endgroup
        \else
          #2
        \fi
      }
      {\mathscopeoriginalhyperlink{#1}{#2}}
  }
\ExplSyntaxOff

\newif\ifshowrevisions
\showrevisionsfalse
\newcommand{\revadd}[1]{%
  \ifshowrevisions\textcolor{red!80!black}{#1}\else#1\fi}
\newcommand{\revdel}[1]{%
  \ifshowrevisions\textcolor{gray}{\sout{#1}}\fi}
\newcommand{\revdelnostrike}[1]{%
  \ifshowrevisions\textcolor{gray}{#1}\fi}
\newenvironment{revaddblock}
  {\ifshowrevisions\color{red!80!black}\fi}
  {}

\newtheorem{maintheorem}{Theorem}

\newaliascnt{lemma}{theorem}
\newtheorem{lemma}[lemma]{Lemma}
\aliascntresetthe{lemma}
\newaliascnt{fact}{theorem}
\newtheorem{fact}[fact]{Fact}
\aliascntresetthe{fact}
\newtheorem*{localfact}{Fact}
\newaliascnt{conjecture}{theorem}
\newtheorem{conjecture}[conjecture]{Conjecture}
\aliascntresetthe{conjecture}
\theoremstyle{remark}
\newaliascnt{remark}{theorem}
\newtheorem{remark}[remark]{Remark}
\aliascntresetthe{remark}
\theoremstyle{definition}
\newaliascnt{definition}{theorem}
\newtheorem{definition}[definition]{Definition}
\aliascntresetthe{definition}
\newtheorem*{localdefinition}{Definition}

\newcommand{\statementendmark}{\hfill\textcolor{black!55}{\(\lozenge\)}}
\AtEndEnvironment{theorem}{\statementendmark}
\AtEndEnvironment{maintheorem}{\statementendmark}
\AtEndEnvironment{lemma}{\statementendmark}
\AtEndEnvironment{fact}{\statementendmark}
\AtEndEnvironment{localfact}{\statementendmark}
\AtEndEnvironment{conjecture}{\statementendmark}
\AtEndEnvironment{remark}{\statementendmark}
\AtEndEnvironment{definition}{\statementendmark}
\AtEndEnvironment{localdefinition}{\statementendmark}
\crefname{theorem}{Theorem}{Theorems}
\Crefname{theorem}{Theorem}{Theorems}
\crefname{maintheorem}{Theorem}{Theorems}
\Crefname{maintheorem}{Theorem}{Theorems}
\crefname{lemma}{Lemma}{Lemmas}
\Crefname{lemma}{Lemma}{Lemmas}
\crefname{fact}{Fact}{Facts}
\Crefname{fact}{Fact}{Facts}
\crefname{conjecture}{Conjecture}{Conjectures}
\Crefname{conjecture}{Conjecture}{Conjectures}
\crefname{remark}{Remark}{Remarks}
\Crefname{remark}{Remark}{Remarks}
\crefname{definition}{Definition}{Definitions}
\Crefname{definition}{Definition}{Definitions}

\title{\bfseries A Disproof of Santharoubane's Conjecture\\
on Presentations of Generic Skein Algebras}
\author{Jin-Cheng Guu\\University of Alberta}
\date{}

\providecommand{\buildmetadata}{Build metadata unavailable.}
\fancypagestyle{plain}{%
  \fancyhf{}
  \fancyfoot[L]{\textcolor{gray!25}{\fontsize{6}{7}\selectfont\buildmetadata}}
  \fancyfoot[R]{\textcolor{gray!65}{\scriptsize\thepage}}
  
  }

\begin{document}
\renewcommand{\buildmetadata}{Compiled 2026-08-08 20:53:09 MDT; commit 471ecaaeb985 (clean).}
\maketitle
\vspace{-2.5em}

\begin{abstract}
Let \(\Sigma\) be a compact connected oriented surface of genus at least \(3\)
with at most one boundary component.
Santharoubane associated to certain presentations of the mapping class
group modulo its center a finitely presented algebra equipped with a
canonical surjection onto the generic Kauffman bracket skein algebra of
\(\Sigma\), and conjectured that a suitable choice yields an algebra
isomorphic to the skein algebra.
We show that every algebra arising from this construction admits an
augmentation character, whereas the generic skein algebra of \(\Sigma\)
admits no unital character over \(\mathbb Q(A)\).
The latter obstruction follows from the intersection-one Dehn-twist
identity together with a four-holed-sphere skein relation.
Consequently, the conjectured isomorphism does not hold as stated.
\end{abstract}
\begin{center}
  \footnotesize
  \textbf{Contents}\quad
  \hyperref[sec:introduction]{\ref*{sec:introduction}\ Introduction
  (p.~\pageref*{sec:introduction})}
  \enspace\textperiodcentered\enspace
  \hyperref[sec:preliminaries]{\ref*{sec:preliminaries}\ Preliminaries
  (p.~\pageref*{sec:preliminaries})}
  \enspace\textperiodcentered\enspace
  \hyperref[sec:candidate-algebra]{\ref*{sec:candidate-algebra}\ Candidate
  Algebra (p.~\pageref*{sec:candidate-algebra})}
  \enspace\textperiodcentered\enspace
  \hyperref[sec:main-result]{\ref*{sec:main-result}\ Main Result
  (p.~\pageref*{sec:main-result})}
\end{center}
\vspace{0.25em}
\section{Introduction}\label{sec:introduction}
Kauffman bracket skein algebras form a topological meeting point for
\(\mathrm{SL}_2\)-character varieties and their quantization,
\(U_q(\mathfrak{sl}_2)\) representation theory, \(q\)-special functions,
topological quantum field theory, and string topology.
The skein algebra of a surface is defined topologically from links in its
thickening modulo local relations.
This paper concerns its generic form, in which the skein parameter remains
indeterminate; the integral, generic, and specialized conventions are fixed in
\cref{def:skein-algebra}.
\revdel{At the classical specialization \(A=-1\), the Kauffman bracket
construction recovers the corresponding \(\mathrm{SL}_2\)-character algebra
\parencite{PrzytyckiSikora2000}, while its formal deformation gives a
quantization of the Goldman Poisson structure
\parencite{BullockFrohmanKaniaBartoszynska1999}.}
\revadd{After specializing the integral skein algebra at \(A=-1\) and
extending scalars to \(\mathbb C\), one recovers the coordinate ring of the
\(\mathrm{SL}_2(\mathbb C)\)-character variety
\parencite{PrzytyckiSikora2000}.
The formal skein deformation quantizes the Goldman Poisson bracket
\parencite{BullockFrohmanKaniaBartoszynska1999}.}
The Goldman bracket also enters Chas--Sullivan string topology: for closed
hyperbolic surfaces, Vaintrob describes the string-topology BV operations in
terms of it \parencite{ChasSullivan1999,Vaintrob2007}.
Tham relates skein-theoretic categories to boundary values of the extended
Crane--Yetter TQFT through a skein/state-sum correspondence
\parencite{CraneKauffmanYetter1997,Tham2021}.
Independently, Cooke proves excision for skein categories and identifies the
resulting theory with factorisation homology
\parencite{Cooke2023Excision}.
\par
We briefly survey the relevant presentations and representation-theoretic
models genus by genus.
\begin{itemize}[leftmargin=5em,labelsep=0.75em,itemsep=0.4em,topsep=0.4em,
  before=\interlinepenalty10000\relax]
\item[\(\boldsymbol{g=0}\).]
Cooke and Lacabanne identify skein algebras of punctured spheres (viewed here
as holed spheres) with higher-rank Askey--Wilson algebras; the four-holed
sphere is the rank-one case
\parencite{BullockPrzytycki2000,CookeLacabanne2026}.
The associated Askey--Wilson and \(q\)-Racah polynomials lie at the top of the
\(q\)-Askey scheme, whose specializations and \(q\to1\) limits include familiar
classical families such as Jacobi, Laguerre, and Hermite polynomials
\parencite{AskeyWilson1985,KoekoekLeskySwarttouw2010}.
\revdel{Thus the topology of curves on holed spheres organizes the
Askey--Wilson branch of \(q\)-special-function theory; both skein recoupling
and Askey--Wilson algebras encode \(U_q(\mathfrak{sl}_2)\) tensor-product data
\parencite{CookeLacabanne2026}.}
\revadd{Thus, in genus zero, skein algebras give a topological interpretation
of the Askey--Wilson part of \(q\)-special-function theory
\parencite{CookeLacabanne2026}.}
\item[\(\boldsymbol{g=1}\).]
\revdel{Choosing three slope curves \(y_1,y_2,y_3\), Bullock and Przytycki
present the one-holed-torus skein algebra by the three cyclic relations}
\revadd{Choose curves \(y_1,y_2\) that intersect once, and label \(y_3\)
according to the smoothing convention of
\textcite[Equation~(2.1)]{BullockPrzytycki2000}.
Their Theorem~2.1 presents the one-holed-torus skein algebra by the three
cyclic relations}
\[
  \revdel{[y_i,y_{i+1}]_s=(s^2-s^{-2})y_{i+2}}
  \revadd{[y_i,y_{i+1}]_A=(A^2-A^{-2})y_{i+2}}
  \qquad (i\in\mathbb Z/3\mathbb Z),
\]
and capping the boundary adds a single cubic relation to present the closed
torus \parencite{BullockPrzytycki2000}.
Frohman and Gelca identify the latter algebra with the symmetric part of a
noncommutative torus \parencite{FrohmanGelca2000}.
\revdel{Macdonald theory and double affine Hecke algebras provide the
algebraic background for a second parameter, and related \((q,t)\)-structures
appear in genus-one skein theory and refined Chern--Simons theory
\parencite{EtingofKirillov1994,Cherednik1995,Hikami2019,Samuelson2019,AganagicShakirov2012}.}
\revadd{Macdonald theory and double affine Hecke algebras provide the algebraic
background for a second parameter
\parencite{EtingofKirillov1994,Cherednik1995}.
Related \((q,t)\)-structures appear in genus-one skein theory and refined
Chern--Simons theory
\parencite{Hikami2019,Samuelson2019,AganagicShakirov2012}.}
\item[\(\boldsymbol{g=2}\).]
Arthamonov and Shakirov introduce a two-parameter genus-two analogue of the
spherical \(A_1\) double affine Hecke algebra
\parencite{ArthamonovShakirov2019}.
Cooke and Samuelson show that the closed genus-two skein algebra is generated
by the five Humphries curves \(A_1,A_2,A_3,B_{12},B_{23}\); adjoining the
redundant curve \(B_{13}\) gives a symmetric six-loop calculus
\parencite{CookeSamuelson2021}.
They compute the actions of these loops on the genus-two handlebody skein
module in both theta and dumbbell bases, prove that this module is irreducible,
and describe its restrictions to embedded one-holed-torus and four-holed-sphere
subalgebras using DAHA polynomial representations
\parencite{CookeSamuelson2021}.
\revdel{Cooke and Samuelson further identify the skein algebra, in their
parameter convention, with the diagonal specialization \(q=t=s^4\) of the
genus-two spherical DAHA
\parencite{CookeSamuelson2021,ArthamonovShakirov2019}.}
\revadd{In the convention of Cooke and Samuelson, their skein parameter
\(s\) is our \(A\).
They identify the skein algebra with the diagonal specialization
\(q=t=A^4\) of the genus-two spherical DAHA
\parencite{CookeSamuelson2021,ArthamonovShakirov2019}.}
\revdel{Arthamonov subsequently proves that this family is flat and has the
same monomial basis as its diagonal specialization, making the independent
\(t\)-direction a genuine flat deformation of the genus-two skein algebra
\parencite{Arthamonov2025}.}
\revadd{Arthamonov later proves that this family is flat.
Arthamonov also proves that it has the same monomial basis as its diagonal
specialization.
Thus the independent \(t\)-direction gives a genuine flat deformation of the
genus-two skein algebra \parencite{Arthamonov2025}.}
\revdel{It nevertheless remains a mysterious \((q,t)\)-deformation: away
from the diagonal, no comparably transparent intrinsic topological meaning of
the independent \(t\)-direction is known, and the combinatorial content of
the genus-two DAHA is likewise unclear.}
\revadd{From the skein-theoretic viewpoint, this \((q,t)\)-deformation remains
mysterious.
To the author's knowledge, no transparent intrinsic topological meaning of
the independent \(t\)-direction is known away from the diagonal \(q=t\).}
\item[\(\boldsymbol{g>2}\).]
\revdel{The author knows of no comparably compact finite presentation of the
generic Kauffman bracket skein algebra by globally meaningful curves.}
\revadd{To the author's knowledge, no comparably compact finite presentation
of the generic Kauffman bracket skein algebra by a small family of
topologically specified curves is known.}
A general presentation theorem is nonetheless available: Chen constructs
generators from a cutting graph and proves that the defining ideal is generated
by relations of degree at most six supported on small subsurfaces
\parencite{Chen2024}.
\revdel{This local and effective description does not by itself supply the
small, global topological presentation one would like to compare or deform
across genera.}
\revadd{However, Chen's presentation depends on a cutting graph.
Thus Chen gives a general and effective presentation, but its generators
depend on the chosen cutting graph.}
\revdel{Such a presentation would provide a natural starting point for
higher-genus \((q,t)\)-deformations, whose construction was the original
motivation for the present paper.}
\revadd{The original motivation for this paper was to construct higher-genus
\((q,t)\)-deformations.
A small presentation by topological curves would be a natural starting point.}
\end{itemize}
\par
\revdel{For the \(g>2\) case relevant here, Santharoubane's more general
mapping-class-group construction proposes a route toward such a presentation.}
\revadd{For surfaces of genus \(g>2\), Santharoubane proposed another
candidate presentation based on the mapping class group.}
\revdel{From a finite family of nonseparating simple closed curves that
pairwise intersect at most once and whose Dehn twists generate the mapping
class group, together with a presentation of its quotient by the center,
they construct a finitely presented candidate algebra with a canonical
surjection onto the skein algebra
\parencite[Theorem~1.1, Equation~(4), and
Corollary~1.2]{Santharoubane2024}.}
\revadd{The construction starts with a finite family of nonseparating simple
closed curves.
Any two curves in this family intersect at most once, and their Dehn twists
generate the mapping class group.
One also chooses a presentation of the mapping class group modulo its center.
From these data, Santharoubane constructs a finitely presented candidate
algebra and a canonical surjection onto the skein algebra
\parencite[Theorem~1.1, Equation~(3), and
Corollary~1.2]{Santharoubane2024}.}
\revdel{The conjecture was previously paraphrased as saying that the
canonical surjection is an isomorphism for some presentation datum.}
\revadd{After fixing an admissible curve family,
\textcite[Conjecture~1.3]{Santharoubane2024} asks whether there exists a
presentation of the central quotient for which the associated candidate
algebra is abstractly isomorphic to the skein algebra.}
Unfortunately, in \cref{thm:obstruction} we disprove this conjecture as stated
for surfaces of genus at least \(3\) with at most one boundary component.
\revdel{The obstruction is uniform and elementary: every candidate algebra
has an augmentation character, while the generic skein algebra has no unital
\(\mathbb Q(A)\)-algebra character.}
\revadd{The proof is based on the following observation.
Every candidate algebra has an augmentation character.
On the other hand, the generic skein algebra has no unital
\(\mathbb Q(A)\)-algebra character.}
\par
\section{Preliminaries and notation}\label{sec:preliminaries}

The following notation and conventions remain in force throughout the paper.

\begin{definition}[Coefficient Field and Character]
The symbol \(\mathbb Q\) denotes the field of rational numbers, \(A\) is an
indeterminate, and \(\mathbb Q(A)\) is the field of rational functions in
\(A\) with coefficients in \(\mathbb Q\).
A \emph{character} of a unital \(\mathbb Q(A)\)-algebra \(B\) means a
unital \(\mathbb Q(A)\)-algebra homomorphism \(B\to\mathbb Q(A)\).
\end{definition}

\begin{definition}[Surface and Its Mapping Class Group]
The symbol \(\Sigma\) denotes a compact oriented connected surface of genus
\(g\geq3\) with either no boundary or one boundary component.
A boundary component here is an actual boundary circle of a compact surface,
not a puncture or a removed marked point; no open boundary intervals or marked
boundary points are present.
Thus ``closed'' means \(\partial\Sigma=\varnothing\).
The mapping class group
\[
  \Gamma(\Sigma)=\pi_0\!\left(\operatorname{Homeo}^+(\Sigma,\partial\Sigma)\right)
\]
consists of isotopy classes of orientation-preserving homeomorphisms that fix
\(\partial\Sigma\) pointwise.
The isotopies are also relative to \(\partial\Sigma\): every boundary point
remains fixed throughout the isotopy.
\end{definition}

\begin{definition}[Central Quotient]
For a group \(G\), let \(Z(G)\) denote its center, and set
\[
  \overline{\Gamma(\Sigma)}
  =\Gamma(\Sigma)/Z\bigl(\Gamma(\Sigma)\bigr).
\]
\end{definition}

\begin{definition}[Geometric Intersection and Dehn Twists]
For simple closed curves \(\alpha,\beta\subset\Sigma\), let
\(\iota(\alpha,\beta)\) denote their geometric intersection number.
\revdel{The symbol \(t_\alpha\) denotes the Dehn twist about \(\alpha\).}
\begin{revaddblock}
The symbol \(t_\alpha\) denotes the right Dehn twist about \(\alpha\): in an
orientation-preserving annular chart \(S^1\times[0,1]\) around \(\alpha\), it
is represented by
\[
  t_\alpha(e^{i\theta},r)=(e^{i(\theta-2\pi r)},r),
\]
and it is the identity outside that annulus.
Mapping classes act on skeins by push-forward.
\end{revaddblock}
\end{definition}

\begin{definition}[Kauffman Bracket Skein Algebras and Commutator]
\label{def:skein-algebra}
Put \(\Lambda=\mathbb Z[A,A^{-1}]\), and let
\(\mathcal S_\Lambda(\Sigma)\) be the quotient of the free \(\Lambda\)-module
on isotopy classes of unoriented framed links in the interior of
\(\Sigma\times[0,1]\), including the empty link, by the local Kauffman bracket
relations
\[
  L_\times=A L_\infty+A^{-1}L_0,
  \qquad
  L\sqcup\bigcirc=-(A^2+A^{-2})L.
\]
\revadd{We use the Kauffman-triple convention of
\textcite[Figure~1]{Santharoubane2024} for
\(L_\times,L_\infty,L_0\).}
Here each relation is imposed inside a ball, and \(\bigcirc\) denotes a
zero-framed unknot bounding a disk in that ball.
Multiplication is induced by stacking the left factor above the right factor;
the empty link is the unit.
Via the canonical inclusion \(\Lambda\hookrightarrow\mathbb Q(A)\), the
\emph{generic Kauffman bracket skein algebra} is the scalar extension
\[
  \mathcal S(\Sigma,\mathbb Q(A))
  =\mathbb Q(A)\otimes_\Lambda\mathcal S_\Lambda(\Sigma).
\]
Here \(A\) remains indeterminate.
By contrast, regard \(\mathbb Q\) as a \(\Lambda\)-algebra through the
evaluation homomorphism \(A\mapsto-1\).
The specialization at \(A=-1\) is
\[
  \mathcal S_{-1}(\Sigma)
  =\mathbb Q\otimes_\Lambda\mathcal S_\Lambda(\Sigma)
  \qquad\text{for this \(\Lambda\)-algebra structure on \(\mathbb Q\)}.
\]
Thus the generic algebra studied in this paper and the algebra obtained by
specializing at \(A=-1\) are distinct scalar extensions of the same
Laurent-polynomial skein algebra.
A simple closed curve \(\gamma\subset\Sigma\) is identified with
\(\gamma\times\{1/2\}\), equipped with its blackboard framing.
For \(X,Y\in\mathcal S(\Sigma,\mathbb Q(A))\), define their
\(A\)-commutator by
\begin{equation}\label{eq:q-commutator}
  [X,Y]_A=AXY-A^{-1}YX.
\end{equation}
\end{definition}
\section{The Santharoubane Candidate Algebra}\label{sec:candidate-algebra}
\label{sec:augmentation}

\begin{definition}[Admissible Presentation Datum]
\label{def:admissible-presentation-datum}
An \emph{admissible presentation datum} for Santharoubane's construction
consists of the following data.
First, choose a finite family
\({\protect\hypertarget{mathscope.occurrence.72}{\protect\hyperlink{mathscope.symbol.15fce35597a5eec1}{\{\gamma_i\}_{i\in I}}}\protect\zlabel{mathscope.occurrence.72}}\) satisfying the hypotheses of
\textcite[Theorem~1.1]{Santharoubane2024}: every \({\protect\hypertarget{mathscope.occurrence.73}{\protect\hyperlink{mathscope.symbol.33c46987b405e702}{\gamma_i}}\protect\zlabel{mathscope.occurrence.73}}\) is a
nonseparating simple closed curve, any two curves in the family intersect
at most once, and their Dehn twists generate the mapping class group.
Write \({\protect\hypertarget{mathscope.occurrence.74}{\protect\hyperlink{mathscope.symbol.d7ae2a30537d8b97}{I}}\protect\zlabel{mathscope.occurrence.74}}=\{1,\ldots,{\protect\hypertarget{mathscope.occurrence.75}{\protect\hyperlink{mathscope.symbol.7376975d9cb4a165}{N}}\protect\zlabel{mathscope.occurrence.75}}\}\), set
\({\protect\hypertarget{mathscope.occurrence.76}{\protect\hyperlink{mathscope.symbol.5e984aafe8446c78}{t_i}}\protect\zlabel{mathscope.occurrence.76}}={\protect\hypertarget{mathscope.occurrence.77}{\protect\hyperlink{mathscope.symbol.5e984aafe8446c78}{t_{\gamma_i}}}\protect\zlabel{mathscope.occurrence.77}}\),
\revadd{and use the same symbol \({\protect\hypertarget{mathscope.occurrence.78}{\protect\hyperlink{mathscope.symbol.5e984aafe8446c78}{t_i}}\protect\zlabel{mathscope.occurrence.78}}\) for
the image of \({\protect\hypertarget{mathscope.occurrence.79}{\protect\hyperlink{mathscope.symbol.5e984aafe8446c78}{t_{\gamma_i}}}\protect\zlabel{mathscope.occurrence.79}}\) in
\({\protect\hypertarget{mathscope.occurrence.80}{\protect\hyperlink{mathscope.symbol.98acdb437f3af557}{\overline{\Gamma(\Sigma)}}}\protect\zlabel{mathscope.occurrence.80}}\).}
Second, choose a
presentation, with \({\protect\hypertarget{mathscope.occurrence.81}{\protect\hyperlink{mathscope.symbol.7a6d7bbd508f0a75}{K}}\protect\zlabel{mathscope.occurrence.81}}\geq1\),
\[
  {\protect\hypertarget{mathscope.occurrence.82}{\protect\hyperlink{mathscope.symbol.98acdb437f3af557}{\overline{\Gamma(\Sigma)}}}\protect\zlabel{mathscope.occurrence.82}}
  =
  \left\langle
    {\protect\hypertarget{mathscope.occurrence.83}{\protect\hyperlink{mathscope.symbol.5e984aafe8446c78}{t_1}}\protect\zlabel{mathscope.occurrence.83}},\ldots,{\protect\hypertarget{mathscope.occurrence.84}{\protect\hyperlink{mathscope.symbol.5e984aafe8446c78}{t_N}}\protect\zlabel{mathscope.occurrence.84}}
    \,\middle|\,
    {\protect\hypertarget{mathscope.occurrence.85}{\protect\hyperlink{mathscope.symbol.e3524430777772be}{R_1}}\protect\zlabel{mathscope.occurrence.85}}({\protect\hypertarget{mathscope.occurrence.86}{\protect\hyperlink{mathscope.symbol.5e984aafe8446c78}{t_1}}\protect\zlabel{mathscope.occurrence.86}},\ldots,{\protect\hypertarget{mathscope.occurrence.87}{\protect\hyperlink{mathscope.symbol.5e984aafe8446c78}{t_N}}\protect\zlabel{mathscope.occurrence.87}})=\cdots=
    {\protect\hypertarget{mathscope.occurrence.88}{\protect\hyperlink{mathscope.symbol.e3524430777772be}{R_K}}\protect\zlabel{mathscope.occurrence.88}}({\protect\hypertarget{mathscope.occurrence.89}{\protect\hyperlink{mathscope.symbol.5e984aafe8446c78}{t_1}}\protect\zlabel{mathscope.occurrence.89}},\ldots,{\protect\hypertarget{mathscope.occurrence.90}{\protect\hyperlink{mathscope.symbol.5e984aafe8446c78}{t_N}}\protect\zlabel{mathscope.occurrence.90}})=1
  \right\rangle ,
\]
where each \({\protect\hypertarget{mathscope.occurrence.91}{\protect\hyperlink{mathscope.symbol.e3524430777772be}{R_\ell}}\protect\zlabel{mathscope.occurrence.91}}\) is a word in the letters
\({\protect\hypertarget{mathscope.occurrence.92}{\protect\hyperlink{mathscope.symbol.5e984aafe8446c78}{t_j}}\protect\zlabel{mathscope.occurrence.92}}^{\pm1}\).

We denote this complete admissible presentation datum by
\({\protect\hypertarget{mathscope.occurrence.93}{\protect\hyperlink{mathscope.symbol.92d3b3498b287ed3}{\mathcal P}}\protect\zlabel{mathscope.occurrence.93}}\).
\end{definition}

\begin{definition}[Free Algebra]
Put
\[
  {\protect\hypertarget{mathscope.occurrence.94}{\protect\hyperlink{mathscope.symbol.31f1be226466dba2}{F}}\protect\zlabel{mathscope.occurrence.94}}={\protect\hypertarget{mathscope.occurrence.95}{\protect\hyperlink{mathscope.symbol.793c1bdec545e306}{\mathbb{Q}(A)}}\protect\zlabel{mathscope.occurrence.95}}\langle {\protect\hypertarget{mathscope.occurrence.96}{\protect\hyperlink{mathscope.symbol.2235a5c7430f5467}{X_i}}\protect\zlabel{mathscope.occurrence.96}}:i\in {\protect\hypertarget{mathscope.occurrence.98}{\protect\hyperlink{mathscope.symbol.d7ae2a30537d8b97}{I}}\protect\zlabel{mathscope.occurrence.98}}\rangle
\]
for the free associative, noncommutative \({\protect\hypertarget{mathscope.occurrence.99}{\protect\hyperlink{mathscope.symbol.793c1bdec545e306}{\mathbb{Q}(A)}}\protect\zlabel{mathscope.occurrence.99}}\)-algebra on
the generators \({\protect\hypertarget{mathscope.occurrence.100}{\protect\hyperlink{mathscope.symbol.2235a5c7430f5467}{X_i}}\protect\zlabel{mathscope.occurrence.100}}\).
\end{definition}

\begin{definition}[Augmentation Ideal]
A word of positive length means a nonempty monomial
\({\protect\hypertarget{mathscope.occurrence.101}{\protect\hyperlink{mathscope.symbol.5881a3e9a377a803}{X_{i_1}\cdots X_{i_r}}}\protect\zlabel{mathscope.occurrence.101}}\) with
\(r\geq1\).
Let
\({\protect\hypertarget{mathscope.occurrence.103}{\protect\hyperlink{mathscope.symbol.9846e5ebdbb97960}{\mathfrak m}}\protect\zlabel{mathscope.occurrence.103}}\) be the \({\protect\hypertarget{mathscope.occurrence.104}{\protect\hyperlink{mathscope.symbol.793c1bdec545e306}{\mathbb{Q}(A)}}\protect\zlabel{mathscope.occurrence.104}}\)-span of
all such words.  
Equivalently, \({\protect\hypertarget{mathscope.occurrence.105}{\protect\hyperlink{mathscope.symbol.9846e5ebdbb97960}{\mathfrak m}}\protect\zlabel{mathscope.occurrence.105}}\) is the kernel of the constant-term
augmentation
\begin{equation}\label{eq:free-augmentation}
  {\protect\hypertarget{mathscope.occurrence.106}{\protect\hyperlink{mathscope.symbol.b03c1b8f4b4e4f8d}{\varepsilon_F}}\protect\zlabel{mathscope.occurrence.106}}\colon {\protect\hypertarget{mathscope.occurrence.107}{\protect\hyperlink{mathscope.symbol.31f1be226466dba2}{F}}\protect\zlabel{mathscope.occurrence.107}}\longrightarrow{\protect\hypertarget{mathscope.occurrence.108}{\protect\hyperlink{mathscope.symbol.793c1bdec545e306}{\mathbb{Q}(A)}}\protect\zlabel{mathscope.occurrence.108}},
  \qquad {\protect\hypertarget{mathscope.occurrence.109}{\protect\hyperlink{mathscope.symbol.b03c1b8f4b4e4f8d}{\varepsilon_F}}\protect\zlabel{mathscope.occurrence.109}}({\protect\hypertarget{mathscope.occurrence.110}{\protect\hyperlink{mathscope.symbol.2235a5c7430f5467}{X_i}}\protect\zlabel{mathscope.occurrence.110}})=0.
\end{equation}
\end{definition}

\begin{definition}[Signed Endomorphisms]
For every \(j\in I\) and \(\epsilon\in\{+1,-1\}\),
\revdel{\textcite[Equations~(2)--(3)]{Santharoubane2024} defines algebra
endomorphisms}
\revadd{\textcite[Equation~(2)]{Santharoubane2024} defines algebra
endomorphisms} \({\protect\hypertarget{mathscope.occurrence.112}{\protect\hyperlink{mathscope.symbol.85acf16e72781377}{T_{j,\epsilon}}}\protect\zlabel{mathscope.occurrence.112}}\) of
\({\protect\hypertarget{mathscope.occurrence.113}{\protect\hyperlink{mathscope.symbol.31f1be226466dba2}{F}}\protect\zlabel{mathscope.occurrence.113}}\) by
\begin{equation}\label{eq:santharoubane-action}
  {\protect\hypertarget{mathscope.occurrence.114}{\protect\hyperlink{mathscope.symbol.85acf16e72781377}{T_{j,\epsilon}}}\protect\zlabel{mathscope.occurrence.114}}({\protect\hypertarget{mathscope.occurrence.115}{\protect\hyperlink{mathscope.symbol.2235a5c7430f5467}{X_k}}\protect\zlabel{mathscope.occurrence.115}})=
  \begin{cases}
    {\protect\hypertarget{mathscope.occurrence.116}{\protect\hyperlink{mathscope.symbol.2235a5c7430f5467}{X_k}}\protect\zlabel{mathscope.occurrence.116}},
      & {\protect\hypertarget{mathscope.occurrence.117}{\protect\hyperlink{mathscope.symbol.48fe6ffe25e5deb2}{\iota}}\protect\zlabel{mathscope.occurrence.117}}({\protect\hypertarget{mathscope.occurrence.118}{\protect\hyperlink{mathscope.symbol.33c46987b405e702}{\gamma_j}}\protect\zlabel{mathscope.occurrence.118}},{\protect\hypertarget{mathscope.occurrence.119}{\protect\hyperlink{mathscope.symbol.33c46987b405e702}{\gamma_k}}\protect\zlabel{mathscope.occurrence.119}})=0,\\[2mm]
    \epsilon\bigl({\protect\hypertarget{mathscope.occurrence.121}{\protect\hyperlink{mathscope.symbol.a8c0b3ceba5c59dd}{A}}\protect\zlabel{mathscope.occurrence.121}}^\epsilon {\protect\hypertarget{mathscope.occurrence.123}{\protect\hyperlink{mathscope.symbol.2235a5c7430f5467}{X_j}}\protect\zlabel{mathscope.occurrence.123}}{\protect\hypertarget{mathscope.occurrence.124}{\protect\hyperlink{mathscope.symbol.2235a5c7430f5467}{X_k}}\protect\zlabel{mathscope.occurrence.124}}
      -{\protect\hypertarget{mathscope.occurrence.125}{\protect\hyperlink{mathscope.symbol.a8c0b3ceba5c59dd}{A}}\protect\zlabel{mathscope.occurrence.125}}^{-\epsilon}{\protect\hypertarget{mathscope.occurrence.127}{\protect\hyperlink{mathscope.symbol.2235a5c7430f5467}{X_k}}\protect\zlabel{mathscope.occurrence.127}}{\protect\hypertarget{mathscope.occurrence.128}{\protect\hyperlink{mathscope.symbol.2235a5c7430f5467}{X_j}}\protect\zlabel{mathscope.occurrence.128}}\bigr),
      & {\protect\hypertarget{mathscope.occurrence.129}{\protect\hyperlink{mathscope.symbol.48fe6ffe25e5deb2}{\iota}}\protect\zlabel{mathscope.occurrence.129}}({\protect\hypertarget{mathscope.occurrence.130}{\protect\hyperlink{mathscope.symbol.33c46987b405e702}{\gamma_j}}\protect\zlabel{mathscope.occurrence.130}},{\protect\hypertarget{mathscope.occurrence.131}{\protect\hyperlink{mathscope.symbol.33c46987b405e702}{\gamma_k}}\protect\zlabel{mathscope.occurrence.131}})=1.
  \end{cases}
\end{equation}
Write
\[
  {\protect\hypertarget{mathscope.occurrence.132}{\protect\hyperlink{mathscope.symbol.85acf16e72781377}{T_{j,+}}}\protect\zlabel{mathscope.occurrence.132}}
  :=\left.{\protect\hypertarget{mathscope.occurrence.133}{\protect\hyperlink{mathscope.symbol.85acf16e72781377}{T_{j,\epsilon}}}\protect\zlabel{mathscope.occurrence.133}}\right|_{\epsilon=+1},
  \qquad
  {\protect\hypertarget{mathscope.occurrence.135}{\protect\hyperlink{mathscope.symbol.85acf16e72781377}{T_{j,-}}}\protect\zlabel{mathscope.occurrence.135}}
  :=\left.{\protect\hypertarget{mathscope.occurrence.136}{\protect\hyperlink{mathscope.symbol.85acf16e72781377}{T_{j,\epsilon}}}\protect\zlabel{mathscope.occurrence.136}}\right|_{\epsilon=-1},
  \qquad
  {\protect\hypertarget{mathscope.occurrence.138}{\protect\hyperlink{mathscope.symbol.85acf16e72781377}{T_j}}\protect\zlabel{mathscope.occurrence.138}}:={\protect\hypertarget{mathscope.occurrence.139}{\protect\hyperlink{mathscope.symbol.85acf16e72781377}{T_{j,+}}}\protect\zlabel{mathscope.occurrence.139}}.
\]
\end{definition}

\begin{definition}[Signed Evaluation of Relator Words]
\({\protect\hypertarget{mathscope.occurrence.140}{\protect\hyperlink{mathscope.symbol.e3524430777772be}{R_\ell}}\protect\zlabel{mathscope.occurrence.140}}({\protect\hypertarget{mathscope.occurrence.141}{\protect\hyperlink{mathscope.symbol.85acf16e72781377}{T_1}}\protect\zlabel{mathscope.occurrence.141}},\ldots,
{\protect\hypertarget{mathscope.occurrence.142}{\protect\hyperlink{mathscope.symbol.85acf16e72781377}{T_N}}\protect\zlabel{mathscope.occurrence.142}})\) means signed substitution: each
\({\protect\hypertarget{mathscope.occurrence.143}{\protect\hyperlink{mathscope.symbol.5e984aafe8446c78}{t_j}}\protect\zlabel{mathscope.occurrence.143}}\) is replaced by
\({\protect\hypertarget{mathscope.occurrence.144}{\protect\hyperlink{mathscope.symbol.85acf16e72781377}{T_{j,+}}}\protect\zlabel{mathscope.occurrence.144}}\), and each
\({\protect\hypertarget{mathscope.occurrence.145}{\protect\hyperlink{mathscope.symbol.5e984aafe8446c78}{t_j}}\protect\zlabel{mathscope.occurrence.145}}^{-1}\) is replaced by
\({\protect\hypertarget{mathscope.occurrence.146}{\protect\hyperlink{mathscope.symbol.85acf16e72781377}{T_{j,-}}}\protect\zlabel{mathscope.occurrence.146}}\).
\revadd{The resulting operator word is read as ordinary composition of
endomorphisms, so its rightmost factor acts first.}
\end{definition}

\begin{definition}[Relation Ideal]
For an admissible presentation datum
\({\protect\hypertarget{mathscope.occurrence.147}{\protect\hyperlink{mathscope.symbol.92d3b3498b287ed3}{\mathcal P}}\protect\zlabel{mathscope.occurrence.147}}\), let
\({\protect\hypertarget{mathscope.occurrence.148}{\protect\hyperlink{mathscope.symbol.4bda42526902b34c}{R_{\mathcal P}}}\protect\zlabel{mathscope.occurrence.148}}\subset{\protect\hypertarget{mathscope.occurrence.149}{\protect\hyperlink{mathscope.symbol.31f1be226466dba2}{F}}\protect\zlabel{mathscope.occurrence.149}}\) be
\revdel{the smallest two-sided ideal containing the three families below
and stable under every signed endomorphism.}
\revadd{the ordinary two-sided ideal generated by the following three
families.}
\revadd{Santharoubane calls this a ``bi-ideal'' in the display preceding
Equation~(3); we interpret this as an ordinary two-sided ideal, as confirmed
by the author in private communication
\parencite{SantharoubanePrivate2026}.}

First, it contains the mapping-class action defects
\[
  {\protect\hypertarget{mathscope.occurrence.150}{\protect\hyperlink{mathscope.symbol.e3524430777772be}{R_\ell}}\protect\zlabel{mathscope.occurrence.150}}({\protect\hypertarget{mathscope.occurrence.151}{\protect\hyperlink{mathscope.symbol.85acf16e72781377}{T_1}}\protect\zlabel{mathscope.occurrence.151}},\ldots,
  {\protect\hypertarget{mathscope.occurrence.152}{\protect\hyperlink{mathscope.symbol.85acf16e72781377}{T_N}}\protect\zlabel{mathscope.occurrence.152}})({\protect\hypertarget{mathscope.occurrence.153}{\protect\hyperlink{mathscope.symbol.2235a5c7430f5467}{X_i}}\protect\zlabel{mathscope.occurrence.153}})
  -{\protect\hypertarget{mathscope.occurrence.154}{\protect\hyperlink{mathscope.symbol.2235a5c7430f5467}{X_i}}\protect\zlabel{mathscope.occurrence.154}}
  \qquad
  (1\leq i\leq {\protect\hypertarget{mathscope.occurrence.156}{\protect\hyperlink{mathscope.symbol.7376975d9cb4a165}{N}}\protect\zlabel{mathscope.occurrence.156}},\
   1\leq \ell\leq {\protect\hypertarget{mathscope.occurrence.158}{\protect\hyperlink{mathscope.symbol.7a6d7bbd508f0a75}{K}}\protect\zlabel{mathscope.occurrence.158}}).
\]

Second, it contains the inverse defects
\[
  {\protect\hypertarget{mathscope.occurrence.159}{\protect\hyperlink{mathscope.symbol.85acf16e72781377}{T_{j,+}}}\protect\zlabel{mathscope.occurrence.159}}
  \bigl({\protect\hypertarget{mathscope.occurrence.160}{\protect\hyperlink{mathscope.symbol.85acf16e72781377}{T_{j,-}}}\protect\zlabel{mathscope.occurrence.160}}
  ({\protect\hypertarget{mathscope.occurrence.161}{\protect\hyperlink{mathscope.symbol.2235a5c7430f5467}{X_i}}\protect\zlabel{mathscope.occurrence.161}})\bigr)
  -{\protect\hypertarget{mathscope.occurrence.162}{\protect\hyperlink{mathscope.symbol.2235a5c7430f5467}{X_i}}\protect\zlabel{mathscope.occurrence.162}}
  \qquad
  (1\leq i,j\leq
   {\protect\hypertarget{mathscope.occurrence.165}{\protect\hyperlink{mathscope.symbol.7376975d9cb4a165}{N}}\protect\zlabel{mathscope.occurrence.165}}).
\]

Third, it contains the disjointness commutators
\[
  {\protect\hypertarget{mathscope.occurrence.166}{\protect\hyperlink{mathscope.symbol.2235a5c7430f5467}{X_i}}\protect\zlabel{mathscope.occurrence.166}}{\protect\hypertarget{mathscope.occurrence.167}{\protect\hyperlink{mathscope.symbol.2235a5c7430f5467}{X_j}}\protect\zlabel{mathscope.occurrence.167}}
  -{\protect\hypertarget{mathscope.occurrence.168}{\protect\hyperlink{mathscope.symbol.2235a5c7430f5467}{X_j}}\protect\zlabel{mathscope.occurrence.168}}{\protect\hypertarget{mathscope.occurrence.169}{\protect\hyperlink{mathscope.symbol.2235a5c7430f5467}{X_i}}\protect\zlabel{mathscope.occurrence.169}}
  \qquad
  \bigl({\protect\hypertarget{mathscope.occurrence.170}{\protect\hyperlink{mathscope.symbol.48fe6ffe25e5deb2}{\iota}}\protect\zlabel{mathscope.occurrence.170}}
  ({\protect\hypertarget{mathscope.occurrence.171}{\protect\hyperlink{mathscope.symbol.33c46987b405e702}{\gamma_i}}\protect\zlabel{mathscope.occurrence.171}},{\protect\hypertarget{mathscope.occurrence.172}{\protect\hyperlink{mathscope.symbol.33c46987b405e702}{\gamma_j}}\protect\zlabel{mathscope.occurrence.172}})=0\bigr).
\]
\end{definition}

\begin{definition}[Santharoubane Candidate Algebra]\label{def:candidate-algebra}
\revdel{Following Equation~(4)}\revadd{Following Equation~(3)} of
\textcite{Santharoubane2024}, for an admissible
presentation datum \({\protect\hypertarget{mathscope.occurrence.173}{\protect\hyperlink{mathscope.symbol.92d3b3498b287ed3}{\mathcal P}}\protect\zlabel{mathscope.occurrence.173}}\), define the
\emph{Santharoubane candidate algebra associated to
\({\protect\hypertarget{mathscope.occurrence.174}{\protect\hyperlink{mathscope.symbol.92d3b3498b287ed3}{\mathcal P}}\protect\zlabel{mathscope.occurrence.174}}\)} by
\[
  {\protect\hypertarget{mathscope.occurrence.175}{\protect\hyperlink{mathscope.symbol.a5ff2d70ad4fe714}{\mathcal A(\overline{\Gamma(\Sigma)};\mathcal P)}}\protect\zlabel{mathscope.occurrence.175}}
  :={\protect\hypertarget{mathscope.occurrence.176}{\protect\hyperlink{mathscope.symbol.31f1be226466dba2}{F}}\protect\zlabel{mathscope.occurrence.176}}/{\protect\hypertarget{mathscope.occurrence.177}{\protect\hyperlink{mathscope.symbol.4bda42526902b34c}{R_{\mathcal P}}}\protect\zlabel{mathscope.occurrence.177}}.
\]
\end{definition}

\ifshowrevisions
\begingroup
\color{gray}
\begin{fact}[Descent of \({\protect\hypertarget{mathscope.occurrence.178}{\protect\hyperlink{mathscope.symbol.85acf16e72781377}{T_{j,\epsilon}}}\protect\zlabel{mathscope.occurrence.178}}\)]\label{fact:quotient-action}
Each \({\protect\hypertarget{mathscope.occurrence.179}{\protect\hyperlink{mathscope.symbol.85acf16e72781377}{T_{j,\epsilon}}}\protect\zlabel{mathscope.occurrence.179}}\) descends to a
\({\protect\hypertarget{mathscope.occurrence.180}{\protect\hyperlink{mathscope.symbol.793c1bdec545e306}{\mathbb{Q}(A)}}\protect\zlabel{mathscope.occurrence.180}}\)-algebra endomorphism of
\({\protect\hypertarget{mathscope.occurrence.181}{\protect\hyperlink{mathscope.symbol.a5ff2d70ad4fe714}{\mathcal A(\overline{\Gamma(\Sigma)};\mathcal P)}}\protect\zlabel{mathscope.occurrence.181}}\), and the endomorphisms induced
by \({\protect\hypertarget{mathscope.occurrence.182}{\protect\hyperlink{mathscope.symbol.85acf16e72781377}{T_{j,+}}}\protect\zlabel{mathscope.occurrence.182}}\) and
\({\protect\hypertarget{mathscope.occurrence.183}{\protect\hyperlink{mathscope.symbol.85acf16e72781377}{T_{j,-}}}\protect\zlabel{mathscope.occurrence.183}}\) are mutually inverse
automorphisms.
\end{fact}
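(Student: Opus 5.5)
The plan has two parts: first prove the inverse statement at the level of the free algebra \(F\), then prove that \(R_{\mathcal P}\) is stable under each \(T_{j,\epsilon}\). Once stability is known, both endomorphisms descend to \(\mathcal A(\overline{\Gamma(\Sigma)};\mathcal P)\), and the inverse statement passes to the quotient.

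\emph{Step 1 (compositions on generators).} Since \(\iota(\gamma_j,\gamma_j)=0\), we have \(T_{j,\pm}(X_j)=X_j\). If \(\iota(\gamma_j,\gamma_i)=0\), both compositions fix \(X_i\). If \(\iota(\gamma_j,\gamma_i)=1\), a direct expansion gives
\[
  T_{j,+}T_{j,-}(X_i)=T_{j,-}T_{j,+}(X_i)
  =(A^2+A^{-2})X_jX_iX_j-X_j^2X_i-X_iX_j^2 .
\]
The expression is symmetric under \(A\leftrightarrow A^{-1}\), which is why the two orders agree. Hence \(T_{j,+}T_{j,-}=T_{j,-}T_{j,+}\) already as endomorphisms of \(F\). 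Moreover, \(T_{j,-}T_{j,+}(X_i)-X_i\) is literally an inverse defect. So, once descent is established, the inverse-defect generators say exactly that both composites induce the identity on \(\mathcal A(\overline{\Gamma(\Sigma)};\mathcal P)\), and the two induced maps are mutually inverse automorphisms.

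\emph{Step 2 (stability of \(R_{\mathcal P}\)).} Because \(T_{j,\epsilon}\) is an algebra endomorphism, it suffices to show that it maps each of the three generating families into \(R_{\mathcal P}\).
\begin{itemize}
\item \emph{Inverse defects.} For \(j'=j\), these are handled by the commutation from Step 1. For \(j'\neq j\), I would write \(T_{j',\epsilon}T_{j,+}T_{j,-}\) as the conjugate \(\bigl(T_{j',\epsilon}T_{j,+}T_{j',-\epsilon}\bigr)\bigl(T_{j',\epsilon}T_{j,-}T_{j',-\epsilon}\bigr)T_{j',\epsilon}\), correct modulo inverse defects.
\item \emph{Relator defects.} Applying \(T_{j,\epsilon}\) turns \(R_\ell(T)\) into the conjugated word \(t_j^{\epsilon}R_\ell t_j^{-\epsilon}\). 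In the free group this word is a product of conjugates of the \(R_m\) and free cancellations. Each free cancellation \(t_kt_k^{-1}\) is absorbed by an inverse defect. Each relator factor acts trivially on the \(X_i\) modulo relator defects.
\end{itemize}

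To make the argument in the second bullet non-circular, I would introduce the smallest ideal \(\widehat R\supseteq R_{\mathcal P}\) stable under all \(T_{k,\pm}\), which is the original ``bi-ideal'' reading. I would then prove \(\widehat R=R_{\mathcal P}\) by induction on the number of operators applied to a generator, using the word manipulations above at each stage.

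\emph{Main obstacle.} I expect the disjointness commutators to be the genuine difficulty. Suppose \(\gamma_i,\gamma_k\) are disjoint but each meets \(\gamma_j\) once. Then \(T_j\) sends \(X_iX_k-X_kX_i\) to the commutator of \(AX_jX_i-A^{-1}X_iX_j\) and \(AX_jX_k-A^{-1}X_kX_j\). This vanishing is a skein identity about the curves \(t_j(\gamma_i)\) and \(t_j(\gamma_k)\), which need not lie in the family. My first attempt would be to realize \(t_j\gamma_i\) as \(w(\gamma_{i'})\) for some word \(w\) and index \(i'\), using that the twists generate the mapping class group. I would then transport an existing disjointness commutator by the operator word \(w\), modulo relator defects.

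If this route fails, the fact may simply not hold for the ordinary two-sided ideal. In that case the honest statement is descent to \(F/\widehat R\), with Step 1 unchanged.
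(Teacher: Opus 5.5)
\textbf{Verdict.} Your fallback is the paper's argument. Your main route (Step~2) has a genuine gap, but the Fact does not need it.

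\textbf{Descent.} The paper gets descent in one line from ``the stability condition in the definition of $R_{\mathcal P}$.'' This Fact is written for the reading in which $R_{\mathcal P}$ is the smallest two-sided ideal containing the three families and stable under every $T_{j,\epsilon}$, i.e.\ your $\widehat R$. The paper gives no argument that the ordinary two-sided ideal is stable.

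For the ordinary ideal, your Step~2 does not establish stability. Without stability, the only legitimate manipulation is composition. If $\phi(X_i)-X_i$ and $\psi(X_i)-X_i$ lie in $R_{\mathcal P}$ for all $i$, then $\phi(P)-P\in R_{\mathcal P}$ for every $P\in F$, and hence $\phi\psi(X_i)-X_i\in R_{\mathcal P}$.

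The following manipulations all require the operators to the left to map $R_{\mathcal P}$ into itself, which is exactly the claim being proved:
\begin{itemize}
\item conjugating a relator word by $T_{j,\epsilon}$;
\item deleting a cancelling pair $T_{k,+}T_{k,-}$, or a relator factor, anywhere other than the left end of an operator word;
\item transporting a disjointness commutator by an operator word $w(T)$.
\end{itemize}
So the relator-defect bullet is circular. Your planned induction proving $\widehat R=R_{\mathcal P}$ has no mechanism to close. Nothing in the sketch expresses $T_j(X_iX_k-X_kX_i)$ in terms of the three generating families.

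The statement you can actually prove is the one you fall back on: descent to $F/\widehat R$. This does not affect the main theorem. Since $\mathfrak m$ is a $T_{j,\epsilon}$-stable two-sided ideal containing all three families, the augmentation character exists under either reading.

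\textbf{Inverse property.} Here you depart from the paper and improve on it. The paper uses the inverse defects for the composite $T_{j,+}T_{j,-}$, and cites the statement following Santharoubane's Equation~(3) for the reverse composite.

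Your computation is correct. Both composites fix $X_i$ when $\iota(\gamma_j,\gamma_i)=0$, including $i=j$. Both send $X_i$ to $(A^2+A^{-2})X_jX_iX_j-X_j^2X_i-X_iX_j^2$ when $\iota(\gamma_j,\gamma_i)=1$. Hence $T_{j,+}T_{j,-}=T_{j,-}T_{j,+}$ already as endomorphisms of $F$, and the inverse defects alone give both composite identities on the quotient.

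Combined with the immediate descent for $\widehat R$, Step~1 is a complete, self-contained proof of the Fact that avoids the external citation.
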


\begin{proof}
The stability condition in the definition of \({\protect\hypertarget{mathscope.occurrence.184}{\protect\hyperlink{mathscope.symbol.4bda42526902b34c}{R_{\mathcal P}}}\protect\zlabel{mathscope.occurrence.184}}\)
gives descent.
The generators
\[
  {\protect\hypertarget{mathscope.occurrence.185}{\protect\hyperlink{mathscope.symbol.85acf16e72781377}{T_{j,+}}}\protect\zlabel{mathscope.occurrence.185}}
  {\protect\hypertarget{mathscope.occurrence.186}{\protect\hyperlink{mathscope.symbol.85acf16e72781377}{T_{j,-}}}\protect\zlabel{mathscope.occurrence.186}}
  ({\protect\hypertarget{mathscope.occurrence.187}{\protect\hyperlink{mathscope.symbol.2235a5c7430f5467}{X_i}}\protect\zlabel{mathscope.occurrence.187}})-{\protect\hypertarget{mathscope.occurrence.188}{\protect\hyperlink{mathscope.symbol.2235a5c7430f5467}{X_i}}\protect\zlabel{mathscope.occurrence.188}}
  \in{\protect\hypertarget{mathscope.occurrence.189}{\protect\hyperlink{mathscope.symbol.4bda42526902b34c}{R_{\mathcal P}}}\protect\zlabel{mathscope.occurrence.189}}
\]
give one composite identity on the quotient generators, and
the statement immediately following
\textcite[Equation~(3)]{Santharoubane2024} gives the reverse composite
identity.
\end{proof}
\endgroup
\fi

\begin{fact}[Surjection onto Skein Algebra]\label{fact:santharoubane-surjection}
For every admissible presentation datum
\({\protect\hypertarget{mathscope.occurrence.190}{\protect\hyperlink{mathscope.symbol.92d3b3498b287ed3}{\mathcal P}}\protect\zlabel{mathscope.occurrence.190}}\),
\textcite[Corollary~1.2]{Santharoubane2024} gives a canonical surjective
\({\protect\hypertarget{mathscope.occurrence.191}{\protect\hyperlink{mathscope.symbol.793c1bdec545e306}{\mathbb{Q}(A)}}\protect\zlabel{mathscope.occurrence.191}}\)-algebra homomorphism
\[
  {\protect\hypertarget{mathscope.occurrence.192}{\protect\hyperlink{mathscope.symbol.48d22d5de280e7af}{\Psi_{\mathcal P}}}\protect\zlabel{mathscope.occurrence.192}}\colon
  {\protect\hypertarget{mathscope.occurrence.193}{\protect\hyperlink{mathscope.symbol.a5ff2d70ad4fe714}{\mathcal A(\overline{\Gamma(\Sigma)};\mathcal P)}}\protect\zlabel{mathscope.occurrence.193}}
  \longrightarrow
  {\protect\hypertarget{mathscope.occurrence.194}{\protect\hyperlink{mathscope.symbol.2fb732ea957dfec0}{\mathcal{S}(\Sigma,\mathbb{Q}(A))}}\protect\zlabel{mathscope.occurrence.194}}.
\]
\revdel{The generator formula was previously written without its
normalization factor.}
\begin{revaddblock}
By \textcite[Equation~(1)]{Santharoubane2024}, it is determined on
generators by the normalized assignment
\[
  {\protect\hypertarget{mathscope.occurrence.195}{\protect\hyperlink{mathscope.symbol.48d22d5de280e7af}{\Psi_{\mathcal P}}}\protect\zlabel{mathscope.occurrence.195}}\bigl([{\protect\hypertarget{mathscope.occurrence.196}{\protect\hyperlink{mathscope.symbol.2235a5c7430f5467}{X_i}}\protect\zlabel{mathscope.occurrence.196}}]\bigr)
  =\frac{{\protect\hypertarget{mathscope.occurrence.197}{\protect\hyperlink{mathscope.symbol.33c46987b405e702}{\gamma_i}}\protect\zlabel{mathscope.occurrence.197}}}
  {{\protect\hypertarget{mathscope.occurrence.198}{\protect\hyperlink{mathscope.symbol.a8c0b3ceba5c59dd}{A}}\protect\zlabel{mathscope.occurrence.198}}^{2}-{\protect\hypertarget{mathscope.occurrence.199}{\protect\hyperlink{mathscope.symbol.a8c0b3ceba5c59dd}{A}}\protect\zlabel{mathscope.occurrence.199}}^{-2}}.
\]
\end{revaddblock}
\end{fact}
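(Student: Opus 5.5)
The plan is to construct \(\Psi_{\mathcal P}\) first on the free algebra \(F\), show that it kills \(R_{\mathcal P}\), and then prove surjectivity. Define \(\phi\colon F\to\mathcal S(\Sigma,\mathbb Q(A))\) as the unique \(\mathbb Q(A)\)-algebra homomorphism with \(\phi(X_i)=\gamma_i/(A^2-A^{-2})\). The key intertwining identity to establish is
\[
  \phi\circ T_{j,\epsilon}=t_j^{\epsilon}\circ\phi
  \qquad(j\in I,\ \epsilon=\pm1),
\]
where \(t_j^{\epsilon}\) acts on skeins by push-forward. Both sides are algebra homomorphisms, because mapping classes act by algebra automorphisms of the skein algebra. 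It therefore suffices to check the identity on each \(X_k\).

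If \(\iota(\gamma_j,\gamma_k)=0\), then \(t_j\) fixes \(\gamma_k\) up to isotopy, and both sides agree. If \(\iota(\gamma_j,\gamma_k)=1\), resolve the single crossing in \(\gamma_j\gamma_k\) and in \(\gamma_k\gamma_j\) with the Kauffman relation. This gives
\[
  \gamma_j\gamma_k=A\,t_j^{\pm1}(\gamma_k)+A^{-1}t_j^{\mp1}(\gamma_k),
  \qquad
  \gamma_k\gamma_j=A^{-1}t_j^{\pm1}(\gamma_k)+A\,t_j^{\mp1}(\gamma_k).
\]
Forming \(A^{\epsilon}\gamma_j\gamma_k-A^{-\epsilon}\gamma_k\gamma_j\) isolates \(\pm(A^2-A^{-2})\,t_j^{\epsilon}(\gamma_k)\). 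After dividing by \((A^2-A^{-2})^2\), the sign \(\epsilon\) in \eqref{eq:santharoubane-action} and the normalization in \(\phi\) are exactly what turn this into \(\phi(T_{j,\epsilon}X_k)=t_j^{\epsilon}\phi(X_k)\). The real work here is bookkeeping: the orientation of the right twist and the Kauffman-triple convention of \textcite[Figure~1]{Santharoubane2024} must be matched carefully.

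Next, check that \(\phi\) vanishes on the three generating families of \(R_{\mathcal P}\); since \(\phi\) is an algebra map, \(\phi(R_{\mathcal P})=0\) then follows.

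\begin{itemize}
\item \textbf{Inverse defects.} Iterating the intertwining identity gives \(\phi(T_{j,+}T_{j,-}X_i)=t_jt_j^{-1}\phi(X_i)=\phi(X_i)\).
\item \textbf{Action defects.} Here composition order matters, so I will use the convention that the rightmost factor acts first. This gives \(\phi(R_\ell(T)(X_i))=w_\ell\cdot\phi(X_i)\), where \(w_\ell\in\Gamma(\Sigma)\) is the word \(R_\ell\) evaluated there. Since \(R_\ell=1\) in \(\overline{\Gamma(\Sigma)}\), the element \(w_\ell\) lies in \(Z(\Gamma(\Sigma))\).
\begin{itemize}
\item For \(\Sigma\) closed with \(g\geq3\), the center is trivial.
\item For one boundary component, the center is generated by the boundary-parallel twist. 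That twist is supported in a collar and moves no curve in the interior up to isotopy, so it acts trivially on skeins.
\end{itemize}
Hence \(w_\ell\cdot\phi(X_i)=\phi(X_i)\).
\item \textbf{Disjointness commutators.} Disjoint curves commute under stacking, since one can be slid past the other vertically.
\end{itemize}
Thus \(\phi\) descends to a homomorphism \(\Psi_{\mathcal P}\) on \(\mathcal A(\overline{\Gamma(\Sigma)};\mathcal P)\).

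For surjectivity, the intertwining identity shows that the image of \(\Psi_{\mathcal P}\) is stable under every \(t_j^{\pm1}\). Since the \(t_j\) generate \(\Gamma(\Sigma)\), the image is stable under all of \(\Gamma(\Sigma)\). The group \(\Gamma(\Sigma)\) acts transitively on nonseparating simple closed curves (relative to the boundary, for \(g\geq3\)). Hence every nonseparating curve, being a translate of \(\gamma_1\), lies in the image.

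It remains to see that nonseparating curves generate \(\mathcal S(\Sigma,\mathbb Q(A))\) as an algebra. This is the step I expect to be the main obstacle. I would first invoke Bullock's theorem that finitely many simple closed curves generate the skein algebra, and then reduce separating curves to nonseparating ones.

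\begin{itemize}
\item For a separating curve \(c\), I would choose nonseparating curves \(a,b\) meeting twice so that resolving \(ab\) produces \(c\) together with terms that are multicurves of nonseparating components, or scalars.
\item If \(c\) is boundary-parallel, choose \(a,b\) in a neighborhood of \(\partial\Sigma\) together with a handle.
\item Because \(A^{\pm2}\) and \(A^2-A^{-2}\) are invertible in \(\mathbb Q(A)\), \(c\) can then be solved for.
\end{itemize}

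Alternatively, I would rely on the fact that the generic skein algebra is spanned by multicurves and induct on the number of separating components. I would cite \textcite[Corollary~1.2]{Santharoubane2024} for the final details.
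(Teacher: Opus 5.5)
Your construction of $\phi$ and the proof that it kills $R_{\mathcal P}$ follow the paper's argument. The difference is that you prove the intertwining identity $\phi\circ T_{j,\epsilon}=t_j^{\epsilon}\circ\phi$ and the triviality of the central action directly, rather than citing Santharoubane. You do this by resolving the single crossing, and by using the trivial center in the closed case and the collar-supported boundary twist otherwise; both checks are fine. For surjectivity the paper simply cites Santharoubane's Theorem~1.1, which says that the $\gamma_i$ generate $\mathcal S(\Sigma,\mathbb Q(A))$. Your replacement is correct up to the point where the image is $\Gamma(\Sigma)$-stable and contains every nonseparating curve. However, the mechanism you propose does not establish the final claim that nonseparating curves generate.

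Concretely, the step ``choose nonseparating $a,b$ meeting twice so that resolving $ab$ produces $c$ together with nonseparating multicurves or scalars'' fails in general. If $\iota(a,b)=2$, a regular neighbourhood $N$ of $a\cup b$ is either a four-holed sphere or a two-holed torus. In the four-holed-sphere case, the two states with coefficients $A^{\pm2}$ induce the same partition of the four boundary circles of $N$. So one of them is separating in $\Sigma$ exactly when the other is, and the two remaining states are products of two boundary curves. In the two-holed-torus case, $ab=A^{2}m+A^{-2}m'+d_1+d_2$, where $m,m'$ are multicurves with nonseparating components and $d_1,d_2$ are the boundary curves of $N$. If $d_1=c$ separates $\Sigma$, then so does $d_2$, and $d_2$ bounds a disk only when $c$ cuts off a one-holed torus. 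Hence no single product isolates a curve $c$ that cuts off genus at least $2$ on both sides. Nor does it isolate $\partial\Sigma$ when $g\geq2$: your ``neighbourhood of $\partial\Sigma$ together with a handle'' yields $\partial\Sigma+d_2$ with $d_2$ separating.

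The repair is an induction on the genus $h$ of a side $X$ of $c$ disjoint from $\partial\Sigma$; if $c$ is parallel to $\partial\Sigma$, take $X$ to be $\Sigma$ minus a collar. Choose the two-holed torus $N\subset X$ with $d_1=c$. Then $c=ab-A^{2}m-A^{-2}m'-d_2$, where $d_2$ cuts off genus $h-1$ inside $X$; when $h=1$, $d_2$ bounds a disk and contributes $A^2+A^{-2}$. With this induction your argument becomes a self-contained proof of the generation statement that the paper imports from Santharoubane's Theorem~1.1. Without it, you are relying on that same citation, just as the paper does.
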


\begin{revaddblock}
\begin{proof}
Since \({\protect\hypertarget{mathscope.occurrence.200}{\protect\hyperlink{mathscope.symbol.31f1be226466dba2}{F}}\protect\zlabel{mathscope.occurrence.200}}\) is free, the normalized assignment
extends uniquely to a \({\protect\hypertarget{mathscope.occurrence.201}{\protect\hyperlink{mathscope.symbol.793c1bdec545e306}{\mathbb{Q}(A)}}\protect\zlabel{mathscope.occurrence.201}}\)-algebra homomorphism
\[
  {\protect\hypertarget{mathscope.occurrence.202}{\protect\hyperlink{mathscope.symbol.dc04d032aad1fbd0}{\widetilde\Psi_{\mathcal P}}}\protect\zlabel{mathscope.occurrence.202}}\colon
  {\protect\hypertarget{mathscope.occurrence.203}{\protect\hyperlink{mathscope.symbol.31f1be226466dba2}{F}}\protect\zlabel{mathscope.occurrence.203}}\longrightarrow
  {\protect\hypertarget{mathscope.occurrence.204}{\protect\hyperlink{mathscope.symbol.2fb732ea957dfec0}{\mathcal{S}(\Sigma,\mathbb{Q}(A))}}\protect\zlabel{mathscope.occurrence.204}},
  \qquad
  {\protect\hypertarget{mathscope.occurrence.205}{\protect\hyperlink{mathscope.symbol.dc04d032aad1fbd0}{\widetilde\Psi_{\mathcal P}}}\protect\zlabel{mathscope.occurrence.205}}
  ({\protect\hypertarget{mathscope.occurrence.206}{\protect\hyperlink{mathscope.symbol.2235a5c7430f5467}{X_i}}\protect\zlabel{mathscope.occurrence.206}})=
  \frac{{\protect\hypertarget{mathscope.occurrence.207}{\protect\hyperlink{mathscope.symbol.33c46987b405e702}{\gamma_i}}\protect\zlabel{mathscope.occurrence.207}}}
  {{\protect\hypertarget{mathscope.occurrence.208}{\protect\hyperlink{mathscope.symbol.a8c0b3ceba5c59dd}{A}}\protect\zlabel{mathscope.occurrence.208}}^{2}-{\protect\hypertarget{mathscope.occurrence.209}{\protect\hyperlink{mathscope.symbol.a8c0b3ceba5c59dd}{A}}\protect\zlabel{mathscope.occurrence.209}}^{-2}}.
\]
Because the relators present the central quotient, each corresponding
mapping class lies in the center of \(\Gamma(\Sigma)\).
The center acts trivially on the skein algebra
\parencite[p.~2573]{Santharoubane2024}.
The intertwining identity in the unnumbered display immediately following
\textcite[Equation~(2)]{Santharoubane2024} therefore sends the mapping-class
defects and inverse defects to zero under
\({\protect\hypertarget{mathscope.occurrence.210}{\protect\hyperlink{mathscope.symbol.dc04d032aad1fbd0}{\widetilde\Psi_{\mathcal P}}}\protect\zlabel{mathscope.occurrence.210}}\); the disjointness commutators are also sent
to zero because disjoint skeins commute.
Thus \({\protect\hypertarget{mathscope.occurrence.211}{\protect\hyperlink{mathscope.symbol.4bda42526902b34c}{R_{\mathcal P}}}\protect\zlabel{mathscope.occurrence.211}}\) is contained in the kernel of
\({\protect\hypertarget{mathscope.occurrence.212}{\protect\hyperlink{mathscope.symbol.dc04d032aad1fbd0}{\widetilde\Psi_{\mathcal P}}}\protect\zlabel{mathscope.occurrence.212}}\), so this map factors through
\({\protect\hypertarget{mathscope.occurrence.213}{\protect\hyperlink{mathscope.symbol.a5ff2d70ad4fe714}{\mathcal A(\overline{\Gamma(\Sigma)};\mathcal P)}}\protect\zlabel{mathscope.occurrence.213}}\) as the displayed map
\({\protect\hypertarget{mathscope.occurrence.214}{\protect\hyperlink{mathscope.symbol.48d22d5de280e7af}{\Psi_{\mathcal P}}}\protect\zlabel{mathscope.occurrence.214}}\).
The curves \({\protect\hypertarget{mathscope.occurrence.215}{\protect\hyperlink{mathscope.symbol.33c46987b405e702}{\gamma_i}}\protect\zlabel{mathscope.occurrence.215}}\) generate
\({\protect\hypertarget{mathscope.occurrence.216}{\protect\hyperlink{mathscope.symbol.2fb732ea957dfec0}{\mathcal{S}(\Sigma,\mathbb{Q}(A))}}\protect\zlabel{mathscope.occurrence.216}}\) by
\textcite[Theorem~1.1]{Santharoubane2024}, and
\({\protect\hypertarget{mathscope.occurrence.217}{\protect\hyperlink{mathscope.symbol.a8c0b3ceba5c59dd}{A}}\protect\zlabel{mathscope.occurrence.217}}^{2}-{\protect\hypertarget{mathscope.occurrence.218}{\protect\hyperlink{mathscope.symbol.a8c0b3ceba5c59dd}{A}}\protect\zlabel{mathscope.occurrence.218}}^{-2}\) is invertible
in \({\protect\hypertarget{mathscope.occurrence.219}{\protect\hyperlink{mathscope.symbol.793c1bdec545e306}{\mathbb{Q}(A)}}\protect\zlabel{mathscope.occurrence.219}}\); hence \({\protect\hypertarget{mathscope.occurrence.220}{\protect\hyperlink{mathscope.symbol.48d22d5de280e7af}{\Psi_{\mathcal P}}}\protect\zlabel{mathscope.occurrence.220}}\) is surjective.
\end{proof}
\end{revaddblock}

\ifshowrevisions
\begingroup
\color{gray}
\begin{remark}[Equivariance of the canonical surjection]
\label{rem:santharoubane-equivariance}
For every \(j\in{\protect\hypertarget{mathscope.occurrence.222}{\protect\hyperlink{mathscope.symbol.d7ae2a30537d8b97}{I}}\protect\zlabel{mathscope.occurrence.222}}\) and
\(\epsilon\in\{+1,-1\}\),
the canonical surjection intertwines the two actions:
\[
  {\protect\hypertarget{mathscope.occurrence.226}{\protect\hyperlink{mathscope.symbol.48d22d5de280e7af}{\Psi_{\mathcal P}}}\protect\zlabel{mathscope.occurrence.226}}\circ{\protect\hypertarget{mathscope.occurrence.227}{\protect\hyperlink{mathscope.symbol.85acf16e72781377}{T_{j,\epsilon}}}\protect\zlabel{mathscope.occurrence.227}}
  ={\protect\hypertarget{mathscope.occurrence.228}{\protect\hyperlink{mathscope.symbol.5e984aafe8446c78}{t_{\gamma_j}}}\protect\zlabel{mathscope.occurrence.228}}^{\epsilon}
    \circ{\protect\hypertarget{mathscope.occurrence.230}{\protect\hyperlink{mathscope.symbol.48d22d5de280e7af}{\Psi_{\mathcal P}}}\protect\zlabel{mathscope.occurrence.230}}.
\]
Thus \({\protect\hypertarget{mathscope.occurrence.231}{\protect\hyperlink{mathscope.symbol.48d22d5de280e7af}{\Psi_{\mathcal P}}}\protect\zlabel{mathscope.occurrence.231}}\) is equivariant for the canonical
\({\protect\hypertarget{mathscope.occurrence.232}{\protect\hyperlink{mathscope.symbol.98acdb437f3af557}{\overline{\Gamma(\Sigma)}}}\protect\zlabel{mathscope.occurrence.232}}\)-action on
\({\protect\hypertarget{mathscope.occurrence.233}{\protect\hyperlink{mathscope.symbol.a5ff2d70ad4fe714}{\mathcal A(\overline{\Gamma(\Sigma)};\mathcal P)}}\protect\zlabel{mathscope.occurrence.233}}\) and the mapping-class-group
action via Dehn twists on \({\protect\hypertarget{mathscope.occurrence.234}{\protect\hyperlink{mathscope.symbol.2fb732ea957dfec0}{\mathcal{S}(\Sigma,\mathbb{Q}(A))}}\protect\zlabel{mathscope.occurrence.234}}\).
This is the intertwining identity recorded in \textcite[Equation~(1),
Equation~(2), and the unnumbered display immediately following
Equation~(2)]{Santharoubane2024}.
\end{remark}
\endgroup
\fi

\begin{conjecture}[Good Presentation of Skein Algebra]\label{conj:santharoubane}
\revdel{The conjecture was previously stated as requiring the canonical
surjection itself to be an isomorphism.}
\revadd{Fix an arbitrary finite curve family satisfying the curve-family
conditions in \cref{def:admissible-presentation-datum}.
\textcite[Conjecture~1.3]{Santharoubane2024} asks whether there exists a
presentation of
\({\protect\hypertarget{mathscope.occurrence.667}{\protect\hyperlink{mathscope.symbol.98acdb437f3af557}{\overline{\Gamma(\Sigma)}}}\protect\zlabel{mathscope.occurrence.667}}\)
with respect to the corresponding Dehn twists such that, on denoting the
resulting complete datum by
\({\protect\hypertarget{mathscope.occurrence.235}{\protect\hyperlink{mathscope.symbol.92d3b3498b287ed3}{\mathcal P}}\protect\zlabel{mathscope.occurrence.235}}\),
\({\protect\hypertarget{mathscope.occurrence.236}{\protect\hyperlink{mathscope.symbol.a5ff2d70ad4fe714}{\mathcal A(\overline{\Gamma(\Sigma)};\mathcal P)}}\protect\zlabel{mathscope.occurrence.236}}\) is isomorphic to
\({\protect\hypertarget{mathscope.occurrence.237}{\protect\hyperlink{mathscope.symbol.2fb732ea957dfec0}{\mathcal{S}(\Sigma,\mathbb{Q}(A))}}\protect\zlabel{mathscope.occurrence.237}}\) as a noncommutative
\({\protect\hypertarget{mathscope.occurrence.238}{\protect\hyperlink{mathscope.symbol.793c1bdec545e306}{\mathbb{Q}(A)}}\protect\zlabel{mathscope.occurrence.238}}\)-algebra.}
\end{conjecture}

\begin{remark}[Topological content of the conjecture]
\revadd{For each fixed admissible curve family, the conjecture varies only
the chosen presentation of
\({\protect\hypertarget{mathscope.occurrence.240}{\protect\hyperlink{mathscope.symbol.98acdb437f3af557}{\overline{\Gamma(\Sigma)}}}\protect\zlabel{mathscope.occurrence.240}}\).
The notation
\({\protect\hypertarget{mathscope.occurrence.239}{\protect\hyperlink{mathscope.symbol.92d3b3498b287ed3}{\mathcal P}}\protect\zlabel{mathscope.occurrence.239}}\) packages the fixed curve family together
with that presentation.
Theorem~\ref{thm:obstruction} establishes non-isomorphism uniformly over
both choices, which is stronger than needed to refute the cited
conjecture.}
\revdel{If }\revdelnostrike{\cref{conj:santharoubane}}\revdel{ were true, the
displayed quotient would be a finite presentation of the skein algebra
}\revdelnostrike{\({\protect\hypertarget{mathscope.occurrence.241}{\protect\hyperlink{mathscope.symbol.2fb732ea957dfec0}{\mathcal{S}(\Sigma,\mathbb{Q}(A))}}\protect\zlabel{mathscope.occurrence.241}}\)}
\revdel{ with transparent topological meaning.}
\revdel{Its generators map to the unnormalized curve skeins.}
\revadd{For every admissible presentation datum, the canonical surjection
\({\protect\hypertarget{mathscope.occurrence.242}{\protect\hyperlink{mathscope.symbol.48d22d5de280e7af}{\Psi_{\mathcal P}}}\protect\zlabel{mathscope.occurrence.242}}\) sends the displayed generators to the normalized
curve skeins
\({\protect\hypertarget{mathscope.occurrence.243}{\protect\hyperlink{mathscope.symbol.33c46987b405e702}{\gamma_i}}\protect\zlabel{mathscope.occurrence.243}}/
({\protect\hypertarget{mathscope.occurrence.244}{\protect\hyperlink{mathscope.symbol.a8c0b3ceba5c59dd}{A}}\protect\zlabel{mathscope.occurrence.244}}^{2}-{\protect\hypertarget{mathscope.occurrence.245}{\protect\hyperlink{mathscope.symbol.a8c0b3ceba5c59dd}{A}}\protect\zlabel{mathscope.occurrence.245}}^{-2})\), equivalently
to invertible scalar multiples of the specified curves, while its relators arise from a presentation of
\({\protect\hypertarget{mathscope.occurrence.246}{\protect\hyperlink{mathscope.symbol.98acdb437f3af557}{\overline{\Gamma(\Sigma)}}}\protect\zlabel{mathscope.occurrence.246}}\), the Dehn-twist action, and
commutation of disjoint curves.}
\revadd{If this canonical surjection were an isomorphism, it would therefore
give a finite presentation of \({\protect\hypertarget{mathscope.occurrence.247}{\protect\hyperlink{mathscope.symbol.2fb732ea957dfec0}{\mathcal{S}(\Sigma,\mathbb{Q}(A))}}\protect\zlabel{mathscope.occurrence.247}}\) with transparent
topological meaning.}
\revdel{However, }\revdelnostrike{\cref{thm:obstruction}}\revdel{ shows that
this particular presentation is not the generic skein algebra in the range
of the present paper.}
\revadd{Conjecture~\ref{conj:santharoubane} asks only for an abstract
algebra isomorphism, not necessarily for the canonical surjection to be an
isomorphism; \cref{thm:obstruction} rules out even that weaker possibility in
the range of the present paper.}
\end{remark}

\begin{lemma}\label{lem:augmentation}
For every admissible presentation datum
\({\protect\hypertarget{mathscope.occurrence.668}{\protect\hyperlink{mathscope.symbol.92d3b3498b287ed3}{\mathcal P}}\protect\zlabel{mathscope.occurrence.668}}\), the candidate algebra
\({\protect\hypertarget{mathscope.occurrence.248}{\protect\hyperlink{mathscope.symbol.a5ff2d70ad4fe714}{\mathcal A(\overline{\Gamma(\Sigma)};\mathcal P)}}\protect\zlabel{mathscope.occurrence.248}}\) admits a character
\[
  {\protect\hypertarget{mathscope.occurrence.249}{\protect\hyperlink{mathscope.symbol.79ffac822f0cb7af}{\varepsilon_{\mathrm{aug}}}}\protect\zlabel{mathscope.occurrence.249}}\colon{\protect\hypertarget{mathscope.occurrence.250}{\protect\hyperlink{mathscope.symbol.a5ff2d70ad4fe714}{\mathcal A(\overline{\Gamma(\Sigma)};\mathcal P)}}\protect\zlabel{mathscope.occurrence.250}}\longrightarrow{\protect\hypertarget{mathscope.occurrence.251}{\protect\hyperlink{mathscope.symbol.793c1bdec545e306}{\mathbb{Q}(A)}}\protect\zlabel{mathscope.occurrence.251}}
\]
that sends every quotient generator \([{\protect\hypertarget{mathscope.occurrence.252}{\protect\hyperlink{mathscope.symbol.2235a5c7430f5467}{X_i}}\protect\zlabel{mathscope.occurrence.252}}]\) to \(0\).
\end{lemma}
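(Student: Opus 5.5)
The plan is to obtain \(\varepsilon_{\mathrm{aug}}\) by factoring the constant-term augmentation \(\varepsilon_F\) of \eqref{eq:free-augmentation} through the quotient \(F\to F/R_{\mathcal P}\). Since \(\varepsilon_F\) is a unital \(\mathbb Q(A)\)-algebra homomorphism with kernel \(\mathfrak m\), and \(R_{\mathcal P}\) is by definition the ordinary two-sided ideal generated by three explicit families, it suffices to check that every generator of \(R_{\mathcal P}\) lies in \(\mathfrak m\). Then \(R_{\mathcal P}\subseteq\mathfrak m=\ker\varepsilon_F\), because \(\mathfrak m\) is itself a two-sided ideal. The induced map on \(\mathcal A(\overline{\Gamma(\Sigma)};\mathcal P)\) is unital and sends each \([X_i]\) to \(\varepsilon_F(X_i)=0\).

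The key observation is that every signed endomorphism \(T_{j,\epsilon}\) preserves \(\mathfrak m\). By \eqref{eq:santharoubane-action}, each \(T_{j,\epsilon}(X_k)\) is either \(X_k\) or a \(\mathbb Q(A)\)-combination of the length-two words \(X_jX_k\) and \(X_kX_j\), so it lies in \(\mathfrak m\). An algebra endomorphism sends a word \(X_{i_1}\cdots X_{i_r}\) with \(r\geq1\) to the product \(T_{j,\epsilon}(X_{i_1})\cdots T_{j,\epsilon}(X_{i_r})\). This product lies in \(\mathfrak m\), since \(\mathfrak m\) is closed under multiplication. Hence \(T_{j,\epsilon}(\mathfrak m)\subseteq\mathfrak m\), and by induction on word length the same holds for any composite operator word, in particular for \(R_\ell(T_1,\ldots,T_N)\) under the signed-substitution convention.

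With this, the three families are handled directly:
\begin{itemize}
\item For a mapping-class action defect \(R_\ell(T_1,\ldots,T_N)(X_i)-X_i\), both terms lie in \(\mathfrak m\).
\item For an inverse defect \(T_{j,+}(T_{j,-}(X_i))-X_i\), both terms again lie in \(\mathfrak m\).
\item A disjointness commutator \(X_iX_j-X_jX_i\) is a combination of length-two words.
\end{itemize}
This proves \(R_{\mathcal P}\subseteq\mathfrak m\), and the factorization gives \(\varepsilon_{\mathrm{aug}}\).

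I do not expect a real obstacle. The only point needing care is that \(R_{\mathcal P}\) is the ordinary two-sided ideal generated by these families, as fixed in the definition. Even under the stronger reading, where the ideal is also required to be closed under the \(T_{j,\epsilon}\), the argument still works. The reason is that \(\mathfrak m\) is itself a two-sided ideal stable under all \(T_{j,\epsilon}\), so it contains that smallest stable ideal as well.
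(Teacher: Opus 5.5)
Your proposal is correct and follows essentially the same route as the paper: show that each \(T_{j,\epsilon}\) preserves \(\mathfrak m\) (the paper phrases this as \(\varepsilon_F\circ T_{j,\epsilon}=\varepsilon_F\)), deduce that all three generating families lie in \(\mathfrak m\), and conclude \(R_{\mathcal P}\subseteq\mathfrak m=\ker\varepsilon_F\) because \(\mathfrak m\) is a two-sided ideal. Your treatment of the endomorphisms is complete: pairwise intersection at most one, together with \(\iota(\gamma_j,\gamma_j)=0\), makes the two cases of \eqref{eq:santharoubane-action} exhaustive, and your remark about the stable-closure reading matches the paper's own aside.
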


\begin{proof}
Recall from \cref{eq:free-augmentation} that
\({\protect\hypertarget{mathscope.occurrence.253}{\protect\hyperlink{mathscope.symbol.9846e5ebdbb97960}{\mathfrak m}}\protect\zlabel{mathscope.occurrence.253}}\) is the kernel of
\({\protect\hypertarget{mathscope.occurrence.254}{\protect\hyperlink{mathscope.symbol.b03c1b8f4b4e4f8d}{\varepsilon_F}}\protect\zlabel{mathscope.occurrence.254}}\).
For every \(j\) and \(\epsilon\),
\cref{eq:santharoubane-action} gives
\[
  {\protect\hypertarget{mathscope.occurrence.257}{\protect\hyperlink{mathscope.symbol.b03c1b8f4b4e4f8d}{\varepsilon_F}}\protect\zlabel{mathscope.occurrence.257}}\circ
  {\protect\hypertarget{mathscope.occurrence.258}{\protect\hyperlink{mathscope.symbol.85acf16e72781377}{T_{j,\epsilon}}}\protect\zlabel{mathscope.occurrence.258}}
  ={\protect\hypertarget{mathscope.occurrence.259}{\protect\hyperlink{mathscope.symbol.b03c1b8f4b4e4f8d}{\varepsilon_F}}\protect\zlabel{mathscope.occurrence.259}}.
\]
In particular, every
\({\protect\hypertarget{mathscope.occurrence.260}{\protect\hyperlink{mathscope.symbol.85acf16e72781377}{T_{j,\epsilon}}}\protect\zlabel{mathscope.occurrence.260}}\) preserves
\({\protect\hypertarget{mathscope.occurrence.261}{\protect\hyperlink{mathscope.symbol.9846e5ebdbb97960}{\mathfrak m}}\protect\zlabel{mathscope.occurrence.261}}\).
Every displayed generator in the three defining families of
\({\protect\hypertarget{mathscope.occurrence.262}{\protect\hyperlink{mathscope.symbol.4bda42526902b34c}{R_{\mathcal P}}}\protect\zlabel{mathscope.occurrence.262}}\) has zero constant term and therefore
lies in \({\protect\hypertarget{mathscope.occurrence.263}{\protect\hyperlink{mathscope.symbol.9846e5ebdbb97960}{\mathfrak m}}\protect\zlabel{mathscope.occurrence.263}}\).
\revdel{The former operator-stable-closure argument then used this
invariance.}
\revadd{Since \({\protect\hypertarget{mathscope.occurrence.264}{\protect\hyperlink{mathscope.symbol.9846e5ebdbb97960}{\mathfrak m}}\protect\zlabel{mathscope.occurrence.264}}\) is a two-sided ideal, it
contains the ordinary two-sided ideal generated by those three families.}
\revdel{The same conclusion holds if Santharoubane's term
``bi-ideal'' is instead read as the smallest two-sided ideal stable
under all \({\protect\hypertarget{mathscope.occurrence.265}{\protect\hyperlink{mathscope.symbol.85acf16e72781377}{T_{j,\epsilon}}}\protect\zlabel{mathscope.occurrence.265}}\), because
\({\protect\hypertarget{mathscope.occurrence.266}{\protect\hyperlink{mathscope.symbol.9846e5ebdbb97960}{\mathfrak m}}\protect\zlabel{mathscope.occurrence.266}}\) is itself two-sided and stable under
all these endomorphisms.}
Hence \({\protect\hypertarget{mathscope.occurrence.267}{\protect\hyperlink{mathscope.symbol.4bda42526902b34c}{R_{\mathcal P}}}\protect\zlabel{mathscope.occurrence.267}}\subseteq{\protect\hypertarget{mathscope.occurrence.268}{\protect\hyperlink{mathscope.symbol.9846e5ebdbb97960}{\mathfrak m}}\protect\zlabel{mathscope.occurrence.268}}\), and the map
\({\protect\hypertarget{mathscope.occurrence.269}{\protect\hyperlink{mathscope.symbol.b03c1b8f4b4e4f8d}{\varepsilon_F}}\protect\zlabel{mathscope.occurrence.269}}\) from \cref{eq:free-augmentation} descends to the claimed
character.
\end{proof}

\ifshowrevisions
\begingroup
\color{gray}
\begin{remark}[Stable-closure augmentation]\label{rem:stable-closure}
The character in \cref{lem:augmentation} still descends if
\({\protect\hypertarget{mathscope.occurrence.270}{\protect\hyperlink{mathscope.symbol.4bda42526902b34c}{R_{\mathcal P}}}\protect\zlabel{mathscope.occurrence.270}}\) is replaced by the smallest two-sided
ideal containing its displayed generators and stable under every
\({\protect\hypertarget{mathscope.occurrence.271}{\protect\hyperlink{mathscope.symbol.85acf16e72781377}{T_{j,\epsilon}}}\protect\zlabel{mathscope.occurrence.271}}\).
Indeed, its proof shows that this stable closure is still contained in
\({\protect\hypertarget{mathscope.occurrence.272}{\protect\hyperlink{mathscope.symbol.9846e5ebdbb97960}{\mathfrak m}}\protect\zlabel{mathscope.occurrence.272}}\).
\end{remark}
\endgroup
\fi
\section{Main Result}\label{sec:main-result}
\label{sec:no-character}

The purpose of this section is to prove \cref{thm:obstruction}, which shows
that \cref{conj:santharoubane} is false.
By \cref{lem:augmentation}, the candidate algebra has a character; it
therefore remains to show that the generic skein algebra has none.

\begin{lemma}[Character Obstruction for the Generic Skein Algebra]
\label{lem:no-character}
For every compact oriented connected surface \({\protect\hypertarget{mathscope.occurrence.273}{\protect\hyperlink{mathscope.symbol.e5d086605f4f771b}{\Sigma}}\protect\zlabel{mathscope.occurrence.273}}\) of genus
\({\protect\hypertarget{mathscope.occurrence.274}{\protect\hyperlink{mathscope.symbol.cfb322816069d93a}{g}}\protect\zlabel{mathscope.occurrence.274}}\geq3\) with zero or one boundary component,
\({\protect\hypertarget{mathscope.occurrence.275}{\protect\hyperlink{mathscope.symbol.2fb732ea957dfec0}{\mathcal{S}(\Sigma,\mathbb{Q}(A))}}\protect\zlabel{mathscope.occurrence.275}}\) admits no character.
\end{lemma}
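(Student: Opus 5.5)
Suppose, for contradiction, that \(\chi\colon\mathcal S(\Sigma,\mathbb Q(A))\to\mathbb Q(A)\) is a character. Since the target is commutative, \(\chi\) kills every commutator, so each skein relation turns into a polynomial equation in the values of \(\chi\) on simple closed curves. I will write \(x_\gamma=\chi(\gamma)\) and \(D=A+A^{-1}\). The plan has two parts. First, use intersection-one relations to confine the values of \(\chi\) on nonseparating curves to a tiny finite set. Second, feed those values into a four-holed-sphere relation whose coefficients depend on \(A\) in an incompatible way.

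\emph{Step 1 (intersection-one identity).} Take nonseparating curves \(a,b\) with \(\iota(a,b)=1\). The Kauffman relation at the single crossing gives two resolutions, \(ab=A\,t_a(b)+A^{-1}t_a^{-1}(b)\) and \(ba=A^{-1}t_a(b)+A\,t_a^{-1}(b)\), up to the orientation convention fixed in the Preliminaries. Apply \(\chi\) and subtract the two equations. Since \(A-A^{-1}\neq0\) in \(\mathbb Q(A)\), this gives \(\chi(t_a(b))=\chi(t_a^{-1}(b))\), and hence \(x_ax_b=D\,x_{t_a(b)}\). Now apply this inside a one-holed torus spanned by \(a,b\), with \(c=t_a(b)\). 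The three curves \(a,b,c\) pairwise intersect once, so it applies cyclically: \(x_ax_b=Dx_c\), \(x_bx_c=Dx_a\), \(x_cx_a=Dx_b\). If one of the three values vanishes, then all three vanish. Otherwise, multiplying the equations pairwise gives \(x_a^2=x_b^2=x_c^2=D^2\). Any two nonseparating curves are joined by a chain of curves meeting once, so I expect to conclude that either \(x_\gamma=0\) for every nonseparating \(\gamma\), or \(x_\gamma=\pm D\) for every nonseparating \(\gamma\), with a product constraint on the signs.

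\emph{Step 2 (four-holed-sphere relation).} Since \(g\geq3\), I can embed a four-holed sphere \(P\subset\Sigma\) with the following properties: all four boundary curves \(\partial_1,\dots,\partial_4\) are nonseparating in \(\Sigma\), and the three standard interior curves \(x,y,z\) are nonseparating as well, where \(x,y,z\) pairwise meet twice. The Bullock--Przytycki four-holed-sphere relation has the shape
\[
  A^2xy-A^{-2}yx=(A^4-A^{-4})z+(A^2-A^{-2})(\partial_1\partial_3+\partial_2\partial_4).
\]
Applying \(\chi\) and dividing by \(A^2-A^{-2}\), it becomes \(x_xx_y=(A^2+A^{-2})x_z+x_{\partial_1}x_{\partial_3}+x_{\partial_2}x_{\partial_4}\). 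In the \(\pm D\) case, this reads \(\pm D^2=\pm(A^2+A^{-2})D\pm D^2\pm D^2\). Comparing Laurent degrees in \(A\), no choice of signs satisfies it, because the term \((A^2+A^{-2})D\) has a degree-\(3\) part that nothing else can cancel. In the all-zero case, I instead use the symmetric product relation, which contains the constant term coming from the trivial-curve factor \(-(A^2+A^{-2})\) together with \(\chi(\varnothing)=1\). Alternatively, I use the one-holed-torus cubic relation, which expresses the separating boundary \(\partial T\) as a polynomial in \(a,b,c\) plus the constant \(-2(A^2+A^{-2})\). Either route forces nonzero constants to equal \(0\), or forces \(\chi(\partial T)\) to take a value that contradicts a four-holed-sphere relation involving separating curves.

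The main obstacle is the precise bookkeeping of these relations. I have to pin down the exact coefficients and the smoothing convention of the Kauffman triple from Santharoubane's Figure~1, and to handle the degenerate all-zero branch cleanly. For that branch I would first try the relation for \(xy\) expanded as a full sum over the four resolutions of its two crossings. One resolution contains a nullhomotopic loop, and its coefficient \(-(A^2+A^{-2})\) should give a nonzero constant that survives when all curve values vanish. If this fails, I would fall back on the separating curve \(\partial T\), whose value is pinned down by the cubic relation.
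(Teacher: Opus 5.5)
Your Step~1 and the \(\pm D\) branch of Step~2 are sound, and the latter is simpler than the paper's argument. The odd part \(\pm(A^3+A+A^{-1}+A^{-3})\) of \((A^2+A^{-2})D\) cannot be matched by the even Laurent polynomials \(\pm D^2\). The paper instead expands the full cubic relation and reads off the \(A^6\)-coefficient.

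The genuine gap is the all-zero branch, which you never close. The relation you say you would try first does not contain what you claim. \(x\) and \(y\) are in minimal position in \(\Sigma_{0,4}\): their complement consists of four boundary-parallel annuli. So the four states of \(xy\) are \(z\), \(z'\), \(\partial_1\partial_3\), \(\partial_2\partial_4\), with coefficients \(A^2,A^{-2},1,1\), and none of them contains a nullhomotopic loop. When every nonseparating curve is sent to \(0\), the product relation therefore collapses to \(0=0\), exactly like the commutator relation.

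The fallback is not a proof either. In the all-zero branch the one-holed-torus relation only forces \(\chi(\partial T)=A^2+A^{-2}\), which is not contradictory by itself. (The constant \(-2(A^2+A^{-2})\) you quote is the closed-torus one.) The ``four-holed-sphere relation involving separating curves'' that is supposed to contradict this value is never specified.

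What is missing is a relation among nonseparating curves whose scalar term survives the all-zero substitution. The paper uses the Bullock--Przytycki cubic relation \eqref{eq:four-holed-relation} on the same embedded \(P\) with connected complement. Since all \(a_i\) and \(z_i\) are nonseparating, \(\chi\) kills every term except the constant, including \(q\) and the \(p_iz_i\). This gives \(0=-(A^2+A^{-2})^2\), a contradiction.

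Your \(\partial T\) route can also be completed, as follows. Make \(\partial_1,\partial_2\) parallel to one nonseparating curve. Then each \((12\mid34)\) curve bounds a one-holed torus, while the \((23\mid14)\) and \((13\mid24)\) curves and all \(\partial_i\) are nonseparating. The relation for the product of the \((23\mid14)\) and \((13\mid24)\) curves then forces \(\chi\) to vanish on a \((12\mid34)\) curve, contradicting the value \(A^2+A^{-2}\). Either way, some such concrete relation must be supplied.
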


\begin{maintheorem}\label{thm:obstruction}
Let \({\protect\hypertarget{mathscope.occurrence.276}{\protect\hyperlink{mathscope.symbol.e5d086605f4f771b}{\Sigma}}\protect\zlabel{mathscope.occurrence.276}}\) be a compact oriented connected surface of genus \({\protect\hypertarget{mathscope.occurrence.277}{\protect\hyperlink{mathscope.symbol.cfb322816069d93a}{g}}\protect\zlabel{mathscope.occurrence.277}}\geq3\)
with zero or one boundary component.
For every admissible presentation datum
\({\protect\hypertarget{mathscope.occurrence.278}{\protect\hyperlink{mathscope.symbol.92d3b3498b287ed3}{\mathcal P}}\protect\zlabel{mathscope.occurrence.278}}\), the algebra
\({\protect\hypertarget{mathscope.occurrence.279}{\protect\hyperlink{mathscope.symbol.a5ff2d70ad4fe714}{\mathcal A(\overline{\Gamma(\Sigma)};\mathcal P)}}\protect\zlabel{mathscope.occurrence.279}}\) constructed as in
\textcite{Santharoubane2024} is not isomorphic to
\({\protect\hypertarget{mathscope.occurrence.280}{\protect\hyperlink{mathscope.symbol.2fb732ea957dfec0}{\mathcal{S}(\Sigma,\mathbb{Q}(A))}}\protect\zlabel{mathscope.occurrence.280}}\) as a \({\protect\hypertarget{mathscope.occurrence.281}{\protect\hyperlink{mathscope.symbol.793c1bdec545e306}{\mathbb{Q}(A)}}\protect\zlabel{mathscope.occurrence.281}}\)-algebra.
In particular, the canonical surjection \({\protect\hypertarget{mathscope.occurrence.282}{\protect\hyperlink{mathscope.symbol.48d22d5de280e7af}{\Psi_{\mathcal P}}}\protect\zlabel{mathscope.occurrence.282}}\) is never
an isomorphism, so \cref{conj:santharoubane} is false in this range.
\revdel{The same non-isomorphism conclusion holds for the stable-closure
variant of \cref{rem:stable-closure}.}
\end{maintheorem}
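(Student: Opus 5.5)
The theorem is a formal consequence of \cref{lem:augmentation} and \cref{lem:no-character}. The plan is to argue by contradiction. Fix an admissible presentation datum \(\mathcal P\), and suppose there is a \(\mathbb Q(A)\)-algebra isomorphism
\[
  \phi\colon \mathcal A(\overline{\Gamma(\Sigma)};\mathcal P)\longrightarrow \mathcal S(\Sigma,\mathbb Q(A)).
\]
By \cref{lem:augmentation}, the candidate algebra carries the augmentation character \(\varepsilon_{\mathrm{aug}}\). The composite \(\varepsilon_{\mathrm{aug}}\circ\phi^{-1}\colon \mathcal S(\Sigma,\mathbb Q(A))\to\mathbb Q(A)\) is then a \(\mathbb Q(A)\)-linear, multiplicative and unital map. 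Indeed, \(\phi^{-1}\) is a unital algebra isomorphism, and \(\varepsilon_{\mathrm{aug}}\) is a unital algebra homomorphism. So this composite is a character of the generic skein algebra, contradicting \cref{lem:no-character}. Hence no abstract isomorphism exists, and this holds uniformly in the curve family and in the presentation of \(\overline{\Gamma(\Sigma)}\).

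The ``in particular'' clause follows at once. The canonical surjection \(\Psi_{\mathcal P}\) of \cref{fact:santharoubane-surjection} is a \(\mathbb Q(A)\)-algebra homomorphism. If it were bijective, it would be an isomorphism of the forbidden kind. Since \cref{conj:santharoubane} asks for an abstract isomorphism for some presentation, and we have excluded one for every \(\mathcal P\), the conjecture fails for every genus \(g\ge3\) surface with at most one boundary component.

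The only substantive ingredient is therefore \cref{lem:no-character}, which I take as given here. It is where the real work lies. If I had to prove it, I would argue as follows.

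\emph{Step 1.} Take a hypothetical character \(\chi\), and choose nonseparating curves \(\alpha,\beta\) with \(\iota(\alpha,\beta)=1\). Then \(\alpha\beta=A\gamma_++A^{-1}\gamma_-\) and \(\beta\alpha=A^{-1}\gamma_++A\gamma_-\), where \(\gamma_\pm\) are the two resolutions, namely \(t_\alpha^{\pm1}(\beta)\) up to convention. Because \(\mathbb Q(A)\) is commutative and \(A-A^{-1}\ne0\), these identities force \(\chi(\gamma_+)=\chi(\gamma_-)\).

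\emph{Step 2.} Propagate Step 1 along Dehn twists. This should show that \(\chi\) is constant on curves of a fixed type related by twists, and should pin down the relation between \(\chi(\alpha)\chi(\beta)\) and that common value.

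\emph{Step 3.} Combine with a product relation in an embedded four-holed sphere. There, two curves meeting twice expand into boundary-parallel and crossing curves with \(A\)-dependent coefficients. The aim is a polynomial identity in the values of \(\chi\) that has no solution in \(\mathbb Q(A)\).

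The main obstacle is this last step. One must choose the four-holed-sphere configuration so that every curve appearing is one whose \(\chi\)-value is already controlled by Steps 1 and 2. Genus \(\ge3\) is what provides enough room for such a configuration.
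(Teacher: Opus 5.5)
Your proof is correct and is exactly the paper's argument. A hypothetical isomorphism would transport the augmentation character of \cref{lem:augmentation} to a character of \(\mathcal S(\Sigma,\mathbb Q(A))\), contradicting \cref{lem:no-character}. The clause about \(\Psi_{\mathcal P}\) follows because it is an algebra homomorphism.

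Your optional sketch of \cref{lem:no-character} is not needed here, but it is in line with the paper. The paper derives the dichotomy \(\chi(\alpha)=0\) or \(\chi(\alpha)=\pm(A+A^{-1})\) from intersection-one pairs and spreads it to all nonseparating curves via connectivity of the intersection-one graph (Farb--Margalit). It then rules out both cases with the Bullock--Przytycki cubic relation in an embedded four-holed sphere with connected complement.
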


\begin{proof}
By \cref{lem:augmentation}, every
\({\protect\hypertarget{mathscope.occurrence.283}{\protect\hyperlink{mathscope.symbol.a5ff2d70ad4fe714}{\mathcal A(\overline{\Gamma(\Sigma)};\mathcal P)}}\protect\zlabel{mathscope.occurrence.283}}\) has a character.
By \cref{lem:no-character}, \({\protect\hypertarget{mathscope.occurrence.284}{\protect\hyperlink{mathscope.symbol.2fb732ea957dfec0}{\mathcal{S}(\Sigma,\mathbb{Q}(A))}}\protect\zlabel{mathscope.occurrence.284}}\) has none.
Since existence of a character is invariant under \({\protect\hypertarget{mathscope.occurrence.285}{\protect\hyperlink{mathscope.symbol.793c1bdec545e306}{\mathbb{Q}(A)}}\protect\zlabel{mathscope.occurrence.285}}\)-algebra isomorphism, the two
algebras cannot be isomorphic: if
\({\protect\hypertarget{mathscope.occurrence.286}{\protect\hyperlink{mathscope.symbol.da542716f1a4c126}{\Phi}}\protect\zlabel{mathscope.occurrence.286}}\colon{\protect\hypertarget{mathscope.occurrence.287}{\protect\hyperlink{mathscope.symbol.a5ff2d70ad4fe714}{\mathcal A(\overline{\Gamma(\Sigma)};\mathcal P)}}\protect\zlabel{mathscope.occurrence.287}}\to{\protect\hypertarget{mathscope.occurrence.288}{\protect\hyperlink{mathscope.symbol.2fb732ea957dfec0}{\mathcal{S}(\Sigma,\mathbb{Q}(A))}}\protect\zlabel{mathscope.occurrence.288}}\) were an
isomorphism and \({\protect\hypertarget{mathscope.occurrence.289}{\protect\hyperlink{mathscope.symbol.dce32381bb291802}{\varepsilon}}\protect\zlabel{mathscope.occurrence.289}}\) were the character from
\cref{lem:augmentation}, then
\({\protect\hypertarget{mathscope.occurrence.290}{\protect\hyperlink{mathscope.symbol.dce32381bb291802}{\varepsilon}}\protect\zlabel{mathscope.occurrence.290}}\circ{\protect\hypertarget{mathscope.occurrence.291}{\protect\hyperlink{mathscope.symbol.da542716f1a4c126}{\Phi}}\protect\zlabel{mathscope.occurrence.291}}^{-1}\) would contradict
\cref{lem:no-character}.
\end{proof}

\revadd{It remains to prove \cref{lem:no-character}.
We prove it in two steps.
First, we show that a hypothetical character either vanishes on every
nonseparating curve or has fixed nonzero magnitude on each such curve, with a
sign that may depend on the curve.
Second, we use one embedded four-holed-sphere relation to rule out both cases.}

\revdel{Proof of the no-character lemma}

\begin{localfact}[Farb--Margalit]\label{fact:farb-margalit-modified-complex}
For \(g\geq2\) and
\(n\geq0\), let
\(S_{g,n}\) be the genus-\(g\)
surface with \(n\) deleted punctures and no
boundary, and define
\({\protect\hypertarget{mathscope.occurrence.297}{\protect\hyperlink{mathscope.symbol.d0eeb1f9202e54ce}{\widehat{\mathcal N}}}\protect\zlabel{mathscope.occurrence.297}}
(S_{g,n})\) to be the one-dimensional simplicial complex
whose vertices are isotopy classes of nonseparating simple closed curves in
\(S_{g,n}\), with an edge between
\([a]\) and \([b]\) exactly when
\({\protect\hypertarget{mathscope.occurrence.302}{\protect\hyperlink{mathscope.symbol.48fe6ffe25e5deb2}{\iota}}\protect\zlabel{mathscope.occurrence.302}}
(a,b)=1\).
Then
\({\protect\hypertarget{mathscope.occurrence.305}{\protect\hyperlink{mathscope.symbol.d0eeb1f9202e54ce}{\widehat{\mathcal N}}}\protect\zlabel{mathscope.occurrence.305}}
(S_{g,n})\) is connected.
\end{localfact}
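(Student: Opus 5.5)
This is a standard fact, due to Farb and Margalit (Primer, Theorem 4.3 and the proof of Lemma 4.5 on the modified nonseparating complex). The plan is to reprove it by the usual "connectivity via generators" strategy.

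\textbf{Step 1: the complex has no edge-free components.} The mapping class group \(\mathrm{Mod}(S_{g,n})\) acts on \(\widehat{\mathcal N}(S_{g,n})\) by simplicial automorphisms, since it preserves nonseparating curves and geometric intersection number. By the change-of-coordinates principle, this action is transitive on vertices: any two nonseparating curves have homeomorphic complements. Fix a base vertex \(a\). Transitivity means it suffices to show that, for every \(f\) in a generating set \(\{f_k\}\) of \(\mathrm{Mod}(S_{g,n})\), the vertex \(f(a)\) lies in the same component as \(a\).

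\textbf{Step 2: reduce to generators.} Suppose Step 1's claim holds for each generator \(f_k^{\pm1}\). Write \(h=f_{k_1}^{\pm1}\cdots f_{k_m}^{\pm1}\), and consider the chain
\[
a,\ f_{k_1}^{\pm1}(a),\ f_{k_1}^{\pm1}f_{k_2}^{\pm1}(a),\ \ldots
\]
Consecutive terms are images, under an element of \(\mathrm{Mod}(S_{g,n})\), of the pair \(a,\,f_{k_i}^{\pm1}(a)\). These two vertices lie in a common component, and that property is preserved by automorphisms. Hence \(h(a)\) lies in the component of \(a\).

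\textbf{Step 3: choose explicit generators.} Use the Humphries/Lickorish generators together with the half-twists or Dehn twists needed for the punctures. Concretely, use the Dehn twists about the chain curves \(c_0,c_1,\dots,c_{2g}\) and the curves surrounding punctures. All of these are nonseparating curves for \(g\ge2\), except possibly twists about curves bounding punctured disks; for those, any separating generator can be replaced by a product of twists about nonseparating curves. Take \(a=c_1\). For a generator \(t_c\):
\begin{itemize}
\item If \(\iota(a,c)=0\), then \(t_c^{\pm1}(a)=a\).
\item If \(\iota(a,c)=1\), then \(t_c^{\pm1}(a)\) meets \(c\) once, so \(a,\;c,\;t_c^{\pm1}(a)\) is a path of length two.
\end{itemize}
Arranging all generators to meet \(a\) at most once handles everything.

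\textbf{Main obstacle.} The delicate point is arranging a generating set of nonseparating twists each meeting the base curve at most once in the punctured case. The Lickorish/Humphries chain with punctures placed in one complementary region should achieve this. Any remaining generator \(c\) with \(\iota(a,c)\ge2\) needs a separate path, built from an auxiliary nonseparating curve meeting both \(a\) and \(t_c(a)\) once. That construction is possible because \(g\ge2\) leaves room.

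Since the paper cites this as a known fact, in the text I would simply invoke Farb--Margalit rather than redo the argument.
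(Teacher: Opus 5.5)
For the paper itself you do exactly what the author does: the paper's proof is only the citation of Farb--Margalit, Lemma~4.5.

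Your sketched reproof takes a different route from Farb--Margalit's own argument. They reduce, via connectivity of the nonseparating curve complex $\mathcal N(S)$ (which they prove by elementary surgery arguments), to the case of two disjoint nonseparating curves. Such curves always have a common neighbour in $\widehat{\mathcal N}$. You instead use Putman's ``connectivity via generators'' trick. That is very short once a good generating set is available, but it uses a much deeper input: the Dehn--Lickorish--Humphries theorem, and for $n>0$ a generating set of the pure mapping class group. In the Primer, connectivity of $\widehat{\mathcal N}$ is itself one of the ingredients in the proof of the Dehn--Lickorish theorem. So your argument is non-circular only if generation by twists is taken from a proof that does not rest on this statement. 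Farb--Margalit's route is elementary and self-contained.

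Two points in your sketch need repair.

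\emph{The punctured case.} It is easier than your ``main obstacle'' suggests. Work with $\mathrm{PMod}(S_{g,n})$. It is still transitive on nonseparating curves, because the complement of such a curve is connected and contains all punctures; so no half-twists are needed. Build the generators inductively from the Birman exact sequence. Start with the Humphries twists. When passing from $k-1$ to $k$ punctures, with new puncture $p$, add the two twists $t_{\gamma_1},t_{\gamma_2}$ for each loop $\gamma$ in a standard simple generating set of $\pi_1(S_{g,k-1},p)$; here $\operatorname{Push}(\gamma)=t_{\gamma_1}t_{\gamma_2}^{-1}$. If all punctures are placed away from $a=c_1$, each of these curves is disjoint from $c_1$ or meets it once. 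Whether the generating curves are nonseparating is irrelevant: a twist about \emph{any} curve disjoint from $a$ fixes $a$. So replacing separating twists by products of nonseparating ones is unnecessary, and it could destroy the ``meets $a$ at most once'' property.

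\emph{The fallback.} For a generator with $\iota(a,c)\geq 2$, your fallback is unsupported. You give no reason why $a$ and $t_c(a)$, which satisfy $\iota(a,t_c(a))=\iota(a,c)^2$, should have a common neighbour, and ``$g\geq 2$ leaves room'' is not an argument. A path $a,d,t_c(a)$ does exist whenever some $d$ disjoint from $c$ meets $a$ once, since then $t_c(d)=d$; but you would still have to produce such a $d$. With the generating set above the fallback is never needed, and your argument goes through.

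A minor point: when $\iota(a,c)=1$ one has $\iota(a,t_c^{\pm1}(a))=1$, so these two vertices are already adjacent.
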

\begin{proof}
This is \textcite[Lemma~4.5]{FarbMargalit2012}.
\end{proof}

\begin{lemma}[Intersection-One Character Dichotomy]
\label{lem:intersection-one-dichotomy}
Let \({\protect\hypertarget{mathscope.occurrence.307}{\protect\hyperlink{mathscope.symbol.e5d086605f4f771b}{\Sigma}}\protect\zlabel{mathscope.occurrence.307}}\) be a compact oriented connected surface of genus \({\protect\hypertarget{mathscope.occurrence.308}{\protect\hyperlink{mathscope.symbol.cfb322816069d93a}{g}}\protect\zlabel{mathscope.occurrence.308}}\geq3\) with zero or one boundary component, and let \({\protect\hypertarget{mathscope.occurrence.309}{\protect\hyperlink{mathscope.symbol.828377fa8e00251e}{\chi}}\protect\zlabel{mathscope.occurrence.309}}\colon{\protect\hypertarget{mathscope.occurrence.310}{\protect\hyperlink{mathscope.symbol.2fb732ea957dfec0}{\mathcal{S}(\Sigma,\mathbb{Q}(A))}}\protect\zlabel{mathscope.occurrence.310}}\to{\protect\hypertarget{mathscope.occurrence.311}{\protect\hyperlink{mathscope.symbol.793c1bdec545e306}{\mathbb{Q}(A)}}\protect\zlabel{mathscope.occurrence.311}}\) be a character.

For this character, use the following normalized curve values.
For every nonseparating simple closed curve \({\protect\hypertarget{mathscope.occurrence.312}{\protect\hyperlink{mathscope.symbol.cc40d5e6cfccd882}{\alpha}}\protect\zlabel{mathscope.occurrence.312}}\),
set
\begin{equation}\label{eq:normalized-values}
  {\protect\hypertarget{mathscope.occurrence.313}{\protect\hyperlink{mathscope.symbol.a99c17a603551b94}{Y_\alpha}}\protect\zlabel{mathscope.occurrence.313}}=\frac{{\protect\hypertarget{mathscope.occurrence.314}{\protect\hyperlink{mathscope.symbol.cc40d5e6cfccd882}{\alpha}}\protect\zlabel{mathscope.occurrence.314}}}{{\protect\hypertarget{mathscope.occurrence.315}{\protect\hyperlink{mathscope.symbol.a8c0b3ceba5c59dd}{A}}\protect\zlabel{mathscope.occurrence.315}}^2-{\protect\hypertarget{mathscope.occurrence.316}{\protect\hyperlink{mathscope.symbol.a8c0b3ceba5c59dd}{A}}\protect\zlabel{mathscope.occurrence.316}}^{-2}},
  \qquad {\protect\hypertarget{mathscope.occurrence.317}{\protect\hyperlink{mathscope.symbol.a0df08ddf9fe3f71}{y_\alpha}}\protect\zlabel{mathscope.occurrence.317}}={\protect\hypertarget{mathscope.occurrence.318}{\protect\hyperlink{mathscope.symbol.828377fa8e00251e}{\chi}}\protect\zlabel{mathscope.occurrence.318}}({\protect\hypertarget{mathscope.occurrence.319}{\protect\hyperlink{mathscope.symbol.a99c17a603551b94}{Y_\alpha}}\protect\zlabel{mathscope.occurrence.319}}),
  \qquad {\protect\hypertarget{mathscope.occurrence.320}{\protect\hyperlink{mathscope.symbol.a76eff07b1e03bd8}{d}}\protect\zlabel{mathscope.occurrence.320}}={\protect\hypertarget{mathscope.occurrence.321}{\protect\hyperlink{mathscope.symbol.a8c0b3ceba5c59dd}{A}}\protect\zlabel{mathscope.occurrence.321}}-{\protect\hypertarget{mathscope.occurrence.322}{\protect\hyperlink{mathscope.symbol.a8c0b3ceba5c59dd}{A}}\protect\zlabel{mathscope.occurrence.322}}^{-1},
\end{equation}
Then exactly one of the following holds:
\begin{enumerate}[label=(\roman*)]
  \item\label{case:zero} \({\protect\hypertarget{mathscope.occurrence.323}{\protect\hyperlink{mathscope.symbol.a0df08ddf9fe3f71}{y_\alpha}}\protect\zlabel{mathscope.occurrence.323}}=0\) for every nonseparating \({\protect\hypertarget{mathscope.occurrence.324}{\protect\hyperlink{mathscope.symbol.cc40d5e6cfccd882}{\alpha}}\protect\zlabel{mathscope.occurrence.324}}\);
  \item\label{case:nonzero} \({\protect\hypertarget{mathscope.occurrence.325}{\protect\hyperlink{mathscope.symbol.a0df08ddf9fe3f71}{y_\alpha}}\protect\zlabel{mathscope.occurrence.325}}\in\{1,-1\}{\protect\hypertarget{mathscope.occurrence.326}{\protect\hyperlink{mathscope.symbol.a76eff07b1e03bd8}{d}}\protect\zlabel{mathscope.occurrence.326}}^{-1}\) for every nonseparating \({\protect\hypertarget{mathscope.occurrence.327}{\protect\hyperlink{mathscope.symbol.cc40d5e6cfccd882}{\alpha}}\protect\zlabel{mathscope.occurrence.327}}\).
\end{enumerate}
\end{lemma}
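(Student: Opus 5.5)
The plan is to extract, from a single pair of curves meeting once, a polynomial identity that forces the dichotomy locally. I would then propagate it across all nonseparating curves using the connectivity Fact of Farb--Margalit.

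\emph{Step 1 (intersection-one identity).} Let \(\alpha,\beta\) be nonseparating simple closed curves with \(\iota(\alpha,\beta)=1\). Resolving the single crossing with the Kauffman relation gives
\[
  \alpha\beta=A\gamma+A^{-1}\delta,\qquad \beta\alpha=A^{-1}\gamma+A\delta,
\]
where \(\{\gamma,\delta\}=\{t_\alpha(\beta),t_\alpha^{-1}(\beta)\}\). Which one is which depends on the convention of Santharoubane's Figure~1, but the argument below is symmetric in \(\gamma\) and \(\delta\), so the convention will not matter. Both \(\gamma\) and \(\delta\) are nonseparating, being images of \(\beta\) under mapping classes. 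From these two equations,
\[
  A\alpha\beta-A^{-1}\beta\alpha=(A^2-A^{-2})\gamma,
  \qquad
  A^{-1}\alpha\beta-A\beta\alpha=(A^{-2}-A^{2})\delta .
\]
Now apply \(\chi\). Its target is commutative, so \(\chi(\alpha\beta)=\chi(\beta\alpha)\). Dividing by \((A^2-A^{-2})^2\) and using the normalization \eqref{eq:normalized-values} turns both identities into the same relation:
\[
  y_{t_\alpha(\beta)}=y_{t_\alpha^{-1}(\beta)}=d\,y_\alpha y_\beta .
\]

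\emph{Step 2 (local dichotomy).} Put \(\gamma'=t_\alpha(\beta)\). Then \(\iota(\alpha,\gamma')=1\) and \(t_\alpha^{-1}(\gamma')=\beta\). Applying Step 1 to the pair \((\alpha,\gamma')\) gives
\[
  y_\beta=d\,y_\alpha y_{\gamma'}=d^2y_\alpha^2y_\beta ,
  \qquad\text{so}\qquad
  y_\beta\,(1-d^2y_\alpha^2)=0 .
\]
Exchanging the roles of the two curves gives \(y_\alpha(1-d^2y_\beta^2)=0\). Consequently, along any edge of \(\widehat{\mathcal N}\), \(y_\alpha=0\) holds if and only if \(y_\beta=0\). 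Moreover, if both values are nonzero, then \(y_\alpha,y_\beta\in\{1,-1\}d^{-1}\).

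\emph{Step 3 (globalize).} The interior of \(\Sigma\) is \(S_{g,0}\) or \(S_{g,1}\). Isotopy classes of nonseparating curves in \(\Sigma\), and their geometric intersection numbers, agree with those in the interior: for one boundary circle, a collar identifies the boundary with a puncture. So the Fact of Farb--Margalit applies, and the intersection-one graph on nonseparating curves is connected. It has more than one vertex, so every vertex has a neighbor. By Step 2, the zero set of \(\alpha\mapsto y_\alpha\) is a union of connected components of this graph, hence either everything or nothing.

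If it is everything, case~\ref{case:zero} holds. Otherwise every \(y_\alpha\) is nonzero. Each \(\alpha\) has a neighbor \(\beta\), also with \(y_\beta\neq0\), and Step 2 gives \(y_\alpha=\pm d^{-1}\), which is case~\ref{case:nonzero}. The two cases are mutually exclusive because \(\pm d^{-1}\neq0\) in \(\mathbb Q(A)\).

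I expect the main obstacle to be Step 1. Two points need care there: pinning down the skein resolution of a one-crossing product and its identification with \(t_\alpha^{\pm1}(\beta)\) under the paper's twist and crossing conventions, and checking the normalization factors carefully. The observation that both resolutions yield the same value \(d\,y_\alpha y_\beta\) is what makes the sign conventions harmless.
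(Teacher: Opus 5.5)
Your proposal is correct and follows essentially the same route as the paper: you combine the intersection-one identity \(y_{t_\alpha^{\pm1}(\beta)}=d\,y_\alpha y_\beta\) with \(t_\alpha^{-1}t_\alpha=\mathrm{id}\) to get \(y_\beta(d^2y_\alpha^2-1)=0\) and its mirror, and then Farb--Margalit connectivity globalizes the dichotomy. The differences are cosmetic: you derive the twist formula directly from the one-crossing Kauffman resolution, using its symmetry in \(\gamma,\delta\) to make the twist-direction convention irrelevant, instead of citing Santharoubane's Lemma~2.2; and you apply \(\chi\) before composing the twists, whereas the paper first forms the nested \(A\)-commutator \([[Y_\alpha,Y_\beta]_A,Y_\alpha]_A\) in the algebra.
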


\begin{proof}

Let \({\protect\hypertarget{mathscope.occurrence.328}{\protect\hyperlink{mathscope.symbol.cc40d5e6cfccd882}{\alpha}}\protect\zlabel{mathscope.occurrence.328}}\subset{\protect\hypertarget{mathscope.occurrence.329}{\protect\hyperlink{mathscope.symbol.e5d086605f4f771b}{\Sigma}}\protect\zlabel{mathscope.occurrence.329}}\) be an
arbitrary nonseparating simple closed curve.
Cutting \({\protect\hypertarget{mathscope.occurrence.330}{\protect\hyperlink{mathscope.symbol.e5d086605f4f771b}{\Sigma}}\protect\zlabel{mathscope.occurrence.330}}\) along
\({\protect\hypertarget{mathscope.occurrence.331}{\protect\hyperlink{mathscope.symbol.cc40d5e6cfccd882}{\alpha}}\protect\zlabel{mathscope.occurrence.331}}\) leaves a connected surface with two boundary
copies of \({\protect\hypertarget{mathscope.occurrence.332}{\protect\hyperlink{mathscope.symbol.cc40d5e6cfccd882}{\alpha}}\protect\zlabel{mathscope.occurrence.332}}\).
Choose an embedded arc in the cut surface whose endpoints are corresponding
points on the two boundary copies and whose interior is disjoint from the
boundary.
After regluing, this arc closes to a simple closed curve
\({\protect\hypertarget{mathscope.occurrence.333}{\protect\hyperlink{mathscope.symbol.344cc37043ef85d7}{\beta}}\protect\zlabel{mathscope.occurrence.333}}\) satisfying
\({\protect\hypertarget{mathscope.occurrence.334}{\protect\hyperlink{mathscope.symbol.48fe6ffe25e5deb2}{\iota}}\protect\zlabel{mathscope.occurrence.334}}({\protect\hypertarget{mathscope.occurrence.335}{\protect\hyperlink{mathscope.symbol.cc40d5e6cfccd882}{\alpha}}\protect\zlabel{mathscope.occurrence.335}},{\protect\hypertarget{mathscope.occurrence.336}{\protect\hyperlink{mathscope.symbol.344cc37043ef85d7}{\beta}}\protect\zlabel{mathscope.occurrence.336}})=1\).
The curve \({\protect\hypertarget{mathscope.occurrence.337}{\protect\hyperlink{mathscope.symbol.344cc37043ef85d7}{\beta}}\protect\zlabel{mathscope.occurrence.337}}\) is nonseparating, because a separating
curve has zero mod-\(2\) algebraic intersection with every closed curve,
whereas \({\protect\hypertarget{mathscope.occurrence.338}{\protect\hyperlink{mathscope.symbol.344cc37043ef85d7}{\beta}}\protect\zlabel{mathscope.occurrence.338}}\) has mod-\(2\) intersection one with
\({\protect\hypertarget{mathscope.occurrence.339}{\protect\hyperlink{mathscope.symbol.cc40d5e6cfccd882}{\alpha}}\protect\zlabel{mathscope.occurrence.339}}\).
Define \({\protect\hypertarget{mathscope.occurrence.340}{\protect\hyperlink{mathscope.symbol.a99c17a603551b94}{Y_\beta}}\protect\zlabel{mathscope.occurrence.340}}\) by
\cref{eq:normalized-values} with \({\protect\hypertarget{mathscope.occurrence.341}{\protect\hyperlink{mathscope.symbol.344cc37043ef85d7}{\beta}}\protect\zlabel{mathscope.occurrence.341}}\) in place of
\({\protect\hypertarget{mathscope.occurrence.342}{\protect\hyperlink{mathscope.symbol.cc40d5e6cfccd882}{\alpha}}\protect\zlabel{mathscope.occurrence.342}}\).
For this pair, the twist formula from
\revdel{\textcite[Lemma~3.2]{Santharoubane2024} gives}
\revadd{\textcite[Lemma~2.2]{Santharoubane2024} gives}
\begin{equation}\label{eq:intersection-one-twist}
\begin{aligned}
  {\protect\hypertarget{mathscope.occurrence.343}{\protect\hyperlink{mathscope.symbol.5e984aafe8446c78}{t_\alpha}}\protect\zlabel{mathscope.occurrence.343}}({\protect\hypertarget{mathscope.occurrence.344}{\protect\hyperlink{mathscope.symbol.a99c17a603551b94}{Y_\beta}}\protect\zlabel{mathscope.occurrence.344}})
    ={\protect\hypertarget{mathscope.occurrence.345}{\protect\hyperlink{mathscope.symbol.8714a4290c67cc7b}{[Y_\alpha,Y_\beta]_A}}\protect\zlabel{mathscope.occurrence.345}},
  \qquad
  {\protect\hypertarget{mathscope.occurrence.346}{\protect\hyperlink{mathscope.symbol.5e984aafe8446c78}{t_\alpha^{-1}}}\protect\zlabel{mathscope.occurrence.346}}({\protect\hypertarget{mathscope.occurrence.347}{\protect\hyperlink{mathscope.symbol.a99c17a603551b94}{Y_\beta}}\protect\zlabel{mathscope.occurrence.347}})
    ={\protect\hypertarget{mathscope.occurrence.348}{\protect\hyperlink{mathscope.symbol.8714a4290c67cc7b}{[Y_\beta,Y_\alpha]_A}}\protect\zlabel{mathscope.occurrence.348}}.
\end{aligned}
\end{equation}
Since \({\protect\hypertarget{mathscope.occurrence.349}{\protect\hyperlink{mathscope.symbol.5e984aafe8446c78}{t_\alpha}}\protect\zlabel{mathscope.occurrence.349}}\) acts by algebra automorphisms, fixes
\({\protect\hypertarget{mathscope.occurrence.350}{\protect\hyperlink{mathscope.symbol.a99c17a603551b94}{Y_\alpha}}\protect\zlabel{mathscope.occurrence.350}}\), and satisfies
\({\protect\hypertarget{mathscope.occurrence.351}{\protect\hyperlink{mathscope.symbol.5e984aafe8446c78}{t_\alpha}}\protect\zlabel{mathscope.occurrence.351}}{\protect\hypertarget{mathscope.occurrence.352}{\protect\hyperlink{mathscope.symbol.5e984aafe8446c78}{t_\alpha^{-1}}}\protect\zlabel{mathscope.occurrence.352}}
={\protect\hypertarget{mathscope.occurrence.353}{\protect\hyperlink{mathscope.symbol.795ef3503c90622a}{\mathrm{id}}}\protect\zlabel{mathscope.occurrence.353}}\), it preserves the
\({\protect\hypertarget{mathscope.occurrence.354}{\protect\hyperlink{mathscope.symbol.a8c0b3ceba5c59dd}{A}}\protect\zlabel{mathscope.occurrence.354}}\)-commutator:
\({\protect\hypertarget{mathscope.occurrence.355}{\protect\hyperlink{mathscope.symbol.5e984aafe8446c78}{t_\alpha}}\protect\zlabel{mathscope.occurrence.355}}([{\protect\hypertarget{mathscope.occurrence.356}{\protect\hyperlink{mathscope.symbol.c93984e69518acb2}{X}}\protect\zlabel{mathscope.occurrence.356}},
{\protect\hypertarget{mathscope.occurrence.357}{\protect\hyperlink{mathscope.symbol.0e97b6cdc52bf93a}{Y}}\protect\zlabel{mathscope.occurrence.357}}]_{{\protect\hypertarget{mathscope.occurrence.358}{\protect\hyperlink{mathscope.symbol.a8c0b3ceba5c59dd}{A}}\protect\zlabel{mathscope.occurrence.358}}})
=[{\protect\hypertarget{mathscope.occurrence.359}{\protect\hyperlink{mathscope.symbol.5e984aafe8446c78}{t_\alpha}}\protect\zlabel{mathscope.occurrence.359}}({\protect\hypertarget{mathscope.occurrence.360}{\protect\hyperlink{mathscope.symbol.c93984e69518acb2}{X}}\protect\zlabel{mathscope.occurrence.360}}),
{\protect\hypertarget{mathscope.occurrence.361}{\protect\hyperlink{mathscope.symbol.5e984aafe8446c78}{t_\alpha}}\protect\zlabel{mathscope.occurrence.361}}({\protect\hypertarget{mathscope.occurrence.362}{\protect\hyperlink{mathscope.symbol.0e97b6cdc52bf93a}{Y}}\protect\zlabel{mathscope.occurrence.362}})]_
{{\protect\hypertarget{mathscope.occurrence.363}{\protect\hyperlink{mathscope.symbol.a8c0b3ceba5c59dd}{A}}\protect\zlabel{mathscope.occurrence.363}}}\).
Consequently,
\begin{align}
  {\protect\hypertarget{mathscope.occurrence.364}{\protect\hyperlink{mathscope.symbol.a99c17a603551b94}{Y_\beta}}\protect\zlabel{mathscope.occurrence.364}}
  &={\protect\hypertarget{mathscope.occurrence.365}{\protect\hyperlink{mathscope.symbol.5e984aafe8446c78}{t_\alpha}}\protect\zlabel{mathscope.occurrence.365}} {\protect\hypertarget{mathscope.occurrence.366}{\protect\hyperlink{mathscope.symbol.5e984aafe8446c78}{t_\alpha^{-1}}}\protect\zlabel{mathscope.occurrence.366}}({\protect\hypertarget{mathscope.occurrence.367}{\protect\hyperlink{mathscope.symbol.a99c17a603551b94}{Y_\beta}}\protect\zlabel{mathscope.occurrence.367}})\notag\\
  &={\protect\hypertarget{mathscope.occurrence.368}{\protect\hyperlink{mathscope.symbol.5e984aafe8446c78}{t_\alpha}}\protect\zlabel{mathscope.occurrence.368}}\bigl(
    {\protect\hypertarget{mathscope.occurrence.369}{\protect\hyperlink{mathscope.symbol.8714a4290c67cc7b}{[Y_\beta,Y_\alpha]_A}}\protect\zlabel{mathscope.occurrence.369}}\bigr)\notag\\
  &=[{\protect\hypertarget{mathscope.occurrence.370}{\protect\hyperlink{mathscope.symbol.5e984aafe8446c78}{t_\alpha}}\protect\zlabel{mathscope.occurrence.370}}({\protect\hypertarget{mathscope.occurrence.371}{\protect\hyperlink{mathscope.symbol.a99c17a603551b94}{Y_\beta}}\protect\zlabel{mathscope.occurrence.371}}),
      {\protect\hypertarget{mathscope.occurrence.372}{\protect\hyperlink{mathscope.symbol.5e984aafe8446c78}{t_\alpha}}\protect\zlabel{mathscope.occurrence.372}}({\protect\hypertarget{mathscope.occurrence.373}{\protect\hyperlink{mathscope.symbol.a99c17a603551b94}{Y_\alpha}}\protect\zlabel{mathscope.occurrence.373}})]_
      {{\protect\hypertarget{mathscope.occurrence.374}{\protect\hyperlink{mathscope.symbol.a8c0b3ceba5c59dd}{A}}\protect\zlabel{mathscope.occurrence.374}}}\notag\\
  &={\protect\hypertarget{mathscope.occurrence.375}{\protect\hyperlink{mathscope.symbol.8714a4290c67cc7b}{[[Y_\alpha,Y_\beta]_A,Y_\alpha]_A}}\protect\zlabel{mathscope.occurrence.375}}.
\label{eq:nested-commutator}
\end{align}
The four equalities use, respectively, the inverse identity, the second
formula in \cref{eq:intersection-one-twist}, preservation of the
\({\protect\hypertarget{mathscope.occurrence.376}{\protect\hyperlink{mathscope.symbol.a8c0b3ceba5c59dd}{A}}\protect\zlabel{mathscope.occurrence.376}}\)-commutator, and the first formula in
\cref{eq:intersection-one-twist} together with
\({\protect\hypertarget{mathscope.occurrence.377}{\protect\hyperlink{mathscope.symbol.5e984aafe8446c78}{t_\alpha}}\protect\zlabel{mathscope.occurrence.377}}({\protect\hypertarget{mathscope.occurrence.378}{\protect\hyperlink{mathscope.symbol.a99c17a603551b94}{Y_\alpha}}\protect\zlabel{mathscope.occurrence.378}})
={\protect\hypertarget{mathscope.occurrence.379}{\protect\hyperlink{mathscope.symbol.a99c17a603551b94}{Y_\alpha}}\protect\zlabel{mathscope.occurrence.379}}\).
To apply the character \({\protect\hypertarget{mathscope.occurrence.380}{\protect\hyperlink{mathscope.symbol.828377fa8e00251e}{\chi}}\protect\zlabel{mathscope.occurrence.380}}\) to
\cref{eq:nested-commutator}, first observe that, for any
\({\protect\hypertarget{mathscope.occurrence.381}{\protect\hyperlink{mathscope.symbol.c93984e69518acb2}{X}}\protect\zlabel{mathscope.occurrence.381}},{\protect\hypertarget{mathscope.occurrence.382}{\protect\hyperlink{mathscope.symbol.0e97b6cdc52bf93a}{Y}}\protect\zlabel{mathscope.occurrence.382}}
\in{\protect\hypertarget{mathscope.occurrence.383}{\protect\hyperlink{mathscope.symbol.2fb732ea957dfec0}{\mathcal{S}(\Sigma,\mathbb{Q}(A))}}\protect\zlabel{mathscope.occurrence.383}}\), commutativity of the target gives
\begin{align}
  {\protect\hypertarget{mathscope.occurrence.384}{\protect\hyperlink{mathscope.symbol.828377fa8e00251e}{\chi}}\protect\zlabel{mathscope.occurrence.384}}
  \bigl([{\protect\hypertarget{mathscope.occurrence.385}{\protect\hyperlink{mathscope.symbol.c93984e69518acb2}{X}}\protect\zlabel{mathscope.occurrence.385}},{\protect\hypertarget{mathscope.occurrence.386}{\protect\hyperlink{mathscope.symbol.0e97b6cdc52bf93a}{Y}}\protect\zlabel{mathscope.occurrence.386}}]_
    {{\protect\hypertarget{mathscope.occurrence.387}{\protect\hyperlink{mathscope.symbol.a8c0b3ceba5c59dd}{A}}\protect\zlabel{mathscope.occurrence.387}}}\bigr)
  &=
  {\protect\hypertarget{mathscope.occurrence.388}{\protect\hyperlink{mathscope.symbol.828377fa8e00251e}{\chi}}\protect\zlabel{mathscope.occurrence.388}}\bigl(
    {\protect\hypertarget{mathscope.occurrence.389}{\protect\hyperlink{mathscope.symbol.a8c0b3ceba5c59dd}{A}}\protect\zlabel{mathscope.occurrence.389}}{\protect\hypertarget{mathscope.occurrence.390}{\protect\hyperlink{mathscope.symbol.c93984e69518acb2}{X}}\protect\zlabel{mathscope.occurrence.390}}
    {\protect\hypertarget{mathscope.occurrence.391}{\protect\hyperlink{mathscope.symbol.0e97b6cdc52bf93a}{Y}}\protect\zlabel{mathscope.occurrence.391}}
    -{\protect\hypertarget{mathscope.occurrence.392}{\protect\hyperlink{mathscope.symbol.a8c0b3ceba5c59dd}{A}}\protect\zlabel{mathscope.occurrence.392}}^{-1}{\protect\hypertarget{mathscope.occurrence.393}{\protect\hyperlink{mathscope.symbol.0e97b6cdc52bf93a}{Y}}\protect\zlabel{mathscope.occurrence.393}}
    {\protect\hypertarget{mathscope.occurrence.394}{\protect\hyperlink{mathscope.symbol.c93984e69518acb2}{X}}\protect\zlabel{mathscope.occurrence.394}}\bigr)\notag\\
  &=
  {\protect\hypertarget{mathscope.occurrence.395}{\protect\hyperlink{mathscope.symbol.a8c0b3ceba5c59dd}{A}}\protect\zlabel{mathscope.occurrence.395}}
    {\protect\hypertarget{mathscope.occurrence.396}{\protect\hyperlink{mathscope.symbol.828377fa8e00251e}{\chi}}\protect\zlabel{mathscope.occurrence.396}}({\protect\hypertarget{mathscope.occurrence.397}{\protect\hyperlink{mathscope.symbol.c93984e69518acb2}{X}}\protect\zlabel{mathscope.occurrence.397}})
    {\protect\hypertarget{mathscope.occurrence.398}{\protect\hyperlink{mathscope.symbol.828377fa8e00251e}{\chi}}\protect\zlabel{mathscope.occurrence.398}}({\protect\hypertarget{mathscope.occurrence.399}{\protect\hyperlink{mathscope.symbol.0e97b6cdc52bf93a}{Y}}\protect\zlabel{mathscope.occurrence.399}})
  -{\protect\hypertarget{mathscope.occurrence.400}{\protect\hyperlink{mathscope.symbol.a8c0b3ceba5c59dd}{A}}\protect\zlabel{mathscope.occurrence.400}}^{-1}
    {\protect\hypertarget{mathscope.occurrence.401}{\protect\hyperlink{mathscope.symbol.828377fa8e00251e}{\chi}}\protect\zlabel{mathscope.occurrence.401}}({\protect\hypertarget{mathscope.occurrence.402}{\protect\hyperlink{mathscope.symbol.0e97b6cdc52bf93a}{Y}}\protect\zlabel{mathscope.occurrence.402}})
    {\protect\hypertarget{mathscope.occurrence.403}{\protect\hyperlink{mathscope.symbol.828377fa8e00251e}{\chi}}\protect\zlabel{mathscope.occurrence.403}}({\protect\hypertarget{mathscope.occurrence.404}{\protect\hyperlink{mathscope.symbol.c93984e69518acb2}{X}}\protect\zlabel{mathscope.occurrence.404}})\notag\\
  &=
  ({\protect\hypertarget{mathscope.occurrence.405}{\protect\hyperlink{mathscope.symbol.a8c0b3ceba5c59dd}{A}}\protect\zlabel{mathscope.occurrence.405}}-{\protect\hypertarget{mathscope.occurrence.406}{\protect\hyperlink{mathscope.symbol.a8c0b3ceba5c59dd}{A}}\protect\zlabel{mathscope.occurrence.406}}^{-1})
    {\protect\hypertarget{mathscope.occurrence.407}{\protect\hyperlink{mathscope.symbol.828377fa8e00251e}{\chi}}\protect\zlabel{mathscope.occurrence.407}}({\protect\hypertarget{mathscope.occurrence.408}{\protect\hyperlink{mathscope.symbol.c93984e69518acb2}{X}}\protect\zlabel{mathscope.occurrence.408}})
    {\protect\hypertarget{mathscope.occurrence.409}{\protect\hyperlink{mathscope.symbol.828377fa8e00251e}{\chi}}\protect\zlabel{mathscope.occurrence.409}}({\protect\hypertarget{mathscope.occurrence.410}{\protect\hyperlink{mathscope.symbol.0e97b6cdc52bf93a}{Y}}\protect\zlabel{mathscope.occurrence.410}})\notag\\
  &=
  {\protect\hypertarget{mathscope.occurrence.411}{\protect\hyperlink{mathscope.symbol.a76eff07b1e03bd8}{d}}\protect\zlabel{mathscope.occurrence.411}}\,
    {\protect\hypertarget{mathscope.occurrence.412}{\protect\hyperlink{mathscope.symbol.828377fa8e00251e}{\chi}}\protect\zlabel{mathscope.occurrence.412}}({\protect\hypertarget{mathscope.occurrence.413}{\protect\hyperlink{mathscope.symbol.c93984e69518acb2}{X}}\protect\zlabel{mathscope.occurrence.413}})
    {\protect\hypertarget{mathscope.occurrence.414}{\protect\hyperlink{mathscope.symbol.828377fa8e00251e}{\chi}}\protect\zlabel{mathscope.occurrence.414}}({\protect\hypertarget{mathscope.occurrence.415}{\protect\hyperlink{mathscope.symbol.0e97b6cdc52bf93a}{Y}}\protect\zlabel{mathscope.occurrence.415}}).
  \label{eq:character-q-commutator}
\end{align}
Applying \cref{eq:character-q-commutator} first to the outer commutator and
then to the inner commutator yields
\begin{align}
  0
  &={\protect\hypertarget{mathscope.occurrence.416}{\protect\hyperlink{mathscope.symbol.828377fa8e00251e}{\chi}}\protect\zlabel{mathscope.occurrence.416}}\bigl(
      {\protect\hypertarget{mathscope.occurrence.417}{\protect\hyperlink{mathscope.symbol.8714a4290c67cc7b}{[[Y_\alpha,Y_\beta]_A,Y_\alpha]_A}}\protect\zlabel{mathscope.occurrence.417}}
      -{\protect\hypertarget{mathscope.occurrence.418}{\protect\hyperlink{mathscope.symbol.a99c17a603551b94}{Y_\beta}}\protect\zlabel{mathscope.occurrence.418}}\bigr)\notag\\
  &={\protect\hypertarget{mathscope.occurrence.419}{\protect\hyperlink{mathscope.symbol.a76eff07b1e03bd8}{d}}\protect\zlabel{mathscope.occurrence.419}}\,
      {\protect\hypertarget{mathscope.occurrence.420}{\protect\hyperlink{mathscope.symbol.828377fa8e00251e}{\chi}}\protect\zlabel{mathscope.occurrence.420}}
      \bigl({\protect\hypertarget{mathscope.occurrence.421}{\protect\hyperlink{mathscope.symbol.8714a4290c67cc7b}{[Y_\alpha,Y_\beta]_A}}\protect\zlabel{mathscope.occurrence.421}}\bigr)
      {\protect\hypertarget{mathscope.occurrence.422}{\protect\hyperlink{mathscope.symbol.a0df08ddf9fe3f71}{y_\alpha}}\protect\zlabel{mathscope.occurrence.422}}
      -{\protect\hypertarget{mathscope.occurrence.423}{\protect\hyperlink{mathscope.symbol.a0df08ddf9fe3f71}{y_\beta}}\protect\zlabel{mathscope.occurrence.423}}\notag\\
  &={\protect\hypertarget{mathscope.occurrence.424}{\protect\hyperlink{mathscope.symbol.a76eff07b1e03bd8}{d}}\protect\zlabel{mathscope.occurrence.424}}^2
      {\protect\hypertarget{mathscope.occurrence.425}{\protect\hyperlink{mathscope.symbol.a0df08ddf9fe3f71}{y_\alpha}}\protect\zlabel{mathscope.occurrence.425}}^2
      {\protect\hypertarget{mathscope.occurrence.426}{\protect\hyperlink{mathscope.symbol.a0df08ddf9fe3f71}{y_\beta}}\protect\zlabel{mathscope.occurrence.426}}
      -{\protect\hypertarget{mathscope.occurrence.427}{\protect\hyperlink{mathscope.symbol.a0df08ddf9fe3f71}{y_\beta}}\protect\zlabel{mathscope.occurrence.427}}\notag\\
  &={\protect\hypertarget{mathscope.occurrence.428}{\protect\hyperlink{mathscope.symbol.a0df08ddf9fe3f71}{y_\beta}}\protect\zlabel{mathscope.occurrence.428}}
      \bigl({\protect\hypertarget{mathscope.occurrence.429}{\protect\hyperlink{mathscope.symbol.a76eff07b1e03bd8}{d}}\protect\zlabel{mathscope.occurrence.429}}^2
      {\protect\hypertarget{mathscope.occurrence.430}{\protect\hyperlink{mathscope.symbol.a0df08ddf9fe3f71}{y_\alpha}}\protect\zlabel{mathscope.occurrence.430}}^2-1\bigr).
  \label{eq:first-constraint}
\end{align}
Interchanging \({\protect\hypertarget{mathscope.occurrence.431}{\protect\hyperlink{mathscope.symbol.cc40d5e6cfccd882}{\alpha}}\protect\zlabel{mathscope.occurrence.431}}\) and \({\protect\hypertarget{mathscope.occurrence.432}{\protect\hyperlink{mathscope.symbol.344cc37043ef85d7}{\beta}}\protect\zlabel{mathscope.occurrence.432}}\) gives
\begin{equation}\label{eq:second-constraint}
  {\protect\hypertarget{mathscope.occurrence.433}{\protect\hyperlink{mathscope.symbol.a0df08ddf9fe3f71}{y_\alpha}}\protect\zlabel{mathscope.occurrence.433}}({\protect\hypertarget{mathscope.occurrence.434}{\protect\hyperlink{mathscope.symbol.a76eff07b1e03bd8}{d}}\protect\zlabel{mathscope.occurrence.434}}^2{\protect\hypertarget{mathscope.occurrence.435}{\protect\hyperlink{mathscope.symbol.a0df08ddf9fe3f71}{y_\beta}}\protect\zlabel{mathscope.occurrence.435}}^2-1)=0.
\end{equation}
Suppose first that \({\protect\hypertarget{mathscope.occurrence.436}{\protect\hyperlink{mathscope.symbol.a0df08ddf9fe3f71}{y_\alpha}}\protect\zlabel{mathscope.occurrence.436}}=0\).
Then
\cref{eq:first-constraint} reduces to
\(-{\protect\hypertarget{mathscope.occurrence.437}{\protect\hyperlink{mathscope.symbol.a0df08ddf9fe3f71}{y_\beta}}\protect\zlabel{mathscope.occurrence.437}}=0\), so
\({\protect\hypertarget{mathscope.occurrence.438}{\protect\hyperlink{mathscope.symbol.a0df08ddf9fe3f71}{y_\beta}}\protect\zlabel{mathscope.occurrence.438}}=0\).
Conversely, if
\({\protect\hypertarget{mathscope.occurrence.439}{\protect\hyperlink{mathscope.symbol.a0df08ddf9fe3f71}{y_\beta}}\protect\zlabel{mathscope.occurrence.439}}=0\), then
\cref{eq:second-constraint} reduces to
\(-{\protect\hypertarget{mathscope.occurrence.440}{\protect\hyperlink{mathscope.symbol.a0df08ddf9fe3f71}{y_\alpha}}\protect\zlabel{mathscope.occurrence.440}}=0\), so
\({\protect\hypertarget{mathscope.occurrence.441}{\protect\hyperlink{mathscope.symbol.a0df08ddf9fe3f71}{y_\alpha}}\protect\zlabel{mathscope.occurrence.441}}=0\).
If neither value vanishes, divide \cref{eq:first-constraint} by
\({\protect\hypertarget{mathscope.occurrence.442}{\protect\hyperlink{mathscope.symbol.a0df08ddf9fe3f71}{y_\beta}}\protect\zlabel{mathscope.occurrence.442}}\) and
\cref{eq:second-constraint} by \({\protect\hypertarget{mathscope.occurrence.443}{\protect\hyperlink{mathscope.symbol.a0df08ddf9fe3f71}{y_\alpha}}\protect\zlabel{mathscope.occurrence.443}}\) to obtain
\begin{equation}\label{eq:intersection-edge-squares}
  {\protect\hypertarget{mathscope.occurrence.444}{\protect\hyperlink{mathscope.symbol.a76eff07b1e03bd8}{d}}\protect\zlabel{mathscope.occurrence.444}}^2{\protect\hypertarget{mathscope.occurrence.445}{\protect\hyperlink{mathscope.symbol.a0df08ddf9fe3f71}{y_\alpha}}\protect\zlabel{mathscope.occurrence.445}}^2=1,
  \qquad
  {\protect\hypertarget{mathscope.occurrence.446}{\protect\hyperlink{mathscope.symbol.a76eff07b1e03bd8}{d}}\protect\zlabel{mathscope.occurrence.446}}^2{\protect\hypertarget{mathscope.occurrence.447}{\protect\hyperlink{mathscope.symbol.a0df08ddf9fe3f71}{y_\beta}}\protect\zlabel{mathscope.occurrence.447}}^2=1.
\end{equation}
Because \({\protect\hypertarget{mathscope.occurrence.448}{\protect\hyperlink{mathscope.symbol.a76eff07b1e03bd8}{d}}\protect\zlabel{mathscope.occurrence.448}}\neq0\) in \({\protect\hypertarget{mathscope.occurrence.449}{\protect\hyperlink{mathscope.symbol.793c1bdec545e306}{\mathbb{Q}(A)}}\protect\zlabel{mathscope.occurrence.449}}\), this is
equivalent to
\({\protect\hypertarget{mathscope.occurrence.450}{\protect\hyperlink{mathscope.symbol.a0df08ddf9fe3f71}{y_\alpha}}\protect\zlabel{mathscope.occurrence.450}}^2
={\protect\hypertarget{mathscope.occurrence.451}{\protect\hyperlink{mathscope.symbol.a0df08ddf9fe3f71}{y_\beta}}\protect\zlabel{mathscope.occurrence.451}}^2
={\protect\hypertarget{mathscope.occurrence.452}{\protect\hyperlink{mathscope.symbol.a76eff07b1e03bd8}{d}}\protect\zlabel{mathscope.occurrence.452}}^{-2}\).
Thus, whenever the nonseparating curves
\({\protect\hypertarget{mathscope.occurrence.453}{\protect\hyperlink{mathscope.symbol.cc40d5e6cfccd882}{\alpha}}\protect\zlabel{mathscope.occurrence.453}}\) and \({\protect\hypertarget{mathscope.occurrence.454}{\protect\hyperlink{mathscope.symbol.344cc37043ef85d7}{\beta}}\protect\zlabel{mathscope.occurrence.454}}\) satisfy
\({\protect\hypertarget{mathscope.occurrence.455}{\protect\hyperlink{mathscope.symbol.48fe6ffe25e5deb2}{\iota}}\protect\zlabel{mathscope.occurrence.455}}
({\protect\hypertarget{mathscope.occurrence.456}{\protect\hyperlink{mathscope.symbol.cc40d5e6cfccd882}{\alpha}}\protect\zlabel{mathscope.occurrence.456}},{\protect\hypertarget{mathscope.occurrence.457}{\protect\hyperlink{mathscope.symbol.344cc37043ef85d7}{\beta}}\protect\zlabel{mathscope.occurrence.457}})=1\), either
\({\protect\hypertarget{mathscope.occurrence.458}{\protect\hyperlink{mathscope.symbol.a0df08ddf9fe3f71}{y_\alpha}}\protect\zlabel{mathscope.occurrence.458}}={\protect\hypertarget{mathscope.occurrence.459}{\protect\hyperlink{mathscope.symbol.a0df08ddf9fe3f71}{y_\beta}}\protect\zlabel{mathscope.occurrence.459}}=0\), or both
values are nonzero and
\({\protect\hypertarget{mathscope.occurrence.460}{\protect\hyperlink{mathscope.symbol.a0df08ddf9fe3f71}{y_\alpha}}\protect\zlabel{mathscope.occurrence.460}}^2
={\protect\hypertarget{mathscope.occurrence.461}{\protect\hyperlink{mathscope.symbol.a0df08ddf9fe3f71}{y_\beta}}\protect\zlabel{mathscope.occurrence.461}}^2
={\protect\hypertarget{mathscope.occurrence.462}{\protect\hyperlink{mathscope.symbol.a76eff07b1e03bd8}{d}}\protect\zlabel{mathscope.occurrence.462}}^{-2}\).
The construction of \({\protect\hypertarget{mathscope.occurrence.463}{\protect\hyperlink{mathscope.symbol.344cc37043ef85d7}{\beta}}\protect\zlabel{mathscope.occurrence.463}}\) works for every
nonseparating \({\protect\hypertarget{mathscope.occurrence.464}{\protect\hyperlink{mathscope.symbol.cc40d5e6cfccd882}{\alpha}}\protect\zlabel{mathscope.occurrence.464}}\), and the same calculation applies
to every pair of nonseparating curves with geometric intersection number
one.
If \({\protect\hypertarget{mathscope.occurrence.465}{\protect\hyperlink{mathscope.symbol.e5d086605f4f771b}{\Sigma}}\protect\zlabel{mathscope.occurrence.465}}\) is closed, its intersection-one graph of
nonseparating curves is connected by
\textcite[Lemma~4.5]{FarbMargalit2012} with no punctures.
If \({\protect\hypertarget{mathscope.occurrence.466}{\protect\hyperlink{mathscope.symbol.e5d086605f4f771b}{\Sigma}}\protect\zlabel{mathscope.occurrence.466}}\) has one boundary component, its interior
is homeomorphic to the once-punctured closed surface of the same genus, so
the same graph is connected by the one-puncture case of
\textcite[Lemma~4.5]{FarbMargalit2012}.
Connectivity and the preceding intersection-one calculation imply that either
\({\protect\hypertarget{mathscope.occurrence.467}{\protect\hyperlink{mathscope.symbol.a0df08ddf9fe3f71}{y_\alpha}}\protect\zlabel{mathscope.occurrence.467}}=0\) for every nonseparating
\({\protect\hypertarget{mathscope.occurrence.468}{\protect\hyperlink{mathscope.symbol.cc40d5e6cfccd882}{\alpha}}\protect\zlabel{mathscope.occurrence.468}}\), or
\({\protect\hypertarget{mathscope.occurrence.469}{\protect\hyperlink{mathscope.symbol.a0df08ddf9fe3f71}{y_\alpha}}\protect\zlabel{mathscope.occurrence.469}}\neq0\) for every such
\({\protect\hypertarget{mathscope.occurrence.470}{\protect\hyperlink{mathscope.symbol.cc40d5e6cfccd882}{\alpha}}\protect\zlabel{mathscope.occurrence.470}}\).
Either all normalized values vanish, or every normalized value has square
\({\protect\hypertarget{mathscope.occurrence.471}{\protect\hyperlink{mathscope.symbol.a76eff07b1e03bd8}{d}}\protect\zlabel{mathscope.occurrence.471}}^{-2}\), and hence equals
\(\pm{\protect\hypertarget{mathscope.occurrence.472}{\protect\hyperlink{mathscope.symbol.a76eff07b1e03bd8}{d}}\protect\zlabel{mathscope.occurrence.472}}^{-1}\).
The two cases are disjoint.
\end{proof}

\revdel{The Four-Holed-Sphere Obstruction}

To finish the proof of \cref{lem:no-character}, it remains to rule out the
two character patterns in \cref{lem:intersection-one-dichotomy}.
For this it suffices to find a skein relation involving only nonseparating
curves that becomes impossible both when all their character values vanish
and when all their values lie in
\(\{\pm({\protect\hypertarget{mathscope.occurrence.473}{\protect\hyperlink{mathscope.symbol.a8c0b3ceba5c59dd}{A}}\protect\zlabel{mathscope.occurrence.473}}+{\protect\hypertarget{mathscope.occurrence.474}{\protect\hyperlink{mathscope.symbol.a8c0b3ceba5c59dd}{A}}\protect\zlabel{mathscope.occurrence.474}}^{-1})\}\).
The Bullock--Przytycki four-holed-sphere relation has exactly these two
properties: its nonzero scalar term rules out the zero pattern, while its
highest \({\protect\hypertarget{mathscope.occurrence.475}{\protect\hyperlink{mathscope.symbol.a8c0b3ceba5c59dd}{A}}\protect\zlabel{mathscope.occurrence.475}}\)-degree rules out the second pattern.
\begin{localdefinition}[Ordered Bullock--Przytycki configuration]
Let \({\protect\hypertarget{mathscope.occurrence.476}{\protect\hyperlink{mathscope.symbol.fa5dff9603ca46cc}{P}}\protect\zlabel{mathscope.occurrence.476}}\cong{\protect\hypertarget{mathscope.occurrence.477}{\protect\hyperlink{mathscope.symbol.938578720bae9414}{\Sigma_{0,4}}}\protect\zlabel{mathscope.occurrence.477}}\) be an oriented
four-holed sphere.
For \(1\leq i\leq 4\), let
\({\protect\hypertarget{mathscope.occurrence.479}{\protect\hyperlink{mathscope.symbol.9826f0776fa9144e}{a_i}}\protect\zlabel{mathscope.occurrence.479}}\subset\operatorname{int}({\protect\hypertarget{mathscope.occurrence.480}{\protect\hyperlink{mathscope.symbol.fa5dff9603ca46cc}{P}}\protect\zlabel{mathscope.occurrence.480}})\)
be a boundary-parallel push-off of the
\(i\)-th boundary component.
Let
\(({\protect\hypertarget{mathscope.occurrence.482}{\protect\hyperlink{mathscope.symbol.7eaae3bb6234079a}{z_1}}\protect\zlabel{mathscope.occurrence.482}},{\protect\hypertarget{mathscope.occurrence.483}{\protect\hyperlink{mathscope.symbol.7eaae3bb6234079a}{z_2}}\protect\zlabel{mathscope.occurrence.483}},{\protect\hypertarget{mathscope.occurrence.484}{\protect\hyperlink{mathscope.symbol.7eaae3bb6234079a}{z_3}}\protect\zlabel{mathscope.occurrence.484}})\)
\ifshowrevisions
\begingroup
\color{gray}
be the standard ordered triple of essential curves in
\textcite[Equations~(3.1)--(3.2)]{BullockPrzytycki2000}, with the boundary
labels fixed as in \textcite[Figure~3]{CookeSamuelson2021}.
\endgroup
\fi
\begin{revaddblock}
be a choice of essential curves separating, respectively,
\[
  {\protect\hypertarget{mathscope.occurrence.485}{\protect\hyperlink{mathscope.symbol.7eaae3bb6234079a}{z_1}}\protect\zlabel{mathscope.occurrence.485}}\colon(12\mid34),\qquad
  {\protect\hypertarget{mathscope.occurrence.486}{\protect\hyperlink{mathscope.symbol.7eaae3bb6234079a}{z_2}}\protect\zlabel{mathscope.occurrence.486}}\colon(23\mid14),\qquad
  {\protect\hypertarget{mathscope.occurrence.487}{\protect\hyperlink{mathscope.symbol.7eaae3bb6234079a}{z_3}}\protect\zlabel{mathscope.occurrence.487}}\colon(13\mid24).
\]
The product convention is fixed by requiring
\({\protect\hypertarget{mathscope.occurrence.491}{\protect\hyperlink{mathscope.symbol.7eaae3bb6234079a}{z_3}}\protect\zlabel{mathscope.occurrence.491}}\) to be the essential curve occurring with
coefficient \({\protect\hypertarget{mathscope.occurrence.492}{\protect\hyperlink{mathscope.symbol.a8c0b3ceba5c59dd}{A}}\protect\zlabel{mathscope.occurrence.492}}^{2}\) in the resolution of the
ordered product
\({\protect\hypertarget{mathscope.occurrence.493}{\protect\hyperlink{mathscope.symbol.7eaae3bb6234079a}{z_1}}\protect\zlabel{mathscope.occurrence.493}}{\protect\hypertarget{mathscope.occurrence.494}{\protect\hyperlink{mathscope.symbol.7eaae3bb6234079a}{z_2}}\protect\zlabel{mathscope.occurrence.494}}\) in
\textcite[Equation~(3.1)]{BullockPrzytycki2000}; the boundary labels agree
with \textcite[Figure~3]{CookeSamuelson2021}.
\[
  {\protect\hypertarget{mathscope.occurrence.495}{\protect\hyperlink{mathscope.symbol.48fe6ffe25e5deb2}{\iota}}\protect\zlabel{mathscope.occurrence.495}}
    ({\protect\hypertarget{mathscope.occurrence.496}{\protect\hyperlink{mathscope.symbol.7eaae3bb6234079a}{z_1}}\protect\zlabel{mathscope.occurrence.496}},{\protect\hypertarget{mathscope.occurrence.497}{\protect\hyperlink{mathscope.symbol.7eaae3bb6234079a}{z_2}}\protect\zlabel{mathscope.occurrence.497}})
  ={\protect\hypertarget{mathscope.occurrence.498}{\protect\hyperlink{mathscope.symbol.48fe6ffe25e5deb2}{\iota}}\protect\zlabel{mathscope.occurrence.498}}
    ({\protect\hypertarget{mathscope.occurrence.499}{\protect\hyperlink{mathscope.symbol.7eaae3bb6234079a}{z_2}}\protect\zlabel{mathscope.occurrence.499}},{\protect\hypertarget{mathscope.occurrence.500}{\protect\hyperlink{mathscope.symbol.7eaae3bb6234079a}{z_3}}\protect\zlabel{mathscope.occurrence.500}})
  ={\protect\hypertarget{mathscope.occurrence.501}{\protect\hyperlink{mathscope.symbol.48fe6ffe25e5deb2}{\iota}}\protect\zlabel{mathscope.occurrence.501}}
    ({\protect\hypertarget{mathscope.occurrence.502}{\protect\hyperlink{mathscope.symbol.7eaae3bb6234079a}{z_3}}\protect\zlabel{mathscope.occurrence.502}},{\protect\hypertarget{mathscope.occurrence.503}{\protect\hyperlink{mathscope.symbol.7eaae3bb6234079a}{z_1}}\protect\zlabel{mathscope.occurrence.503}})
  =2.
\]
Thus every pair in the ordered triple intersects twice.
Any orientation-preserving, boundary-label-preserving image of this ordered
triple may be used, since the relation below is natural under surface
homeomorphisms.
\end{revaddblock}
\end{localdefinition}

\begin{localfact}[Four-Holed-Sphere Relation]
After base change from \({\protect\hypertarget{mathscope.occurrence.504}{\protect\hyperlink{mathscope.symbol.940d6398c67789c9}{\mathbb{Z}[A^{\pm1}]}}\protect\zlabel{mathscope.occurrence.504}}\) to \({\protect\hypertarget{mathscope.occurrence.505}{\protect\hyperlink{mathscope.symbol.793c1bdec545e306}{\mathbb{Q}(A)}}\protect\zlabel{mathscope.occurrence.505}}\), the
four-holed-sphere relation in
\textcite[Equation~(3.2)]{BullockPrzytycki2000} is
\begin{align}
  {\protect\hypertarget{mathscope.occurrence.506}{\protect\hyperlink{mathscope.symbol.a8c0b3ceba5c59dd}{A}}\protect\zlabel{mathscope.occurrence.506}}^2{\protect\hypertarget{mathscope.occurrence.507}{\protect\hyperlink{mathscope.symbol.7eaae3bb6234079a}{z_1}}\protect\zlabel{mathscope.occurrence.507}}{\protect\hypertarget{mathscope.occurrence.508}{\protect\hyperlink{mathscope.symbol.7eaae3bb6234079a}{z_2}}\protect\zlabel{mathscope.occurrence.508}}{\protect\hypertarget{mathscope.occurrence.509}{\protect\hyperlink{mathscope.symbol.7eaae3bb6234079a}{z_3}}\protect\zlabel{mathscope.occurrence.509}}
  ={}&{\protect\hypertarget{mathscope.occurrence.510}{\protect\hyperlink{mathscope.symbol.a8c0b3ceba5c59dd}{A}}\protect\zlabel{mathscope.occurrence.510}}^4{\protect\hypertarget{mathscope.occurrence.511}{\protect\hyperlink{mathscope.symbol.7eaae3bb6234079a}{z_1}}\protect\zlabel{mathscope.occurrence.511}}^2+{\protect\hypertarget{mathscope.occurrence.512}{\protect\hyperlink{mathscope.symbol.a8c0b3ceba5c59dd}{A}}\protect\zlabel{mathscope.occurrence.512}}^{-4}{\protect\hypertarget{mathscope.occurrence.513}{\protect\hyperlink{mathscope.symbol.7eaae3bb6234079a}{z_2}}\protect\zlabel{mathscope.occurrence.513}}^2+{\protect\hypertarget{mathscope.occurrence.514}{\protect\hyperlink{mathscope.symbol.a8c0b3ceba5c59dd}{A}}\protect\zlabel{mathscope.occurrence.514}}^4{\protect\hypertarget{mathscope.occurrence.515}{\protect\hyperlink{mathscope.symbol.7eaae3bb6234079a}{z_3}}\protect\zlabel{mathscope.occurrence.515}}^2 \notag\\
    &+{\protect\hypertarget{mathscope.occurrence.516}{\protect\hyperlink{mathscope.symbol.a8c0b3ceba5c59dd}{A}}\protect\zlabel{mathscope.occurrence.516}}^2{\protect\hypertarget{mathscope.occurrence.517}{\protect\hyperlink{mathscope.symbol.8b496f65f4dd0c3b}{p_1}}\protect\zlabel{mathscope.occurrence.517}}{\protect\hypertarget{mathscope.occurrence.518}{\protect\hyperlink{mathscope.symbol.7eaae3bb6234079a}{z_1}}\protect\zlabel{mathscope.occurrence.518}}+{\protect\hypertarget{mathscope.occurrence.519}{\protect\hyperlink{mathscope.symbol.a8c0b3ceba5c59dd}{A}}\protect\zlabel{mathscope.occurrence.519}}^{-2}{\protect\hypertarget{mathscope.occurrence.520}{\protect\hyperlink{mathscope.symbol.8b496f65f4dd0c3b}{p_2}}\protect\zlabel{mathscope.occurrence.520}}{\protect\hypertarget{mathscope.occurrence.521}{\protect\hyperlink{mathscope.symbol.7eaae3bb6234079a}{z_2}}\protect\zlabel{mathscope.occurrence.521}}+{\protect\hypertarget{mathscope.occurrence.522}{\protect\hyperlink{mathscope.symbol.a8c0b3ceba5c59dd}{A}}\protect\zlabel{mathscope.occurrence.522}}^2{\protect\hypertarget{mathscope.occurrence.523}{\protect\hyperlink{mathscope.symbol.8b496f65f4dd0c3b}{p_3}}\protect\zlabel{mathscope.occurrence.523}}{\protect\hypertarget{mathscope.occurrence.524}{\protect\hyperlink{mathscope.symbol.7eaae3bb6234079a}{z_3}}\protect\zlabel{mathscope.occurrence.524}} \notag\\
    &+{\protect\hypertarget{mathscope.occurrence.525}{\protect\hyperlink{mathscope.symbol.21dc1d4c093d8950}{q}}\protect\zlabel{mathscope.occurrence.525}}-({\protect\hypertarget{mathscope.occurrence.526}{\protect\hyperlink{mathscope.symbol.a8c0b3ceba5c59dd}{A}}\protect\zlabel{mathscope.occurrence.526}}^2+{\protect\hypertarget{mathscope.occurrence.527}{\protect\hyperlink{mathscope.symbol.a8c0b3ceba5c59dd}{A}}\protect\zlabel{mathscope.occurrence.527}}^{-2})^2, \label{eq:four-holed-relation}
\end{align}
where
\begin{align}
  {\protect\hypertarget{mathscope.occurrence.528}{\protect\hyperlink{mathscope.symbol.8b496f65f4dd0c3b}{p_1}}\protect\zlabel{mathscope.occurrence.528}}&={\protect\hypertarget{mathscope.occurrence.529}{\protect\hyperlink{mathscope.symbol.9826f0776fa9144e}{a_1}}\protect\zlabel{mathscope.occurrence.529}}{\protect\hypertarget{mathscope.occurrence.530}{\protect\hyperlink{mathscope.symbol.9826f0776fa9144e}{a_2}}\protect\zlabel{mathscope.occurrence.530}}+{\protect\hypertarget{mathscope.occurrence.531}{\protect\hyperlink{mathscope.symbol.9826f0776fa9144e}{a_3}}\protect\zlabel{mathscope.occurrence.531}}{\protect\hypertarget{mathscope.occurrence.532}{\protect\hyperlink{mathscope.symbol.9826f0776fa9144e}{a_4}}\protect\zlabel{mathscope.occurrence.532}},&
  {\protect\hypertarget{mathscope.occurrence.533}{\protect\hyperlink{mathscope.symbol.8b496f65f4dd0c3b}{p_2}}\protect\zlabel{mathscope.occurrence.533}}&={\protect\hypertarget{mathscope.occurrence.534}{\protect\hyperlink{mathscope.symbol.9826f0776fa9144e}{a_2}}\protect\zlabel{mathscope.occurrence.534}}{\protect\hypertarget{mathscope.occurrence.535}{\protect\hyperlink{mathscope.symbol.9826f0776fa9144e}{a_3}}\protect\zlabel{mathscope.occurrence.535}}+{\protect\hypertarget{mathscope.occurrence.536}{\protect\hyperlink{mathscope.symbol.9826f0776fa9144e}{a_1}}\protect\zlabel{mathscope.occurrence.536}}{\protect\hypertarget{mathscope.occurrence.537}{\protect\hyperlink{mathscope.symbol.9826f0776fa9144e}{a_4}}\protect\zlabel{mathscope.occurrence.537}},&
  {\protect\hypertarget{mathscope.occurrence.538}{\protect\hyperlink{mathscope.symbol.8b496f65f4dd0c3b}{p_3}}\protect\zlabel{mathscope.occurrence.538}}&={\protect\hypertarget{mathscope.occurrence.539}{\protect\hyperlink{mathscope.symbol.9826f0776fa9144e}{a_1}}\protect\zlabel{mathscope.occurrence.539}}{\protect\hypertarget{mathscope.occurrence.540}{\protect\hyperlink{mathscope.symbol.9826f0776fa9144e}{a_3}}\protect\zlabel{mathscope.occurrence.540}}+{\protect\hypertarget{mathscope.occurrence.541}{\protect\hyperlink{mathscope.symbol.9826f0776fa9144e}{a_2}}\protect\zlabel{mathscope.occurrence.541}}{\protect\hypertarget{mathscope.occurrence.542}{\protect\hyperlink{mathscope.symbol.9826f0776fa9144e}{a_4}}\protect\zlabel{mathscope.occurrence.542}}, \label{eq:p-definitions}\\
  {\protect\hypertarget{mathscope.occurrence.543}{\protect\hyperlink{mathscope.symbol.21dc1d4c093d8950}{q}}\protect\zlabel{mathscope.occurrence.543}}&={\protect\hypertarget{mathscope.occurrence.544}{\protect\hyperlink{mathscope.symbol.9826f0776fa9144e}{a_1}}\protect\zlabel{mathscope.occurrence.544}}{\protect\hypertarget{mathscope.occurrence.545}{\protect\hyperlink{mathscope.symbol.9826f0776fa9144e}{a_2}}\protect\zlabel{mathscope.occurrence.545}}{\protect\hypertarget{mathscope.occurrence.546}{\protect\hyperlink{mathscope.symbol.9826f0776fa9144e}{a_3}}\protect\zlabel{mathscope.occurrence.546}}{\protect\hypertarget{mathscope.occurrence.547}{\protect\hyperlink{mathscope.symbol.9826f0776fa9144e}{a_4}}\protect\zlabel{mathscope.occurrence.547}}+{\protect\hypertarget{mathscope.occurrence.548}{\protect\hyperlink{mathscope.symbol.9826f0776fa9144e}{a_1}}\protect\zlabel{mathscope.occurrence.548}}^2+{\protect\hypertarget{mathscope.occurrence.549}{\protect\hyperlink{mathscope.symbol.9826f0776fa9144e}{a_2}}\protect\zlabel{mathscope.occurrence.549}}^2+{\protect\hypertarget{mathscope.occurrence.550}{\protect\hyperlink{mathscope.symbol.9826f0776fa9144e}{a_3}}\protect\zlabel{mathscope.occurrence.550}}^2+{\protect\hypertarget{mathscope.occurrence.551}{\protect\hyperlink{mathscope.symbol.9826f0776fa9144e}{a_4}}\protect\zlabel{mathscope.occurrence.551}}^2.
  \label{eq:q-definition}
\end{align}
\end{localfact}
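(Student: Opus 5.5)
The relation can be proved in two stages. First, establish \cref{eq:four-holed-relation} in the integral skein algebra \(\mathcal S_\Lambda(P)\) of the four-holed sphere itself. Second, transport it to \(\Sigma\) and to \(\mathbb Q(A)\) coefficients. The second stage is formal. An orientation-preserving embedding \(P\hookrightarrow\operatorname{int}\Sigma\) induces an embedding of thickenings \(P\times[0,1]\hookrightarrow\Sigma\times[0,1]\), and this respects stacking. It therefore gives a \(\Lambda\)-algebra map \(\mathcal S_\Lambda(P)\to\mathcal S_\Lambda(\Sigma)\), and applying \(\mathbb Q(A)\otimes_\Lambda-\) preserves any identity in \(\mathcal S_\Lambda(P)\). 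Naturality under boundary-label-preserving homeomorphisms holds for the same reason: both sides of the relation are defined by curve classes. So the substance is the identity in \(\mathcal S_\Lambda(P)\), which is \textcite[Equation~(3.2)]{BullockPrzytycki2000}. I would either cite it directly or re-derive it as follows, to confirm the sign and \(A\)-power conventions match the ordering fixed in the local definition.

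Step one is the product of two of the curves. Since \(\iota(z_1,z_2)=2\), the diagram of \(z_1\) stacked over \(z_2\) has two crossings, and resolving them gives four states. The two ``coherent'' states (both \(A\)-smoothings, or both \(A^{-1}\)-smoothings) are single essential curves. By the normalization fixed in the local definition, the \(A^{2}\) state is \(z_3\); call the \(A^{-2}\) state \(z_3'\), the other \((13\mid 24)\)-curve. The two mixed states are each a pair of boundary-parallel curves, giving \(a_1a_3\) and \(a_2a_4\). This yields
\[
  z_1z_2=A^2z_3+A^{-2}z_3'+p_3 .
\]
Cyclically permuting labels yields the analogous expansions of \(z_2z_3\) and \(z_3z_1\), each involving its own twisted curve, together with \(p_1\) and \(p_2\).

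Step two is to eliminate \(z_3'\). Multiply the Step-one identity on the right by \(z_3\), so that \(z_1z_2z_3=A^2z_3^2+A^{-2}z_3'z_3+p_3z_3\). Then expand \(z_3'z_3\) by resolving its crossings. The curves \(z_3'\) and \(z_3\) meet in four points, since \(z_3'\) is the image of \(z_3\) under a double twist along \(z_1\) or \(z_2\). I would avoid a sixteen-state expansion by writing \(z_3'\) via the Step-one identity as \(A^{2}(z_1z_2-A^2z_3-p_3)\). I would then use the reversed product \(z_2z_1=A^{-2}z_3+A^{2}z_3'+p_3\), which follows from the same resolution with the roles of the smoothings swapped. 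Together these two expansions express everything through \(z_1^2\), \(z_2^2\), \(z_3^2\), \(p_iz_i\), and scalar terms. The scalar terms arise when a resolution contains trivial circles (each contributing \(-(A^2+A^{-2})\)) and the boundary circles, which produce \(q\) and \(-(A^2+A^{-2})^2\).

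The main obstacle is this bookkeeping of \(A\)-exponents in Step two: getting exactly \(A^4z_1^2+A^{-4}z_2^2+A^4z_3^2\) and the asymmetric \(A^{\pm2}p_i\) coefficients. The asymmetry is the product-order convention, and a mistake there would silently swap \(z_3\) with \(z_3'\). As a sanity check I would specialize to \(A=-1\), where the identity must reduce to the classical Fricke relation for \(\mathrm{SL}_2\)-traces of the four-holed sphere. If a coefficient disagrees, I would defer to the cited equation of \textcite{BullockPrzytycki2000}, with the labels aligned to \textcite[Figure~3]{CookeSamuelson2021} as stipulated.
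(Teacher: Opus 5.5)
Your main route is the paper's proof: cite \textcite[Equation~(3.2)]{BullockPrzytycki2000}, note that an identity over \(\mathbb Z[A^{\pm1}]\) survives base change to \(\mathbb Q(A)\), and align the labels with Cooke--Samuelson. The paper's proof consists of exactly that citation plus the assertion that the conventions and labels agree. Your transport along \(P\hookrightarrow\Sigma\) is also fine; the paper does it later, in the proof of the no-character lemma. What does not work is the re-derivation you offer as an independent check of the \(A\)-power conventions.

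Step one is correct. With \(z_1\) stacked over \(z_2\), the two coherent states are the two \((13\mid24)\)-curves, and the mixed states give \(a_1a_3\) and \(a_2a_4\). Step two, however, is circular. Substituting \(z_3'=A^{2}(z_1z_2-A^{2}z_3-p_3)\) into \(z_1z_2z_3=A^{2}z_3^{2}+A^{-2}z_3'z_3+p_3z_3\) gives the tautology \(z_1z_2z_3=z_1z_2z_3\). Substituting the reversed product instead gives only
\[
  \bigl(A^{2}z_1z_2-A^{-2}z_2z_1\bigr)z_3
  =\bigl((A^{4}-A^{-4})z_3+(A^{2}-A^{-2})p_3\bigr)z_3,
\]
which is the \(A^{2}\)-commutator relation multiplied by \(z_3\). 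Both identities are linear in \(z_3'\) and contain only \(z_1z_2\), \(z_2z_1\), \(z_3\) and \(p_3\). So no \(z_1^{2}\), \(z_2^{2}\), \(p_1z_1\), \(p_2z_2\) or scalar term can come out of them, and the claim that they express everything through those terms is false.

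Given Step one, the stated relation is equivalent to the four-crossing expansion
\[
  z_3'z_3=A^{4}z_1^{2}+A^{-4}z_2^{2}+A^{2}p_1z_1+A^{-2}p_2z_2+q-(A^{2}+A^{-2})^{2}.
\]
This expansion is the genuinely new topological input. You have to resolve the sixteen states of \(z_3'z_3\): its all-\(A\) and all-\(A^{-1}\) states are pairs of parallel copies of \(z_1\) and of \(z_2\), and the remaining states give the lower-order terms. Alternatively, do an equivalent computation, such as expanding the six-crossing diagram of \(z_1z_2z_3\) directly. There is no shortcut through Step one.

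Your \(A=-1\) test also cannot detect the error you are guarding against. At \(A=-1\), Step one gives \(z_3+z_3'=z_1z_2-p_3\), so \(z_3\) and \(z_3'\) are the two roots of the classical Fricke relation viewed as a quadratic in the third curve. Swaps \(A^{4}\leftrightarrow A^{-4}\) are invisible at \(A=-1\) as well. So, as in the paper, the conventions are settled by the citation and the stated label alignment, not by your sketch.
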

\begin{proof}
This is \textcite[Equation~(3.2)]{BullockPrzytycki2000}, after base change
to \({\protect\hypertarget{mathscope.occurrence.552}{\protect\hyperlink{mathscope.symbol.793c1bdec545e306}{\mathbb{Q}(A)}}\protect\zlabel{mathscope.occurrence.552}}\); its skein and trivial-loop conventions agree
with \cref{def:skein-algebra}, and the labels agree with
\textcite[Figure~3 and Theorem~2.16]{CookeSamuelson2021}.
\end{proof}

\revdel{It remains only to prove the no-character lemma; together with the
augmentation lemma, this proves the main theorem and disproves the
conjecture.}

\begin{proof}[Proof of \cref{lem:no-character}]
Suppose that a character
\({\protect\hypertarget{mathscope.occurrence.553}{\protect\hyperlink{mathscope.symbol.828377fa8e00251e}{\chi}}\protect\zlabel{mathscope.occurrence.553}}\colon
{\protect\hypertarget{mathscope.occurrence.554}{\protect\hyperlink{mathscope.symbol.2fb732ea957dfec0}{\mathcal{S}(\Sigma,\mathbb{Q}(A))}}\protect\zlabel{mathscope.occurrence.554}}\to{\protect\hypertarget{mathscope.occurrence.555}{\protect\hyperlink{mathscope.symbol.793c1bdec545e306}{\mathbb{Q}(A)}}\protect\zlabel{mathscope.occurrence.555}}\) exists.
\revdel{The previous wording said that the normalized dichotomy directly
gave the corresponding alternatives for the raw curve skeins.}
\begin{revaddblock}
Multiplying the normalized alternatives in
\cref{lem:intersection-one-dichotomy} by
\({\protect\hypertarget{mathscope.occurrence.556}{\protect\hyperlink{mathscope.symbol.a8c0b3ceba5c59dd}{A}}\protect\zlabel{mathscope.occurrence.556}}^{2}-{\protect\hypertarget{mathscope.occurrence.557}{\protect\hyperlink{mathscope.symbol.a8c0b3ceba5c59dd}{A}}\protect\zlabel{mathscope.occurrence.557}}^{-2}\), and using
\[
  \frac{{\protect\hypertarget{mathscope.occurrence.558}{\protect\hyperlink{mathscope.symbol.a8c0b3ceba5c59dd}{A}}\protect\zlabel{mathscope.occurrence.558}}^{2}
  -{\protect\hypertarget{mathscope.occurrence.559}{\protect\hyperlink{mathscope.symbol.a8c0b3ceba5c59dd}{A}}\protect\zlabel{mathscope.occurrence.559}}^{-2}}
  {{\protect\hypertarget{mathscope.occurrence.560}{\protect\hyperlink{mathscope.symbol.a8c0b3ceba5c59dd}{A}}\protect\zlabel{mathscope.occurrence.560}}-{\protect\hypertarget{mathscope.occurrence.561}{\protect\hyperlink{mathscope.symbol.a8c0b3ceba5c59dd}{A}}\protect\zlabel{mathscope.occurrence.561}}^{-1}}
  ={\protect\hypertarget{mathscope.occurrence.562}{\protect\hyperlink{mathscope.symbol.a8c0b3ceba5c59dd}{A}}\protect\zlabel{mathscope.occurrence.562}}+{\protect\hypertarget{mathscope.occurrence.563}{\protect\hyperlink{mathscope.symbol.a8c0b3ceba5c59dd}{A}}\protect\zlabel{mathscope.occurrence.563}}^{-1},
\]
shows that either
\end{revaddblock}
\({\protect\hypertarget{mathscope.occurrence.564}{\protect\hyperlink{mathscope.symbol.828377fa8e00251e}{\chi}}\protect\zlabel{mathscope.occurrence.564}}({\protect\hypertarget{mathscope.occurrence.565}{\protect\hyperlink{mathscope.symbol.cc40d5e6cfccd882}{\alpha}}\protect\zlabel{mathscope.occurrence.565}})=0\) for every
nonseparating \({\protect\hypertarget{mathscope.occurrence.566}{\protect\hyperlink{mathscope.symbol.cc40d5e6cfccd882}{\alpha}}\protect\zlabel{mathscope.occurrence.566}}\), or every such value is
\(\pm({\protect\hypertarget{mathscope.occurrence.567}{\protect\hyperlink{mathscope.symbol.a8c0b3ceba5c59dd}{A}}\protect\zlabel{mathscope.occurrence.567}}+{\protect\hypertarget{mathscope.occurrence.568}{\protect\hyperlink{mathscope.symbol.a8c0b3ceba5c59dd}{A}}\protect\zlabel{mathscope.occurrence.568}}^{-1})\).
Choose an embedding
\({\protect\hypertarget{mathscope.occurrence.569}{\protect\hyperlink{mathscope.symbol.fa5dff9603ca46cc}{P}}\protect\zlabel{mathscope.occurrence.569}}\hookrightarrow{\protect\hypertarget{mathscope.occurrence.570}{\protect\hyperlink{mathscope.symbol.e5d086605f4f771b}{\Sigma}}\protect\zlabel{mathscope.occurrence.570}}\)
with connected complement.
\revadd{Such an orientation-preserving embedding exists by gluing the four
boundary circles of \({\protect\hypertarget{mathscope.occurrence.571}{\protect\hyperlink{mathscope.symbol.fa5dff9603ca46cc}{P}}\protect\zlabel{mathscope.occurrence.571}}\) to four boundary circles
of a connected genus-\(({\protect\hypertarget{mathscope.occurrence.572}{\protect\hyperlink{mathscope.symbol.cfb322816069d93a}{g}}\protect\zlabel{mathscope.occurrence.572}}-3)\) surface by
orientation-reversing boundary identifications, leaving
exactly the boundary circle of \({\protect\hypertarget{mathscope.occurrence.573}{\protect\hyperlink{mathscope.symbol.e5d086605f4f771b}{\Sigma}}\protect\zlabel{mathscope.occurrence.573}}\), if any,
unglued.}
\revadd{After cutting along any
\({\protect\hypertarget{mathscope.occurrence.574}{\protect\hyperlink{mathscope.symbol.9826f0776fa9144e}{a_i}}\protect\zlabel{mathscope.occurrence.574}}\), the two sides remain joined through the other
three gluing circles; after cutting along any
\({\protect\hypertarget{mathscope.occurrence.575}{\protect\hyperlink{mathscope.symbol.7eaae3bb6234079a}{z_i}}\protect\zlabel{mathscope.occurrence.575}}\), the two resulting pairs of pants remain joined
through the connected complement.  Hence all four
\({\protect\hypertarget{mathscope.occurrence.576}{\protect\hyperlink{mathscope.symbol.9826f0776fa9144e}{a_i}}\protect\zlabel{mathscope.occurrence.576}}\) and all three
\({\protect\hypertarget{mathscope.occurrence.577}{\protect\hyperlink{mathscope.symbol.7eaae3bb6234079a}{z_i}}\protect\zlabel{mathscope.occurrence.577}}\) are nonseparating in
\({\protect\hypertarget{mathscope.occurrence.578}{\protect\hyperlink{mathscope.symbol.e5d086605f4f771b}{\Sigma}}\protect\zlabel{mathscope.occurrence.578}}\).}
By \textcite[Proposition~1.1(3)]{BullockPrzytycki2000}, the induced
skein-algebra homomorphism makes \cref{eq:four-holed-relation} an identity
in \({\protect\hypertarget{mathscope.occurrence.579}{\protect\hyperlink{mathscope.symbol.2fb732ea957dfec0}{\mathcal{S}(\Sigma,\mathbb{Q}(A))}}\protect\zlabel{mathscope.occurrence.579}}\).

If \({\protect\hypertarget{mathscope.occurrence.580}{\protect\hyperlink{mathscope.symbol.828377fa8e00251e}{\chi}}\protect\zlabel{mathscope.occurrence.580}}\) vanishes on every nonseparating simple closed curve, applying it to
\cref{eq:four-holed-relation} gives
\[
  0=-({\protect\hypertarget{mathscope.occurrence.581}{\protect\hyperlink{mathscope.symbol.a8c0b3ceba5c59dd}{A}}\protect\zlabel{mathscope.occurrence.581}}^2+{\protect\hypertarget{mathscope.occurrence.582}{\protect\hyperlink{mathscope.symbol.a8c0b3ceba5c59dd}{A}}\protect\zlabel{mathscope.occurrence.582}}^{-2})^2,
\]
which is impossible in \({\protect\hypertarget{mathscope.occurrence.583}{\protect\hyperlink{mathscope.symbol.793c1bdec545e306}{\mathbb{Q}(A)}}\protect\zlabel{mathscope.occurrence.583}}\).

In the second branch, put
\({\protect\hypertarget{mathscope.occurrence.584}{\protect\hyperlink{mathscope.symbol.146522e6ef302a05}{U}}\protect\zlabel{mathscope.occurrence.584}}={\protect\hypertarget{mathscope.occurrence.585}{\protect\hyperlink{mathscope.symbol.a8c0b3ceba5c59dd}{A}}\protect\zlabel{mathscope.occurrence.585}}
+{\protect\hypertarget{mathscope.occurrence.586}{\protect\hyperlink{mathscope.symbol.a8c0b3ceba5c59dd}{A}}\protect\zlabel{mathscope.occurrence.586}}^{-1}\), and write
\begin{align*}
  {\protect\hypertarget{mathscope.occurrence.587}{\protect\hyperlink{mathscope.symbol.828377fa8e00251e}{\chi}}\protect\zlabel{mathscope.occurrence.587}}({\protect\hypertarget{mathscope.occurrence.588}{\protect\hyperlink{mathscope.symbol.9826f0776fa9144e}{a_1}}\protect\zlabel{mathscope.occurrence.588}})
    &={\protect\hypertarget{mathscope.occurrence.589}{\protect\hyperlink{mathscope.symbol.3c539076f6aef6ce}{\varepsilon_1}}\protect\zlabel{mathscope.occurrence.589}}{\protect\hypertarget{mathscope.occurrence.590}{\protect\hyperlink{mathscope.symbol.146522e6ef302a05}{U}}\protect\zlabel{mathscope.occurrence.590}},&
  {\protect\hypertarget{mathscope.occurrence.591}{\protect\hyperlink{mathscope.symbol.828377fa8e00251e}{\chi}}\protect\zlabel{mathscope.occurrence.591}}({\protect\hypertarget{mathscope.occurrence.592}{\protect\hyperlink{mathscope.symbol.9826f0776fa9144e}{a_2}}\protect\zlabel{mathscope.occurrence.592}})
    &={\protect\hypertarget{mathscope.occurrence.593}{\protect\hyperlink{mathscope.symbol.3c539076f6aef6ce}{\varepsilon_2}}\protect\zlabel{mathscope.occurrence.593}}{\protect\hypertarget{mathscope.occurrence.594}{\protect\hyperlink{mathscope.symbol.146522e6ef302a05}{U}}\protect\zlabel{mathscope.occurrence.594}},&
  {\protect\hypertarget{mathscope.occurrence.595}{\protect\hyperlink{mathscope.symbol.828377fa8e00251e}{\chi}}\protect\zlabel{mathscope.occurrence.595}}({\protect\hypertarget{mathscope.occurrence.596}{\protect\hyperlink{mathscope.symbol.9826f0776fa9144e}{a_3}}\protect\zlabel{mathscope.occurrence.596}})
    &={\protect\hypertarget{mathscope.occurrence.597}{\protect\hyperlink{mathscope.symbol.3c539076f6aef6ce}{\varepsilon_3}}\protect\zlabel{mathscope.occurrence.597}}{\protect\hypertarget{mathscope.occurrence.598}{\protect\hyperlink{mathscope.symbol.146522e6ef302a05}{U}}\protect\zlabel{mathscope.occurrence.598}},&
  {\protect\hypertarget{mathscope.occurrence.599}{\protect\hyperlink{mathscope.symbol.828377fa8e00251e}{\chi}}\protect\zlabel{mathscope.occurrence.599}}({\protect\hypertarget{mathscope.occurrence.600}{\protect\hyperlink{mathscope.symbol.9826f0776fa9144e}{a_4}}\protect\zlabel{mathscope.occurrence.600}})
    &={\protect\hypertarget{mathscope.occurrence.601}{\protect\hyperlink{mathscope.symbol.3c539076f6aef6ce}{\varepsilon_4}}\protect\zlabel{mathscope.occurrence.601}}{\protect\hypertarget{mathscope.occurrence.602}{\protect\hyperlink{mathscope.symbol.146522e6ef302a05}{U}}\protect\zlabel{mathscope.occurrence.602}},\\
  {\protect\hypertarget{mathscope.occurrence.603}{\protect\hyperlink{mathscope.symbol.828377fa8e00251e}{\chi}}\protect\zlabel{mathscope.occurrence.603}}({\protect\hypertarget{mathscope.occurrence.604}{\protect\hyperlink{mathscope.symbol.7eaae3bb6234079a}{z_1}}\protect\zlabel{mathscope.occurrence.604}})
    &={\protect\hypertarget{mathscope.occurrence.605}{\protect\hyperlink{mathscope.symbol.1d23208b547b148d}{\delta_1}}\protect\zlabel{mathscope.occurrence.605}}{\protect\hypertarget{mathscope.occurrence.606}{\protect\hyperlink{mathscope.symbol.146522e6ef302a05}{U}}\protect\zlabel{mathscope.occurrence.606}},&
  {\protect\hypertarget{mathscope.occurrence.607}{\protect\hyperlink{mathscope.symbol.828377fa8e00251e}{\chi}}\protect\zlabel{mathscope.occurrence.607}}({\protect\hypertarget{mathscope.occurrence.608}{\protect\hyperlink{mathscope.symbol.7eaae3bb6234079a}{z_2}}\protect\zlabel{mathscope.occurrence.608}})
    &={\protect\hypertarget{mathscope.occurrence.609}{\protect\hyperlink{mathscope.symbol.1d23208b547b148d}{\delta_2}}\protect\zlabel{mathscope.occurrence.609}}{\protect\hypertarget{mathscope.occurrence.610}{\protect\hyperlink{mathscope.symbol.146522e6ef302a05}{U}}\protect\zlabel{mathscope.occurrence.610}},&
  {\protect\hypertarget{mathscope.occurrence.611}{\protect\hyperlink{mathscope.symbol.828377fa8e00251e}{\chi}}\protect\zlabel{mathscope.occurrence.611}}({\protect\hypertarget{mathscope.occurrence.612}{\protect\hyperlink{mathscope.symbol.7eaae3bb6234079a}{z_3}}\protect\zlabel{mathscope.occurrence.612}})
    &={\protect\hypertarget{mathscope.occurrence.613}{\protect\hyperlink{mathscope.symbol.1d23208b547b148d}{\delta_3}}\protect\zlabel{mathscope.occurrence.613}}{\protect\hypertarget{mathscope.occurrence.614}{\protect\hyperlink{mathscope.symbol.146522e6ef302a05}{U}}\protect\zlabel{mathscope.occurrence.614}},&&
\end{align*}
where all seven signs lie in \(\{\pm1\}\).
Set
\begin{align*}
  B
  &=
  {\protect\hypertarget{mathscope.occurrence.616}{\protect\hyperlink{mathscope.symbol.1d23208b547b148d}{\delta_1}}\protect\zlabel{mathscope.occurrence.616}}
    ({\protect\hypertarget{mathscope.occurrence.617}{\protect\hyperlink{mathscope.symbol.3c539076f6aef6ce}{\varepsilon_1}}\protect\zlabel{mathscope.occurrence.617}}{\protect\hypertarget{mathscope.occurrence.618}{\protect\hyperlink{mathscope.symbol.3c539076f6aef6ce}{\varepsilon_2}}\protect\zlabel{mathscope.occurrence.618}}
     +{\protect\hypertarget{mathscope.occurrence.619}{\protect\hyperlink{mathscope.symbol.3c539076f6aef6ce}{\varepsilon_3}}\protect\zlabel{mathscope.occurrence.619}}{\protect\hypertarget{mathscope.occurrence.620}{\protect\hyperlink{mathscope.symbol.3c539076f6aef6ce}{\varepsilon_4}}\protect\zlabel{mathscope.occurrence.620}})
  +{\protect\hypertarget{mathscope.occurrence.621}{\protect\hyperlink{mathscope.symbol.1d23208b547b148d}{\delta_3}}\protect\zlabel{mathscope.occurrence.621}}
    ({\protect\hypertarget{mathscope.occurrence.622}{\protect\hyperlink{mathscope.symbol.3c539076f6aef6ce}{\varepsilon_1}}\protect\zlabel{mathscope.occurrence.622}}{\protect\hypertarget{mathscope.occurrence.623}{\protect\hyperlink{mathscope.symbol.3c539076f6aef6ce}{\varepsilon_3}}\protect\zlabel{mathscope.occurrence.623}}
     +{\protect\hypertarget{mathscope.occurrence.624}{\protect\hyperlink{mathscope.symbol.3c539076f6aef6ce}{\varepsilon_2}}\protect\zlabel{mathscope.occurrence.624}}{\protect\hypertarget{mathscope.occurrence.625}{\protect\hyperlink{mathscope.symbol.3c539076f6aef6ce}{\varepsilon_4}}\protect\zlabel{mathscope.occurrence.625}})
  -{\protect\hypertarget{mathscope.occurrence.626}{\protect\hyperlink{mathscope.symbol.1d23208b547b148d}{\delta_1}}\protect\zlabel{mathscope.occurrence.626}}{\protect\hypertarget{mathscope.occurrence.627}{\protect\hyperlink{mathscope.symbol.1d23208b547b148d}{\delta_2}}\protect\zlabel{mathscope.occurrence.627}}{\protect\hypertarget{mathscope.occurrence.628}{\protect\hyperlink{mathscope.symbol.1d23208b547b148d}{\delta_3}}\protect\zlabel{mathscope.occurrence.628}},\\
  C
  &=
  {\protect\hypertarget{mathscope.occurrence.630}{\protect\hyperlink{mathscope.symbol.1d23208b547b148d}{\delta_2}}\protect\zlabel{mathscope.occurrence.630}}
    ({\protect\hypertarget{mathscope.occurrence.631}{\protect\hyperlink{mathscope.symbol.3c539076f6aef6ce}{\varepsilon_2}}\protect\zlabel{mathscope.occurrence.631}}{\protect\hypertarget{mathscope.occurrence.632}{\protect\hyperlink{mathscope.symbol.3c539076f6aef6ce}{\varepsilon_3}}\protect\zlabel{mathscope.occurrence.632}}
     +{\protect\hypertarget{mathscope.occurrence.633}{\protect\hyperlink{mathscope.symbol.3c539076f6aef6ce}{\varepsilon_1}}\protect\zlabel{mathscope.occurrence.633}}{\protect\hypertarget{mathscope.occurrence.634}{\protect\hyperlink{mathscope.symbol.3c539076f6aef6ce}{\varepsilon_4}}\protect\zlabel{mathscope.occurrence.634}}),\\
  E
  &={\protect\hypertarget{mathscope.occurrence.636}{\protect\hyperlink{mathscope.symbol.3c539076f6aef6ce}{\varepsilon_1}}\protect\zlabel{mathscope.occurrence.636}}{\protect\hypertarget{mathscope.occurrence.637}{\protect\hyperlink{mathscope.symbol.3c539076f6aef6ce}{\varepsilon_2}}\protect\zlabel{mathscope.occurrence.637}}
    {\protect\hypertarget{mathscope.occurrence.638}{\protect\hyperlink{mathscope.symbol.3c539076f6aef6ce}{\varepsilon_3}}\protect\zlabel{mathscope.occurrence.638}}{\protect\hypertarget{mathscope.occurrence.639}{\protect\hyperlink{mathscope.symbol.3c539076f6aef6ce}{\varepsilon_4}}\protect\zlabel{mathscope.occurrence.639}}.
\end{align*}
Applying \({\protect\hypertarget{mathscope.occurrence.640}{\protect\hyperlink{mathscope.symbol.828377fa8e00251e}{\chi}}\protect\zlabel{mathscope.occurrence.640}}\) to
\cref{eq:four-holed-relation} and expanding gives
\begin{align*}
0={}&2{\protect\hypertarget{mathscope.occurrence.641}{\protect\hyperlink{mathscope.symbol.a8c0b3ceba5c59dd}{A}}\protect\zlabel{mathscope.occurrence.641}}^{6}
+B{\protect\hypertarget{mathscope.occurrence.643}{\protect\hyperlink{mathscope.symbol.a8c0b3ceba5c59dd}{A}}\protect\zlabel{mathscope.occurrence.643}}^{5}
+(3+E){\protect\hypertarget{mathscope.occurrence.645}{\protect\hyperlink{mathscope.symbol.a8c0b3ceba5c59dd}{A}}\protect\zlabel{mathscope.occurrence.645}}^{4}
+3B{\protect\hypertarget{mathscope.occurrence.647}{\protect\hyperlink{mathscope.symbol.a8c0b3ceba5c59dd}{A}}\protect\zlabel{mathscope.occurrence.647}}^{3}\\
&+(6+4E){\protect\hypertarget{mathscope.occurrence.649}{\protect\hyperlink{mathscope.symbol.a8c0b3ceba5c59dd}{A}}\protect\zlabel{mathscope.occurrence.649}}^{2}
+(3B+C)
  {\protect\hypertarget{mathscope.occurrence.652}{\protect\hyperlink{mathscope.symbol.a8c0b3ceba5c59dd}{A}}\protect\zlabel{mathscope.occurrence.652}}
+6+6E\\
&+(B+3C)
  {\protect\hypertarget{mathscope.occurrence.656}{\protect\hyperlink{mathscope.symbol.a8c0b3ceba5c59dd}{A}}\protect\zlabel{mathscope.occurrence.656}}^{-1}
+(5+4E){\protect\hypertarget{mathscope.occurrence.658}{\protect\hyperlink{mathscope.symbol.a8c0b3ceba5c59dd}{A}}\protect\zlabel{mathscope.occurrence.658}}^{-2}
+3C{\protect\hypertarget{mathscope.occurrence.660}{\protect\hyperlink{mathscope.symbol.a8c0b3ceba5c59dd}{A}}\protect\zlabel{mathscope.occurrence.660}}^{-3}\\
&+(1+E){\protect\hypertarget{mathscope.occurrence.662}{\protect\hyperlink{mathscope.symbol.a8c0b3ceba5c59dd}{A}}\protect\zlabel{mathscope.occurrence.662}}^{-4}
+C{\protect\hypertarget{mathscope.occurrence.664}{\protect\hyperlink{mathscope.symbol.a8c0b3ceba5c59dd}{A}}\protect\zlabel{mathscope.occurrence.664}}^{-5}
+{\protect\hypertarget{mathscope.occurrence.665}{\protect\hyperlink{mathscope.symbol.a8c0b3ceba5c59dd}{A}}\protect\zlabel{mathscope.occurrence.665}}^{-6}.
\end{align*}
Its \({\protect\hypertarget{mathscope.occurrence.666}{\protect\hyperlink{mathscope.symbol.a8c0b3ceba5c59dd}{A}}\protect\zlabel{mathscope.occurrence.666}}^{6}\)-coefficient is \(2\), a contradiction.

\end{proof}
\section*{Acknowledgments}

I thank Ramanujan Santharoubane, Terry Gannon, and Harshit Yadav for helpful
discussions. I also thank OpenAI's Codex for assistance with literature
organization, proofreading and editing, and reproducibility checks. I
gratefully acknowledge that this research was supported in part by
the Pacific Institute for the Mathematical Sciences. This work was supported by
a grant from the Simons Foundation International
[SFI-MPS-T-Institutes-00020822, OY].
\hypersetup{urlcolor=InternalLink}
\printbibliography

@article{BullockPrzytycki2000,
  author       = {Bullock, Doug and Przytycki, J{\'o}zef H.},
  title        = {Multiplicative Structure of {Kauffman} Bracket Skein
                  Module Quantizations},
  journaltitle = {Proceedings of the American Mathematical Society},
  volume       = {128},
  number       = {3},
  year         = {2000},
  pages        = {923--931},
  doi          = {10.1090/S0002-9939-99-05043-1},
  eprint       = {math/9902117},
  eprinttype   = {arXiv},
  url          = {https://pubs.ams.org/journals/proc/2000-128-03/S0002-9939-99-05043-1}
}

@article{PrzytyckiSikora2000,
  author       = {Przytycki, J{\'o}zef H. and Sikora, Adam S.},
  title        = {On Skein Algebras and \(SL_2(\mathbb C)\)-Character
                  Varieties},
  journaltitle = {Topology},
  volume       = {39},
  number       = {1},
  year         = {2000},
  pages        = {115--148},
  doi          = {10.1016/S0040-9383(98)00062-7},
  eprint       = {q-alg/9705011},
  eprinttype   = {arXiv},
  url          = {https://www.sciencedirect.com/science/article/pii/S0040938398000627}
}

@article{BullockFrohmanKaniaBartoszynska1999,
  author       = {Bullock, Doug and Frohman, Charles and
                  Kania-Bartoszy{\'n}ska, Joanna},
  title        = {Understanding the {Kauffman} Bracket Skein Module},
  journaltitle = {Journal of Knot Theory and Its Ramifications},
  volume       = {8},
  number       = {3},
  year         = {1999},
  pages        = {265--277},
  doi          = {10.1142/S0218216599000183},
  eprint       = {q-alg/9604013},
  eprinttype   = {arXiv},
  url          = {https://www.worldscientific.com/doi/10.1142/S0218216599000183}
}

@online{ChasSullivan1999,
  author     = {Chas, Moira and Sullivan, Dennis},
  title      = {String Topology},
  year       = {1999},
  eprint     = {math/9911159},
  eprinttype = {arXiv},
  url        = {https://arxiv.org/abs/math/9911159}
}

@online{Vaintrob2007,
  author     = {Vaintrob, Dmitry},
  title      = {The String Topology {BV} Algebra, Hochschild Cohomology and
                the Goldman Bracket on Surfaces},
  year       = {2007},
  eprint     = {math/0702859},
  eprinttype = {arXiv},
  url        = {https://arxiv.org/abs/math/0702859}
}

@article{FrohmanGelca2000,
  author       = {Frohman, Charles and Gelca, Razvan},
  title        = {Skein Modules and the Noncommutative Torus},
  journaltitle = {Transactions of the American Mathematical Society},
  volume       = {352},
  number       = {10},
  year         = {2000},
  pages        = {4877--4888},
  doi          = {10.1090/S0002-9947-00-02512-5},
  eprint       = {math/9806107},
  eprinttype   = {arXiv},
  url          = {https://pubs.ams.org/journals/tran/2000-352-10/S0002-9947-00-02512-5}
}

@article{Cooke2023Excision,
  author       = {Cooke, Juliet},
  title        = {Excision of Skein Categories and Factorisation Homology},
  journaltitle = {Advances in Mathematics},
  volume       = {414},
  year         = {2023},
  eid          = {108848},
  doi          = {10.1016/j.aim.2022.108848},
  eprint       = {1910.02630},
  eprinttype   = {arXiv},
  url          = {https://www.sciencedirect.com/science/article/pii/S000187082200665X}
}

@article{CookeSamuelson2021,
  author       = {Cooke, Juliet and Samuelson, Peter},
  title        = {On the Genus Two Skein Algebra},
  journaltitle = {Journal of the London Mathematical Society},
  volume       = {104},
  number       = {5},
  year         = {2021},
  pages        = {2260--2298},
  doi          = {10.1112/jlms.12497},
  eprint       = {2008.12695},
  eprinttype   = {arXiv},
  url          = {https://londmathsoc.onlinelibrary.wiley.com/doi/10.1112/jlms.12497}
}

@article{CookeLacabanne2026,
  author       = {Cooke, Juliet and Lacabanne, Abel},
  title        = {Higher Rank {Askey--Wilson} Algebras as Skein Algebras},
  journaltitle = {Annales de l'Institut Fourier},
  volume       = {76},
  number       = {3},
  year         = {2026},
  pages        = {1055--1117},
  doi          = {10.5802/aif.3729},
  eprint       = {2205.04414},
  eprinttype   = {arXiv},
  url          = {https://aif.centre-mersenne.org/articles/10.5802/aif.3729/}
}

@online{Chen2024,
  author     = {Chen, Haimiao},
  title      = {On the Structure of {Kauffman} Bracket Skein Algebra of a
                Surface},
  year       = {2024},
  eprint     = {2406.02299},
  eprinttype = {arXiv},
  url        = {https://arxiv.org/abs/2406.02299}
}

@article{ArthamonovShakirov2019,
  author       = {Arthamonov, Semeon and Shakirov, Shamil},
  title        = {Genus Two Generalization of \(A_1\) Spherical {DAHA}},
  journaltitle = {Selecta Mathematica. New Series},
  volume       = {25},
  number       = {2},
  year         = {2019},
  eid          = {17},
  doi          = {10.1007/s00029-019-0447-1},
  eprint       = {1704.02947},
  eprinttype   = {arXiv},
  url          = {https://link.springer.com/article/10.1007/s00029-019-0447-1}
}

@article{Arthamonov2025,
  author       = {Arthamonov, Semeon},
  title        = {Classical Limit of Genus Two {DAHA}},
  journaltitle = {Selecta Mathematica. New Series},
  volume       = {31},
  number       = {1},
  year         = {2025},
  eid          = {19},
  doi          = {10.1007/s00029-024-01009-2},
  eprint       = {2309.01011},
  eprinttype   = {arXiv},
  url          = {https://link.springer.com/article/10.1007/s00029-024-01009-2}
}

@book{FarbMargalit2012,
  author    = {Farb, Benson and Margalit, Dan},
  title     = {A Primer on Mapping Class Groups},
  series    = {Princeton Mathematical Series},
  number    = {49},
  publisher = {Princeton University Press},
  location  = {Princeton, NJ},
  year      = {2012},
  isbn      = {978-0-691-14794-9},
  doi       = {10.23943/princeton/9780691147949.001.0001},
  url       = {https://academic.oup.com/princeton-scholarship-online/book/41605}
}

@article{Santharoubane2024,
  author       = {Santharoubane, Ramanujan},
  title        = {Algebraic Generators of the Skein Algebra of a Surface},
  journaltitle = {Algebraic \& Geometric Topology},
  volume       = {24},
  number       = {5},
  year         = {2024},
  pages        = {2571--2578},
  doi          = {10.2140/agt.2024.24.2571},
  url          = {https://msp.org/agt/2024/24-5/p05.xhtml}
}

@unpublished{SantharoubanePrivate2026,
  author = {Santharoubane, Ramanujan},
  title  = {Private Communication},
  date   = {2026-07},
  note   = {Email correspondence with Jin-Cheng Guu concerning the meaning
            of ``bi-ideal''}
}

@book{AskeyWilson1985,
  author    = {Askey, Richard and Wilson, James},
  title     = {Some Basic Hypergeometric Orthogonal Polynomials That
               Generalize {Jacobi} Polynomials},
  series    = {Memoirs of the American Mathematical Society},
  volume    = {54},
  number    = {319},
  publisher = {American Mathematical Society},
  year      = {1985},
  pagetotal = {55},
  isbn      = {978-1-4704-0732-2},
  doi       = {10.1090/memo/0319},
  url       = {https://pubs.ams.org/ebooks/memo/0319}
}

@book{KoekoekLeskySwarttouw2010,
  author    = {Koekoek, Roelof and Lesky, Peter A. and Swarttouw, Ren\'e F.},
  title     = {Hypergeometric Orthogonal Polynomials and Their
               \(q\)-Analogues},
  series    = {Springer Monographs in Mathematics},
  publisher = {Springer},
  location  = {Berlin},
  year      = {2010},
  pagetotal = {578},
  isbn      = {978-3-642-05013-8},
  doi       = {10.1007/978-3-642-05014-5},
  url       = {https://link.springer.com/book/10.1007/978-3-642-05014-5}
}

@article{CraneKauffmanYetter1997,
  author       = {Crane, Louis and Kauffman, Louis H. and Yetter, David N.},
  title        = {State-Sum Invariants of 4-Manifolds},
  journaltitle = {Journal of Knot Theory and Its Ramifications},
  volume       = {6},
  number       = {2},
  year         = {1997},
  pages        = {177--234},
  doi          = {10.1142/S0218216597000145},
  eprint       = {hep-th/9409167},
  eprinttype   = {arXiv},
  url          = {https://www.worldscientific.com/doi/10.1142/S0218216597000145}
}

@thesis{Tham2021,
  author      = {Tham, Ying Hong},
  title       = {On the Category of Boundary Values in the Extended
                 {Crane--Yetter} {TQFT}},
  type        = {PhD thesis},
  institution = {Stony Brook University},
  year        = {2021},
  eprint      = {2108.13467},
  eprinttype  = {arXiv},
  url         = {https://www.math.stonybrook.edu/alumni/2021-Ying-Hong-Tham.pdf}
}

@article{EtingofKirillov1994,
  author       = {Etingof, Pavel I. and Kirillov, Jr., Alexander},
  title        = {Macdonald's Polynomials and Representations of Quantum
                  Groups},
  journaltitle = {Mathematical Research Letters},
  volume       = {1},
  number       = {3},
  year         = {1994},
  pages        = {279--296},
  doi          = {10.4310/MRL.1994.v1.n3.a1},
  eprint       = {hep-th/9312103},
  eprinttype   = {arXiv},
  url          = {https://link.intlpress.com/JDetail/1806607748747915265}
}

@article{Cherednik1995,
  author       = {Cherednik, Ivan},
  title        = {Double Affine {Hecke} Algebras and {Macdonald}'s
                  Conjectures},
  journaltitle = {Annals of Mathematics},
  volume       = {141},
  number       = {1},
  year         = {1995},
  pages        = {191--216},
  doi          = {10.2307/2118632},
  url          = {https://annals.math.princeton.edu/1995/141-1/p07}
}

@article{Hikami2019,
  author       = {Hikami, Kazuhiro},
  title        = {{DAHA} and Skein Algebra of Surfaces: Double-Torus Knots},
  journaltitle = {Letters in Mathematical Physics},
  volume       = {109},
  number       = {10},
  year         = {2019},
  pages        = {2305--2358},
  doi          = {10.1007/s11005-019-01189-5},
  eprint       = {1901.02743},
  eprinttype   = {arXiv},
  url          = {https://link.springer.com/article/10.1007/s11005-019-01189-5}
}

@article{Samuelson2019,
  author       = {Samuelson, Peter},
  title        = {Iterated Torus Knots and Double Affine {Hecke} Algebras},
  journaltitle = {International Mathematics Research Notices},
  volume       = {2019},
  number       = {9},
  year         = {2019},
  pages        = {2848--2893},
  doi          = {10.1093/imrn/rnx198},
  eprint       = {1408.0483},
  eprinttype   = {arXiv},
  url          = {https://academic.oup.com/imrn/article/2019/9/2848/4103252}
}

@incollection{AganagicShakirov2012,
  author       = {Aganagic, Mina and Shakirov, Shamil},
  title        = {Refined {Chern--Simons} Theory and Knot Homology},
  booktitle    = {String-Math 2011},
  series       = {Proceedings of Symposia in Pure Mathematics},
  volume       = {85},
  publisher    = {American Mathematical Society},
  location     = {Providence, RI},
  year         = {2012},
  pages        = {3--31},
  doi          = {10.1090/pspum/085/1372},
  eprint       = {1202.2489},
  eprinttype   = {arXiv},
  url          = {https://bookstore.ams.org/PSPUM/85}
}
\end{document}